\documentclass[11pt]{article}

\usepackage{hyperref}
\usepackage{a4wide}
\usepackage{graphicx}
\usepackage{amsmath}
\usepackage{amssymb}
\usepackage{amsthm}
\usepackage{enumerate}

\usepackage{combelow}
\usepackage{mathdots}

\newtheorem{theorem}{Theorem}[section]
\newtheorem{definition}[theorem]{Definition}

\newtheorem{lemma}[theorem]{Lemma}
\newtheorem{claim}[theorem]{Claim}

\newtheorem{conjecture}[theorem]{Conjecture}

\newtheorem{problem}[theorem]{Problem}
\newtheorem{observation}[theorem]{Observation}
\newtheorem{exercise}[theorem]{Exercise}

\newcommand{\lam}{{\lambda}}
\newcommand*{\symdiff}{\bigtriangleup}
\newcommand{\DGM}{D_{G,M}}
\newcommand{\MAX}{\mathrm{max}}
\newcommand{\twr}{\mathrm{twr}}

\begin{document}

\title{An approximate version of a conjecture of Aharoni and Berger}

\author{{Alexey Pokrovskiy \thanks{Department of Mathematics, ETH, 8092 Zurich, Switzerland. {\tt dr.alexey.pokrovskiy@gmail.com}. Research supported in part by SNSF grant SNSF grant 200021-175573 and the Methods for Discrete Structures, Berlin graduate school (GRK 1408).}}\\ \\
 ETH Z\"urich.}

\maketitle

\begin{abstract}
Aharoni and Berger conjectured that in every proper edge-colouring of a bipartite multigraph by $n$ colours with at least $n+1$ edges of each colour there is a rainbow matching using every colour. This conjecture generalizes a longstanding problem of Brualdi and Stein about transversals in Latin squares.
Here an approximate version of the Aharoni-Berger Conjecture is proved---it is shown that if there are at least $n+o(n)$ edges of each colour in a proper $n$-edge-colouring of a bipartite multigraph then there is a rainbow matching using every colour.
\end{abstract}
\vspace{1cm}
\tableofcontents
\newpage
\section{Introduction}
The research in this  paper is motivated by some old problems about transversals in Latin squares.
Recall that a Latin square of order $n$ is an $n\times n$ array filled with $n$ different symbols, where no symbol appears in the same row or column more than once. 
 A transversal in a Latin square of order $n$ is a set of $n$ entries such that no two entries are in the same row, same column, or have the same symbol.
It is easy to see that not every Latin square has a transversal (for example the unique $2\times 2$ Latin square has no transversal.) However, it is possible that every Latin square contains a large \emph{partial transversal}. Here, a partial transversal of size $m$ means a  set of $m$ entries such that no two entries are in the same row, same column, or have the same symbol. The study of transversals in Latin squares goes back to Euler who  studied orthogonal Latin squares i.e. order $n$ Latin squares  which can be decomposed into $n$ disjoint transversals. For a survey of transversals in Latin squares, see~\cite{WanlessSurvey}.

There are several closely related, old, and difficult conjectures which say that Latin squares should have large partial transversals. The first of these is a conjecture of Ryser that every Latin square of odd order contains a transversal \cite{Ryser}.  Brualdi  conjectured that every Latin square contains a partial transversal of size $n-1$ (see \cite{Brualdi}.) Stein independently made the stronger conjecture that every $n\times n$ array filled with $n$ symbols, each appearing exactly $n$ times contains a partial transversal of size $n-1$ \cite{Stein}. Because of the similarity of the above two conjectures, the following is often referred to as ``the Brualdi-Stein Conjecture''.
\begin{conjecture}[Brualdi and Stein,~\cite{Brualdi, Stein}]\label{ConjectureBrualdiStein}
 Every $n\times n$ Latin square has a partial transversal of size $n-1$.
\end{conjecture}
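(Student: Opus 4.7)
The plan is to pass to the rainbow-matching reformulation of transversals, feed it into the main theorem announced in the abstract, and then attempt to close the remaining gap by absorption. An $n\times n$ Latin square $L$ is equivalent to a proper $n$-edge-colouring of $K_{n,n}$: take the rows and columns as the two vertex classes and colour the edge from row $i$ to column $j$ by the symbol $L(i,j)$. The Latin property forces the colouring to be proper and every colour class to be a perfect matching. Under this dictionary, partial transversals of $L$ are precisely rainbow matchings, so Conjecture~\ref{ConjectureBrualdiStein} asks for a rainbow matching of size $n-1$ in this highly structured coloured $K_{n,n}$.

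The first step is to extract an asymptotic version directly from the main theorem. Fix $\varepsilon>0$ and discard $\lceil\varepsilon n\rceil$ arbitrary colours; what remains is a proper $m$-edge-colouring with $m=n-\lceil\varepsilon n\rceil$ in which each surviving colour class still has $n=m+\lceil\varepsilon n\rceil$ edges. The hypothesis ``at least $m+o(m)$ edges per colour'' is comfortably satisfied for large $n$, so the main theorem delivers a rainbow matching using all $m$ remaining colours, i.e.\ a partial transversal of size $(1-\varepsilon)n$. Since $\varepsilon$ was arbitrary, this yields partial transversals of size $n-o(n)$ in every Latin square --- the asymptotic form of Brualdi--Stein that the paper effectively establishes.

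To push from $n-o(n)$ to $n-1$ I would try an absorption argument. Before invoking the main theorem, one would set aside a small \emph{absorber} $A$: a carefully chosen rainbow sub-structure of the coloured $K_{n,n}$ that can, via rainbow switchings, swallow any small leftover set of rows, columns and colours. One would then apply the main theorem to the Latin square with $A$'s vertices and colours removed, obtain a rainbow matching of size $(1-\varepsilon)n$ there, and finally combine it with $A$ to cover the $O(\varepsilon n)$ defects. The main obstacle is the construction of $A$: because every colour class is already a perfect matching, there is no spare capacity in any colour, so the rainbow switchings available inside $A$ are severely constrained and must be tailored to every possible leftover pattern simultaneously. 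A more modest fallback would be to sharpen the switching arguments underlying the main theorem so that its error term drops from $o(n)$ to $O(\log n)$ or even $O(1)$; this would already be a substantial step toward Conjecture~\ref{ConjectureBrualdiStein}, but removing the final constant gap appears to require a genuinely new structural insight into how Latin-square rigidity interacts with rainbow augmentation, and is precisely the reason the conjecture has resisted proof for over fifty years.
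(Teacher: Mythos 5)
This statement is Conjecture~\ref{ConjectureBrualdiStein}, which the paper states \emph{as a conjecture} and does not prove; there is no proof in the paper to compare against. Your write-up is, to your credit, honest about this: it correctly translates the problem into the rainbow-matching language of the paper and correctly observes that Theorem~\ref{MainTheorem} yields partial transversals of size $n-o(n)$ (discard $\varepsilon n$ colours, leaving $m=(1-\varepsilon)n$ colours each with $n\ge (1+\varepsilon')m$ edges, then apply the theorem). But you then sketch an absorption step that you yourself acknowledge you cannot carry out, which means no proof is actually being offered.

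Two substantive remarks on the sketch. First, the $n-o(n)$ bound you extract from Theorem~\ref{MainTheorem} is far weaker than what was already known long before this paper: partial transversals of size $n-\sqrt{n}$ follow from older arguments (cited in the concluding remarks as~\cite{Barat, Woolbright}), and the current record is $n-O(\log^2 n)$ due to Hatami and Shor~\cite{HatamiSchor}. So deducing $n-o(n)$ from the main theorem buys nothing new for Latin squares specifically; the value of Theorem~\ref{MainTheorem} lies in the generality of the Aharoni--Berger setting, not in improving the Brualdi--Stein error term. Second, the absorption obstacle you identify is genuine and well recognised: in a Latin square every colour class is a perfect matching with zero slack, so the usual absorber constructions that rely on spare edges in each colour do not apply. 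Concluding ``a genuinely new structural insight'' is required is a fair summary of the state of the art, but it is a statement that the problem is open, not a proof. The correct assessment of your submission is therefore: the reformulation and the asymptotic deduction are fine, the absorption step is a non-proof, and the statement remains a conjecture in the paper and in the literature.
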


In this paper we will study a generalization of the Brualdi-Stein Conjecture to the setting of rainbow matchings in properly coloured bipartite multigraphs. How are these related? There is a one-to-one correspondence between $n\times n$ Latin squares and proper edge colourings of $K_{n,n}$ with $n$ colours. 
Indeed consider a Latin square $S$ whose set of symbols is $\{1, \dots, n\}$ with the $i,j$ symbol $S_{i,j}$. To $S$ we  associate 
 an edge-colouring of $K_{n,n}$ with the colours $\{1, \dots, n\}$, by setting $V(K_{n,n})=\{x_1, \dots, x_n, y_1, \dots, y_n\}$ and letting the edge between $x_i$  and  $y_j$ receive colour $S_{i,j}$. Notice that this colouring is proper i.e. adjacent edges receive different colours.
Recall that a \emph{matching} in a graph is a set of disjoint edges. We call a matching \emph{rainbow} if all of its edges have different colours. It is easy to see that partial transversals in the Latin square $S$ correspond to  rainbow matchings in the corresponding coloured $K_{n,n}$. Thus the Brualdi-Stein Conjecture is equivalent to the statement that ``in any proper $n$-edge-colouring of $K_{n,n}$, there is a rainbow matching of size $n-1$.'' 
Once the conjecture is phrased in this form, one begins to wonder whether large rainbow matchings should exist in more general coloured graphs.
Aharoni and Berger made the following generalization of the Brualdi-Stein Conjecture.
\begin{conjecture}[Aharoni and Berger,~\cite{AharoniBerger}]\label{AharoniBergerConjecture}
 Let $G$  be a properly edge-coloured bipartite multigraph with $n$ colours having at least $n+1$ edges of each colour. Then $G$ has a  rainbow matching using every colour. 
\end{conjecture}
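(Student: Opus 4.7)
The plan is to attack the conjecture by extremal analysis of a minimum counterexample. Take a properly edge-coloured bipartite multigraph $G$ on $n$ colours with $n$ minimal among counterexamples, let $M$ be a maximum rainbow matching in $G$, and assume $|M| \le n-1$ so some colour $c^*$ is absent from $M$. Maximality of $M$ forces every edge of colour $c^*$ to have both endpoints in $V(M)$ (else $M$ extends), so at least $n+1$ edges of colour $c^*$ are packed on at most $2n-2$ vertices of $M$. This density, combined with having at least $n+1$ edges in each of the other $n-1$ colours, is the leverage that the proof must exploit.

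The main engine I would use is alternating-chain augmentation. Pick an edge $e_0$ of colour $c^*$ with both endpoints in $V(M)$; adding $e_0$ creates a conflict with one or two $M$-edges, say an $M$-edge $f_1$ of colour $c_1$. Remove $f_1$: the colour $c_1$ is now missing, and we search for another edge $e_1$ of colour $c_1$ that reinserts cleanly, possibly creating a new conflict with some $M$-edge $f_2$ of colour $c_2$, and so on. A chain that either exits $V(M)$ or closes into a mutually compatible set of swaps yields a rainbow matching of size $|M|+1$, contradicting maximality. I would organise such chains via an auxiliary digraph on the $n$ colours, whose arcs encode the possible swap moves, and search for the required alternating structure through a Hall-type / topological argument in the spirit of Aharoni and Haxell. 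The key combinatorial lemma I would aim for is that, with at least $n+1$ edges in each of $n$ colours concentrated on at most $2n-2$ relevant vertices, no colour can be ``locally trapped'': at some stage of any maximal chain an encountered colour must still possess an unused edge permitting the chain to be rerouted or terminated.

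The main obstacle will be termination together with the tightness of the hypothesis. In the approximate regime with $n+o(n)$ edges there is $\omega(1)$ slack per colour, which lets one absorb short alternating cycles via greedy or probabilistic arguments; with only $n+1$ edges per colour the slack is a single edge and every averaging argument sits on the boundary of failure. I expect that closing this gap requires a genuinely new structural insight---a classification of the local configurations in which augmentation can stall, together with a global argument showing that a minimum counterexample cannot simultaneously exhibit all such obstructions. This is precisely why Conjecture~\ref{AharoniBergerConjecture} remains open despite considerable attention, and my plan inherits that difficulty at the final, decisive step.
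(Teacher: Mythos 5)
There is a genuine gap, and it is the one you yourself flag at the end: what you have written is not a proof but a plan whose decisive step is missing. The statement you are addressing is Conjecture~\ref{AharoniBergerConjecture}, which is open; the paper does not prove it either, but only the approximate version (Theorem~\ref{MainTheorem}, requiring $(1+\epsilon)n$ edges of each colour and $n$ large). Your ``key combinatorial lemma'' --- that no colour can be locally trapped when $n$ colours each with $n+1$ edges are concentrated on at most $2n-2$ matched vertices --- is precisely the content of the conjecture in disguise, and nothing in your outline supplies an argument for it. The minimal-counterexample framing also buys nothing as stated, since you never use minimality of $n$ anywhere in the chain argument.

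It is worth noting that your machinery is essentially the paper's machinery: the auxiliary digraph on colours whose arcs encode swap moves is exactly $\DGM$, and your alternating chains are its switching paths (compare Claim~\ref{ClaimSwitchingPathExchangeEdges} and Lemma~\ref{LemmaSwitchingPathVertexAmidst}). But the paper's way of exploiting this structure is inherently asymptotic: it shows that vertices reachable from the missing colour have out-degree at least $\epsilon n$ above a baseline (Lemma~\ref{LemmaMaximumMatchingExpansion}), and then grows ``$\lam$-components'' whose sizes increase by $(\epsilon-o(1))n$ at each iteration (Lemma~\ref{ComponentGrowth}), reaching a contradiction after roughly $\epsilon^{-1}$ steps with tower-type dependence of $N_0$ on $\epsilon$. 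With only one spare edge per colour this degree surplus is $O(1)$, the component-growth increment vanishes, and the whole iteration collapses --- so the approach you sketch cannot, as it stands, be pushed to the exact $n+1$ bound; indeed the paper's concluding remarks explicitly identify even a polynomial error term as out of reach of these methods. Your proposal is a reasonable restatement of the known strategy, but it does not constitute a proof of the conjecture.
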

This conjecture attracted a lot of attention since it was made. 
A most natural way of attacking it is to consider graphs which have substantially more than $n+1$ edges in each colour, and show that such graphs have a rainbow matching using every colour. For example an easy greedy argument shows that every properly edge-coloured bipartite multigraph with $n$ colours and at least $2n$ edges of each colour has a rainbow matching of size $n$. Indeed, if the largest matching $M$ in such a graph had size $\leq n-1$, then one of the $2n$ edges of the unused colour would be disjoint from $M$, and we could get a larger matching by adding it.
This simple bound has been successively improved by many authors.
Aharoni, Charbit, and Howard \cite{AharoniCharbitHoward} proved that matchings of size $\lfloor 7n/4\rfloor$ are sufficient to guarantee a rainbow matching of size $n$. Kotlar and Ziv \cite{KotlarZiv} improved this to $\lfloor 5n/3\rfloor$. 
The author proved that $\phi n+o(n)$ is sufficient, where $\phi\approx 1.618$ is the Golden Ratio~\cite{PokrovskiyRainbow1}.
 Clemens and Ehrenm\"uller \cite{DennisJulia}  showed that $3n/2+o(n)$ is sufficient. 
The best currently known bound is by   Aharoni, Kotlar, and Ziv \cite{aharoni2017representation} who showed that having $3n/2+1$ edges of each colour in an $n$-edge-coloured bipartite multigraph guarantees a rainbow matching of size  $n$.

Additionally, there are two results showing that just $(1+o(1))n$ edges in each colour are enough if we place additional assumptions on $G$. A special case of a theorem of Haggkvist and Johansson~\cite{HaggkvistJohansson} (proved by probabilistic methods) is that ``every bipartite graph consisting of $n$ edge-disjoint perfect matchings of size $n+o(n)$ edges has a rainbow matchings of size $n$''. The author showed that the assumption that the matchings are perfect can be removed i.e. every bipartite graph consisting of $n$ edge-disjoint matchings  of size  $n+o(n)$ edges has a rainbow matching  of size $n$~\cite{PokrovskiyRainbow1}.

The goal of this paper is to improve on all previous asymptotic results by showing that $(1+o(1))n$ edges are sufficient for all bipartite multigraphs.
\begin{theorem}\label{MainTheorem}
For all $\epsilon >0$, there exists an $N_0=N_0(\epsilon )$ such that the following holds.
Let $G$ be a properly coloured bipartite multigraph with $n\geq N_0$ colours and  at least $(1+\epsilon )n$ edges of each colour. Then $G$ contains a rainbow matching using every colour.
\end{theorem}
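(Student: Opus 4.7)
The plan is to build the rainbow matching in two stages: first find a rainbow matching using almost every color, then extend it by an iterative switching procedure to use every color, exploiting the $\epsilon n$ slack in the number of edges per color.

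For the first stage, I would construct a rainbow matching $M_0$ of size $n - o(n)$ by combining a greedy argument with the author's earlier result on rainbow matchings in unions of large edge-disjoint matchings. Since the color classes in our proper edge-coloring are themselves edge-disjoint matchings of size $\geq (1+\epsilon)n$, a rainbow matching of size $(1-\delta)n$ for any desired small $\delta$ should be obtainable either directly or by combining standard techniques with probabilistic deletion.

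For the second and main stage, I would extend $M_0$ one missing color at a time. Let $M$ be the current rainbow matching and $c$ a missing color; the subgraph $M \cup E_c$, where $E_c$ is the set of edges of color $c$, has maximum degree $2$ by the proper-coloring hypothesis and hence decomposes into paths and even cycles. An \emph{augmenting path} -- one alternating between $E_c$ and $M$ and ending at vertices outside $V(M)$ -- would immediately allow an extension. Failing that, I would attempt a \emph{rainbow switching}: delete an $M$-edge of some color $c'$, insert an edge of color $c$, and then search for a new edge of color $c'$ to reinstate elsewhere. This turns the problem of covering $c$ into the problem of covering $c'$, producing a chain of switchings that must eventually terminate in a strictly larger rainbow matching.

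The main obstacle is ensuring that these switching chains terminate and do not exhaust the supply of free edges for any color -- with only $\epsilon n$ slack, each augmentation must be very efficient. The standard remedy is the \emph{absorbing method}: before looking for the main rainbow matching, reserve a small absorbing substructure $A \subseteq E(G)$ with the property that for every small set $S \subseteq \{1,\dots,n\}$ of colors, $A$ contains a rainbow matching using precisely the colors in $S$. One would then cover all but a few colors with a rainbow matching in $G \setminus A$ and use $A$ to finish. The delicate point is constructing an absorber (or its analogue) that works with only $(1+\epsilon)n$ edges per color; previous absorption-based rainbow arguments typically required substantially more slack, so the innovation probably lies either in a new flexible absorber structure or in a switching lemma of the form ``for a typical small rainbow matching $M$ and typical missing color $c$, a very short switching sequence adds $c$ to $M$,'' which can then be iterated. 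This is where I expect the argument to be most subtle and where the $(1+\epsilon)n$ hypothesis is used in its full strength.
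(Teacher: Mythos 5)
Your proposal and the paper's actual proof diverge in a fundamental way, and your version has a genuine gap at the place you yourself flag as the most delicate.

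The paper does not take a two-stage ``almost-rainbow matching, then absorb'' approach. Instead, it starts from a maximum rainbow matching $M$ and shows by contradiction that $M$ must already use every colour. The machinery is an auxiliary edge-labelled \emph{directed} graph $\DGM$ whose vertices are the colours of $G$: an edge $u\to v$ labelled by $x$ or $c$ records a colour-$u$ edge of $G$ entering $m_v\cap Y$ from a vertex in $X_0$ or from the $X$-endpoint of the matching edge $m_c$. ``Rainbow switching'' is captured by \emph{switching paths} in $\DGM$, and the crucial step is not absorption but a degree/expansion bound (Lemma~\ref{LemmaMaximumMatchingExpansion}) derived from the maximality of $M$: if $A$ is a set of labels \emph{amidst} a missing colour and $v$, then $|N^+_A(v)|\geq |A|-|X_0|+\epsilon n-1$. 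The bulk of the paper (Theorem~\ref{TheoremDigraph}) then shows that no sufficiently large out-properly labelled digraph can satisfy this bound, via a bespoke connectivity theory (``reaching'', ``bypassing'', and $\lambda$-components) that grows a highly connected component $C_i$ by roughly $\epsilon n$ at each step until it would exceed $n$ vertices. This replaces the termination-of-switching-chains worry entirely; there is no absorber to construct.

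The gap in your plan is exactly the absorber. You want a set $A\subseteq E(G)$ such that for every small set $S$ of colours, $A$ contains a rainbow matching using precisely the colours in $S$ and disjoint from the vertices of the partially built matching. No such structure is constructed, and there is no apparent reason one should exist under the hypotheses: you have only a factor $(1+\epsilon)$ worth of slack, the graph is an arbitrary multigraph, and the colours you will need to absorb (and the vertices you will need to avoid) are not known in advance. As you acknowledge, prior absorption-based rainbow arguments require much more room, and in fact the only known results of this type in the $(1+o(1))n$ regime (Haggkvist--Johansson, and later Gao--Ramadurai--Wanless--Wormald) impose extra conditions on $G$ (perfect-matching colour classes, or bounded edge multiplicity). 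For general bipartite multigraphs the multiplicities can be arbitrarily large, which is exactly what breaks the simple-graph argument in~\cite{PokrovskiyRainbow1}: the auxiliary digraph is only out-properly labelled, and a vertex can be isolated by deleting one label (Figure~\ref{FigureK7}). Overcoming that obstruction, not building an absorber, is the paper's central innovation. Separately, your first stage is not free either: a naive greedy argument yields size roughly $(1+\epsilon)n/2$, and the $n-\sqrt n$ bound of Bar\'at--Woolbright is already a nontrivial theorem, so ``a rainbow matching of size $(1-\delta)n$'' should not be treated as a given.
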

The above theorem is the natural approximate version of Conjecture~\ref{AharoniBergerConjecture}. Now the interesting direction for further research is to try and improve the second order term.

This theorem is proved by associating an auxiliary directed graph with $G$ and studying certain kinds of paths in the directed graph. Such an approach was also taken in the author's previous paper~\cite{PokrovskiyRainbow1}, and is substantially refined here. In the next section we give an overview of the various components of the proof of Theorem~\ref{MainTheorem}.

\section{Proof sketch}
The proof of Theorem~\ref{MainTheorem} is quite long and complicated. 
The basic idea is to associate an auxiliary directed graph to $G$ and then study properties of this directed graph. 
The directed graph is studied by introducing five new concepts---``switching paths'', ``amidstness'', ``reaching'', ``bypassing'', and ``$\lam$-components''---and then proving many lemmas about these concepts. Since these concepts are quite foreign, we use this section to give a slow and detailed introduction to all of them. In particular we motivate some of these concepts by showing how they relate to the initial undirected graph in Theorem~\ref{MainTheorem}.

This section and the main proof of Theorem~\ref{MainTheorem} (Sections~\ref{SectionBipartite} and~\ref{SectionDirectedGraphs}) can be read completely independently of one another. All concepts that we introduce in this section, will be reintroduced during the main proof of Theorem~\ref{MainTheorem} (usually more concisely.)

\subsection{Associating a directed graph}
Let $G$ be a properly coloured bipartite multigraph as in Theorem~\ref{MainTheorem}, and let $M$ be a rainbow matching of maximum size in $G$. Suppose for the sake of contradiction that $M$ doesn't use every colour.
Aside from~\cite{HaggkvistJohansson}, all approaches to Conjecture~\ref{AharoniBergerConjecture} have involved performing local manipulations on $M$ to try and produce a larger rainbow matching. Here a ``local manipulation'' on $M$ means choosing some edge $m\in M$ and $e\not\in M$ such that $M-m+e$ is another rainbow matching of the same size as $M$. The basic idea of the proof is to perform a sequence of such local manipulations to obtain a new matching $M'$ of the same size as $M$ such that there is some edge which can be added to $M'$ to give a larger rainbow matching. Since $M$ was originally chosen to have maximum size, this gives a contradiction.

Thus the main aim throughout the proof is to find a suitable sequence of local manipulations. A key idea in~\cite{PokrovskiyRainbow1} was that such sequences correspond to paths in a suitable auxiliary directed graph. The following is the directed graph which we will use.
\begin{definition}[The directed graph $\DGM$]
Let $G$ have bipartition classes $X$ and $Y$, $C_G$ the set of colours in $G$, and $C_M$ the set of colours on $M$. Let $X_0=X\setminus V(M)$. For any colour $c\in C_M$, let $m_c$ be the colour $c$ edge of $M$.
The digraph $\DGM$ corresponding to $G$ and $M$ is defined as follows:
\begin{itemize}
\item The vertex set of $\DGM$ is the set $C_G$ of colours of edges in $G$.  
\item For two colours $u$ and $v\in V(\DGM)$ there is a directed edge from $u$ to $v$ in $\DGM$ whenever there is a colour $u$ edge from some $x\in X$ to the  vertex $m_v\cap Y$. 
\end{itemize}
\end{definition}

\begin{figure}[ht]
  \centering
    \includegraphics[width=0.8\textwidth]{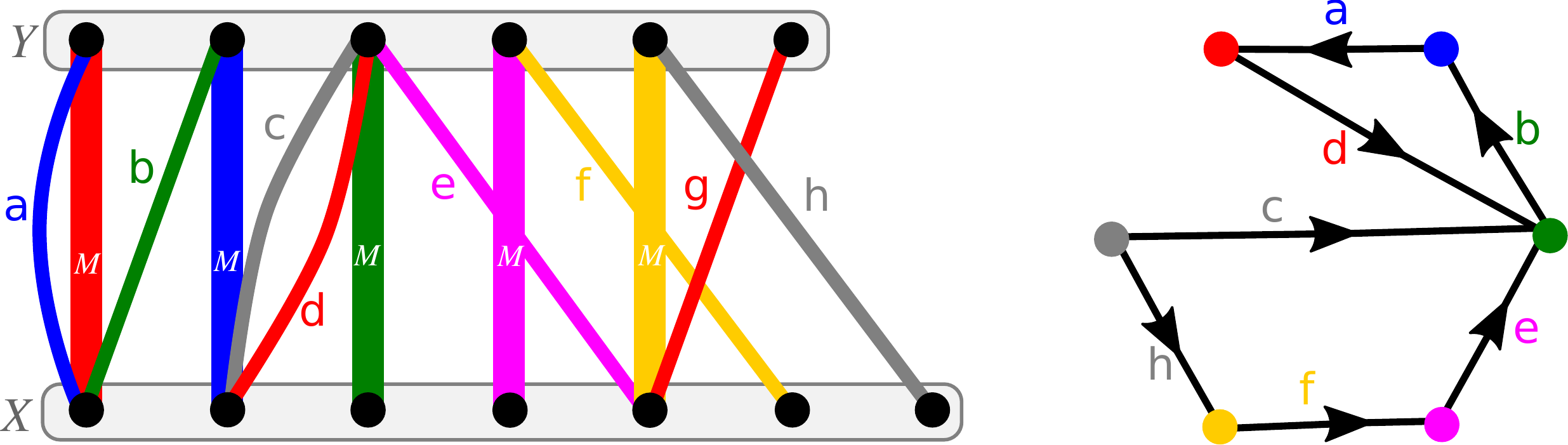}
  \caption{A graph $G$, with a matching $M$, and the corresponding directed  graph $\DGM$. The thick vertical edges labelled ``$M$" are the rainbow matching $M$. All other edges are denoted by $a$ -- $h$ to show which edge of $\DGM$ corresponds to which edge of $G$. Notice that the edge $g$ of $G$ doesn't have a corresponding edge in $\DGM$---this is because $g$ doesn't go through $Y\cap V(M)$.}\label{FigureDGMunlabelled}
\end{figure}

See Figure~\ref{FigureDGMunlabelled} for a diagram of a bipartite multigraph and the corresponding directed graph $\DGM$.
Consider the directed path  in the $\DGM$  with edge sequence $(h,f,e)$ and vertex sequence (grey, yellow, pink, green). Notice that deleting the yellow, pink, and green edges from $M$ and replacing them with $h$, $f$, and $e$ produces a new rainbow matching of the same size as $M$. In addition this new matching misses a different colour (green rather than grey.) This demonstrates that directed paths in $\DGM$ can give the kinds of local manipulations we are interested in.

However not all directed paths in $\DGM$ correspond to sequences of local manipulations. For example in Figure~\ref{FigureDGMunlabelled}, the directed path $c,b,a$ doesn't work since the three edges $c,b,a$ in $G$ do not form a matching. In fact it is easy to check that the only directed paths in Figure~\ref{FigureDGMunlabelled} which correspond to the kinds of manipulations we're interested in are sub-paths of $(h,f,e)$.

The previous paragraphs show that while paths in $\DGM$ can capture the kind of local manipulations we're looking for, not all paths do so. 
We will add labels to the edges of $\DGM$ in order to be able to describe exactly the kind of paths we're interested in. The set of labels  for edges of $\DGM$ is $X_0\cup C_M$ (where $X_0=X\setminus V(M)$ and $C_M$ is the set of colours of the edges of $M$.) If there is a colour $u$ edge in $G$ from $x\in X$ to $m_v\cap Y$, then we label the corresponding edge $uv\in \DGM$ by the following rule.
\begin{itemize}
\item If $x\in X_0$ then the edge $uv$ is labelled by $x$.
\item If $x\in m_c\in M$ then $uv$ is labelled by $c$, the colour of $m_c$.
\end{itemize}
\begin{figure}[ht]
  \centering
    \includegraphics[width=0.8\textwidth]{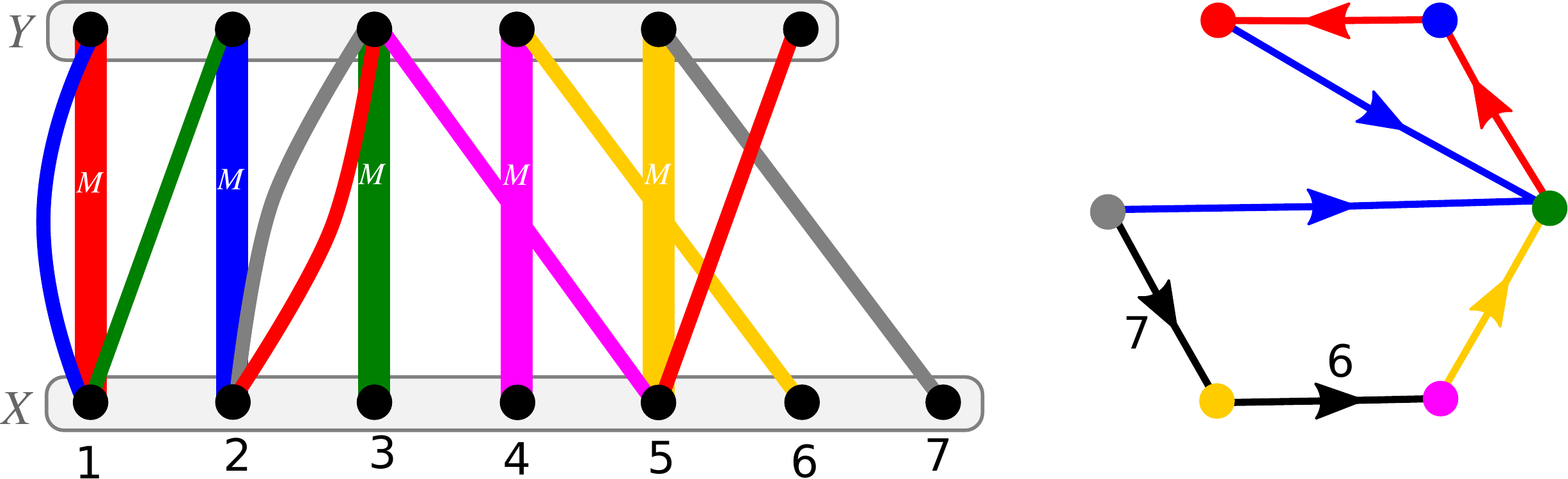}
  \caption{The same graphs $G$ and $\DGM$ as in Figure~\ref{FigureDGMunlabelled}, but now with the edge-labels on $\DGM$. The coloured edges in $\DGM$ are ones labelled by elements of $C_M$ (or equivalently the ones labelled by something in $V(\DGM)$.) The black edges are ones labelled by elements of $X_0$ (or equivalently ones labelled by something not in $V(\DGM)$.)}\label{FigureDGMlabelled}
\end{figure}
See Figure~\ref{FigureDGMlabelled} for an example of this labelling. 
One key point to notice is that the set of labels $X_0\cup C_M$ is not just an ambient set---since $V(\DGM)=C_G$ an element of $C_M$ can simultaneously be a vertex of $\DGM$ and a label of edges in $\DGM$. 
Formally, an edge-labelled  directed graph is defined to be a directed graph $D$ together with a set $X_0$ with $X_0\cap V(D)=\emptyset$ and a labelling function $f:E(D)\to V(D)\cup X_0$.
The set $X_0$ is called the set of \emph{non-vertex labels} in $D$. We call $X_0\cup V(D)$ the \emph{set of labels} in $D$ (regardless of whether $D$ actually has edges labelled by all elements of $X_0\cup V(D)$).

Having equipped $\DGM$ with a labelling, we can define the kinds of paths we are interested in.
\begin{definition}[Switching path]\label{Definition_Switching_Path_ProofSketch}
A path $P=(p_0, \dots, p_d)$ in an edge-labelled, directed graph $D$ is a switching path if the following hold.
\begin{itemize}
\item $P$ is rainbow  i.e. the edges of $P$ have different labels.
\item If $p_{i}p_{i+1}$ is labelled by a vertex $v\in V(D)$, then $v=p_j$ for some $1\leq j\leq i$.
\end{itemize}
\end{definition}
\begin{figure}[ht]
  \centering
    \includegraphics[width=0.8\textwidth]{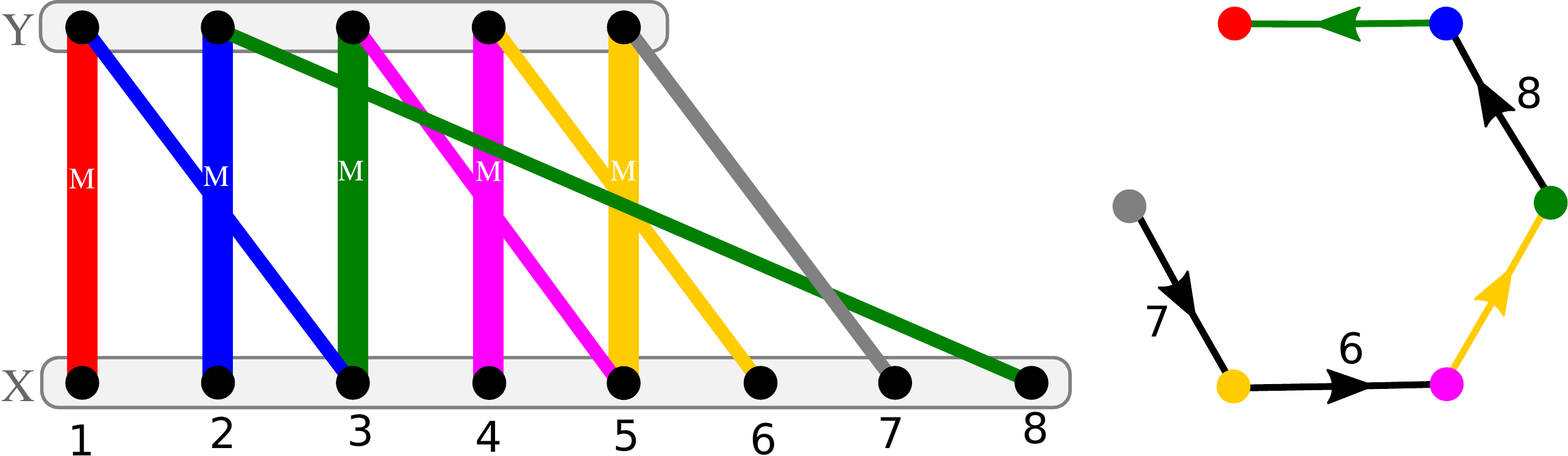}
  \caption{A switching path in a graph $\DGM$. Replacing the edges of $M$ for the other pictured edges of $G$ produces a new matching of the same size as $M$.}\label{FigureSwitchingPath}
\end{figure}
In other words a switching path is a rainbow path with a kind of ``consistency'' property for its edge-labels which are vertices: For every edge $e\in P$ which is labelled by a vertex $v$, $P$ must pass through $v$ before it reaches $e$. Notice that this vertex $v$ is not allowed to be $p_0$, the starting vertex of $P$. A consequence of this is that the first edge $p_0p_1$ of $P$ cannot be labelled by a vertex of $D$ (in the case of $\DGM$ this means that the first edge of any switching path must be labelled by something in $X_0$).

See Figure~\ref{FigureSwitchingPath} for an example of a switching path.
Notice that this path does correspond to the kinds of local manipulations of $M$ which we are interested in i.e if we exchange the edges of $M$ for the edges in $G$ corresponding to the switching path, then we obtain a new rainbow matching of the same size as $M$. 

When looking at a switching path in the graph is $\DGM$, the vertices of $P$ correspond to edges of $G$ which we want to remove from the matching $M$, and the edges of $P$ correspond to edges of $G$ which we want to add to $M$. The two conditions in the definition of ``switching path'' then have natural interpretations when one seeks to obtain a new rainbow matching by switching the edges along $P$. Asking for the switching path to be rainbow is equivalent asking for the edges we want to add to $M$ not intersecting in $X$ (which is needed to get a matching). The second part of  Definition~\ref{Definition_Switching_Path_ProofSketch} ensures that when we add an edge to $M$, its colour was previously removed from $M$.

The following exercise makes precise how to modify a matching $M$ using a switching path in $\DGM$ starting from a colour outside $M$.
\begin{exercise}\label{ExerciseSwitchingPathExchangeEdges}
Let $M$ be a rainbow matching in a graph $G$, $p_0$ a colour not in $M$, and $P=(p_0,p_1,\dots,p_d)$ a switching path in $\DGM$.
For $i\geq 1$, let $m_i$ be the colour $p_i$ edge of $M$, and for $i\geq 0$, let $e_i$ be the edge of $G$ corresponding to $p_{i}p_{i+1}$. Show that the following is  a rainbow matching missing the colour $p_d$:
$$M+e_0-m_1+e_1 \dots-m_{d-1}+e_{d-1} - m_d.$$
\end{exercise}
For a solution to the above exercise, see Claim~\ref{ClaimSwitchingPathExchangeEdges}. Exercise~\ref{ExerciseSwitchingPathExchangeEdges} is exactly what we use to try and extend $M$ into a larger matching. If $M$ was chosen to be maximum, then Exercise~\ref{ExerciseSwitchingPathExchangeEdges} can be used to show that $\DGM$ possesses a certain degree property. This and other properties of $\DGM$ will be discussed in the next section.

\subsection{Properties of the directed graph}
The labelled directed graph $\DGM$ ends up having several properties which we use in the proof of Theorem~\ref{MainTheorem}. In this section we go through the properties which we need.
See Figure~\ref{FigureDGMColouring} for  examples of some of the features that $\DGM$ can have.
\begin{figure}[ht]
  \centering
    \includegraphics[width=0.8\textwidth]{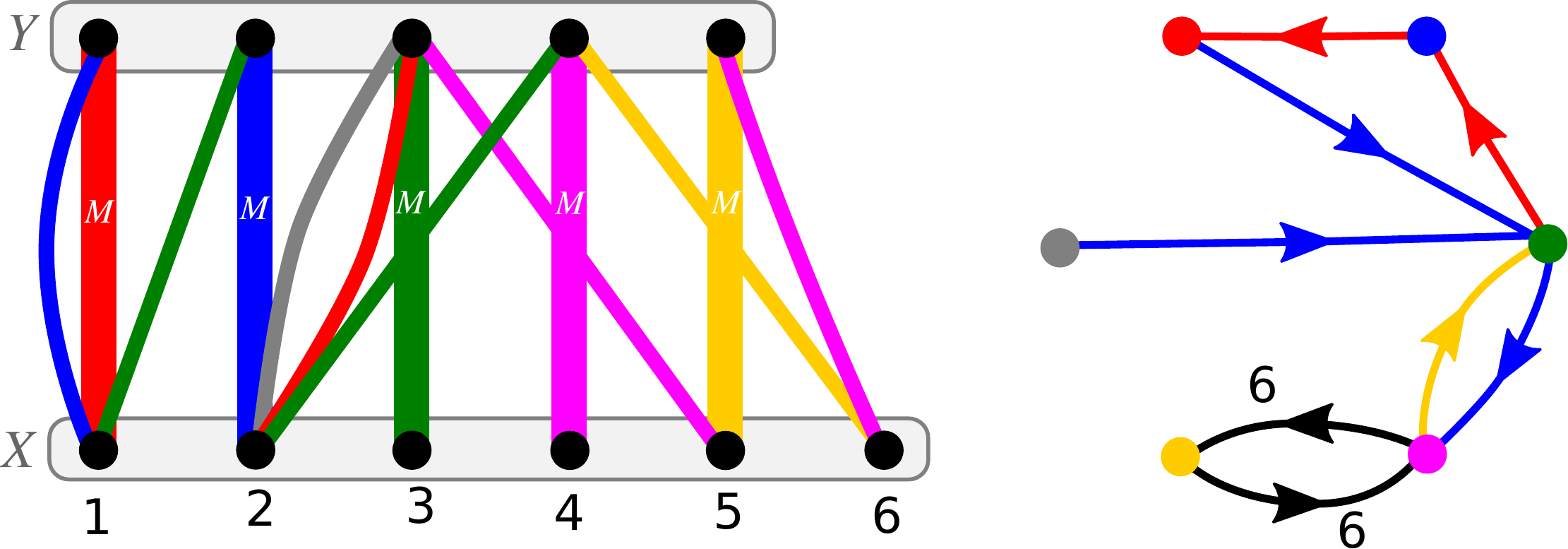}
  \caption{Some of the features $\DGM$ has. The directed graph $\DGM$ doesn't have multiple edges, unless they go in different directions (like the two edges labelled $6$). At a vertex $v$, $\DGM$ never has out-going edges with the same label, but it may have in-going edges with the same label (For example the green vertex has two in-going blue edges.) }\label{FigureDGMColouring}
\end{figure}

For two vertices $u,v\in DGM$ it is possible for $uv$ and $vu$ to both be present in $\DGM$. For example the two edges between the green and pink vertices in Figure~\ref{FigureDGMColouring}. However it is impossible for the edge $uv$ to appear twice with different labels i.e. the directed graph $\DGM$ is simple.
\begin{exercise}\label{ExerciseSimple}
Using the fact that $G$ is properly coloured, show that for $u,v \in V(\DGM)$, there is at most one edge from $u$ to $v$ in $\DGM$.
\end{exercise}
For a solution to this exercise, see Lemma~\ref{LemmaProperlyColoured}.
The labelling on the directed graph $\DGM$ is far from a general labelling. We make the following definitions which generalize proper colouring to directed graphs.
\begin{definition}
Let $D$ be a labelled directed graph.
\begin{itemize}
\item $D$ is out-properly labelled if for any $u\in V(D)$, all out-going edges $uv$ have different labels.
\item $D$ is in-properly labelled if for any $u\in V(D)$, all in-going edges $vu$ have different labels.
\end{itemize}
\end{definition}

It turns out that the labelling on $\DGM$ is always out-proper.
\begin{exercise}\label{ExerciseOutProperlylabelled}
Using the fact that $G$ is properly coloured, show that $\DGM$ is out-properly labelled.
\end{exercise}
For a solution to this exercise, see Lemma~\ref{LemmaProperlyColoured}.
The labelling on $\DGM$ is not always in-proper. For example, in  Figure~\ref{FigureDGMColouring}, the green vertex has two in-going blue edges.
 Notice that in Figure~\ref{FigureDGMColouring} this happened because of the multiple edge in $G$. It turns out that this is the only way to have in-going edges with the same label in $\DGM$.
\begin{exercise}
Suppose that $G$ is properly coloured, simple, and $M$ is a matching in $G$.  Show that $\DGM$ is in-properly labelled.
\end{exercise}

Recall that the special case of Theorem~\ref{MainTheorem} when $G$ is simple was proved in the author's earlier paper~\cite{PokrovskiyRainbow1}. The  case when $G$ is simple turns out to be much easier to prove precisely because the directed graph $\DGM$ associated to $G$ is both in-properly and out-properly labelled. The reason for the difficulty of the multigraph case is that dense directed graphs which are not in-properly labelled do not necessarily have certain connectivity properties. This difficulty is explained in more detail in Section~\ref{SectionProofSketchConnectedness}.

The other main property  of $\DGM$ which we will need is a degree property i.e. we will want to know that all vertices in $\DGM$ have a suitably large degree.
Let $Y_0=Y\setminus S$ be the set of vertices in $Y$ disjoint from the matching $M$.
From the definition of $\DGM$, notice that every edge $e\in G$ corresponds to an edge of $\DGM$ unless $e$ passes through $Y_0$ or $e \in M$ \footnote{The edges of $M$ could be naturally thought of corresponding to loops in $\DGM$, but to keep our analysis to loopless graphs, we won't do this.}. 
A consequence of this is that $e(\DGM)=e(G)-|M|-e(X,Y_0)$. 
Recall that every colour $c$ in $G$ has $(1+\epsilon)n$ edges. For a colour $c$, let $c_{Y_0}$ be the number of colour $c$ edges going through $Y_0$.
From the definition of $\DGM$ we have 
\begin{align}
|N^+(c)|&=(1+\epsilon)n-c_{Y_0}-1 \text{ if $M$ has a colour $c$ edge}, \label{EqIntroExpansion1}\\  
|N^+(c)|&=(1+\epsilon)n-c_{Y_0} \text{  if $M$ has no colour $c$ edge} \label{EqIntroExpansion2}.
\end{align}
Here $N^+(c)$ denotes the out-neighbourhood of $c$ i.e. the set of $x\in V(\DGM)$ with $cx\in E(\DGM)$.
Notice that (\ref{EqIntroExpansion1}) and~(\ref{EqIntroExpansion2}) do not by themselves imply that $|N^+(c)|$ is large for any colour $c$. It is possible that most of the edges of $G$ go through $Y_0$, making the $c_{Y_0}$ term dominant in (\ref{EqIntroExpansion1}) and~(\ref{EqIntroExpansion2}).
However the fact that $M$ is a \emph{maximum size} rainbow matching does force some colours in $G$ to have a large out-degree in $\DGM$. In particular if $c_0$ is a colour which does not appear on $M$, then notice that there cannot be any edges in $G$ between $X_0$ and $Y_0$---indeed if such an edge existed then it could be added to $M$ to give a rainbow matching larger than $M$. 
Recall that from the assumption of Theorem~\ref{MainTheorem} there are $\geq (1+\epsilon)n$ colour $c_0$ edges in $G$, and at most $|M|\leq n$ of these can intersect $X\cap V(M)$.
The other $\epsilon n$ colour $c_0$ edges must go between $X_0$ and $Y\cap V(M)$, giving $|N^+(c_0)|\geq \epsilon n$. 

The above discussion shows that all colours not on $M$ have a high out-degree in $\DGM$. 
Can we get something similar for the other colours in $G$? Recall from Exercise~\ref{ExerciseSwitchingPathExchangeEdges} that switching paths can be used to give new rainbow matchings with the same size as $M$. Using this it is easy to show that any colour close to  $c_0\not\in M$ in $\DGM$ has a large degree in $\DGM$ as well.
\begin{exercise}\label{ExerciseHighX0Degree}
Let $c_0$ be a colour not on $M$, and $c$ some other colour. Let $P$ be a switching path from $c_0$ to $c$ in $\DGM$. Then $|N^+(c)|\geq \epsilon n -|P|$.
\end{exercise}
The above exercise is a special case of Lemma~\ref{LemmaMaximumMatchingExpansion} which we prove later. So far we have looked at only edges labelled by $X_0$ and found that vertices close to missing colours have many such edges leaving them. For a set of labels $L$, define $N^+_{L}(v)$ to be the set of $x\in N(v)$ with $vx$ labelled by some $\ell \in L$. Under the assumptions of Exercise~\ref{ExerciseHighX0Degree}, it is easy to show that $|N^+_{X_0}(c)|\geq \epsilon n -|P|$.

We would like to have information about how big $N^+_L$ is for sets of labels $L$ other than $X_0$. Where could we get such information? In Figure~\ref{FigureAmidst}, notice that if $M$ is a maximum matching, then there cannot be any red edges going from $\{2,4,9\}$ to $Y_0$. Indeed if there was such an edge $e$ then we could look at the rainbow matching $M'$ as in Exercise~\ref{ExerciseSwitchingPathExchangeEdges} (corresponding to the switching path in Figure~\ref{FigureAmidst}) and then add $e$ to $M'$ to get a larger rainbow matching. Thus if there are red edges in $G$ touching $\{2,4,9\}$, then they must go through $Y\cap V(M)$, and hence must have corresponding edges in $\DGM$. Since these edges go  from $v$ to $N^+(v)$, this would tell us that $N^+(v)$ is slightly bigger than the estimate we have in Exercise~\ref{ExerciseHighX0Degree}. For just the single path $P$ in Figure~\ref{FigureAmidst}, this increase is very small. But if we had a large collection of switching paths $P$ like the one in Figure~\ref{FigureAmidst}, then the gains may add up to give a large improvement on the bound in Exercise~\ref{ExerciseHighX0Degree}. The next definition captures what kind of information about the path $P$ in Figure~\ref{FigureAmidst} we were interested in.
\begin{definition}[Amidst]\label{DefinitionAmidstIntroduction}
Let $u$ and $v$ be two vertices in an edge-labelled, directed graph $D$, and $\ell$ a label. We say that $\ell$ is amidst $u$ and $v$ if there is a switching path $P=(u,p_1,\dots, p_{d},v)$ from $u$ to $v$ such that the following hold.
\begin{itemize}
\item There are no edges of $P$ labelled by $\ell$.
\item If $\ell$ is a vertex of $D$ then  $\ell\in \{p_1, \dots, p_d, v\}$.
\end{itemize}
\end{definition}
\begin{figure}[ht]
  \centering
    \includegraphics[width=0.8\textwidth]{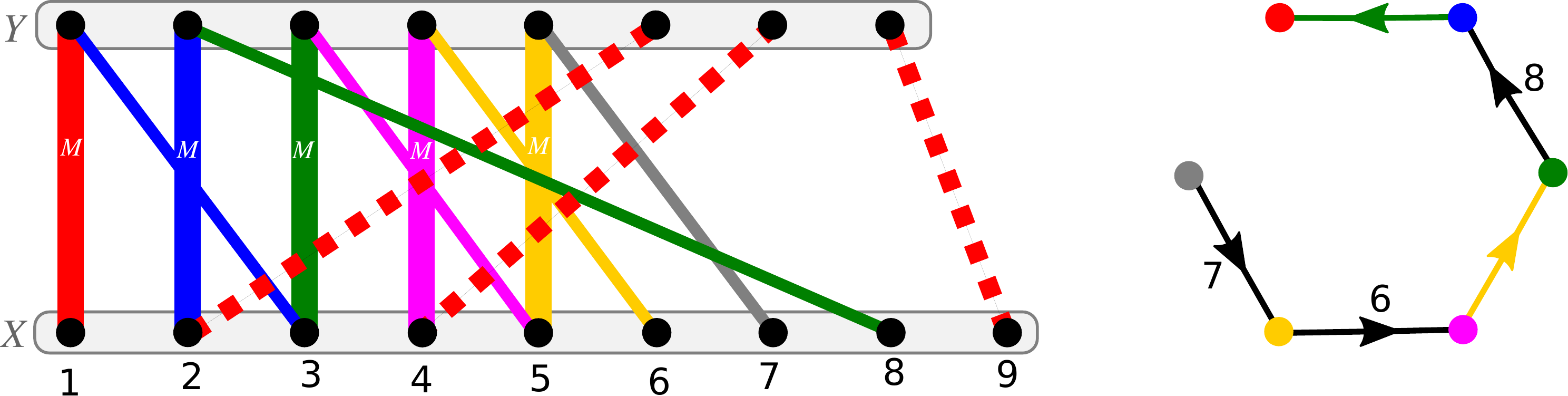}
  \caption{A switching path $P$ in a graph $\DGM$, and three edges that can be added to get a larger matching. 
  Notice that the three  labels $\{\text{blue, pink}, 9\}$ in $\DGM$ are amidst grey and red, as witnessed by the switching path $P$. The vertices in $X$ of blue and pink are $2$ and $4$---which are the $X$-vertices of the corresponding dashed red edges. This shows how amidstness is used to identify vertices of $X$ through which we can add edges to extend $M$. This is the essence of Exercise~\ref{ExerciseAmidstnessEdgesY0}.\newline
Notice that in the above diagram, labels which are not \emph{amidst} a pair of labels cannot be used for augmenting the matching $M$. For example, if there was a red edge $f$ from vertex $5$ to $Y_0$, then one might hope to switch some edges to free up the red colour and vertex $5$ in order to extend the matching by adding $f$. However this cannot be done because freeing up vertex $5$ and colour red would require yellow to be amidst grey and red. In the above diagram yellow is not amidst grey and red. (intuitively because in order to free up red, the yellow edge starting at $6$ must be used).
  }\label{FigureAmidst}
\end{figure}
Notice a parallel between each of the two parts of the definitions of ``switching path'' and ``amidst'': The first parts are about forbidding edges of a path from having particular labels, whereas the second parts are about paths passing through a particular vertex.
This similarity is no coincidence --- a path $P=(p_0, \dots, p_d)$ is a switching path if, and only if, the path $p_0, \dots, p_{d-1}$ is a switching path witnessing the label of $p_{d-1}p_d$ being amidst $p_0$ and $p_{d-1}$.

If $P$ is a path as in Definition~\ref{DefinitionAmidstIntroduction}, then we say that $P$ \emph{witnesses} $\ell$ being amidst $u$ and $v$.
As an example, the path $P$ in Figure~\ref{FigureAmidst} witnesses each of the labels  $\{\text{blue, pink}, 9\}$ being amidst grey and red.
Suppose that $\ell\in C_M$ is the colour of some edge $m$ in $M$.
By an argument similar to the one in the previous paragraph, it is possible to show that if $\ell$ is a label amidst $u$ and $v$, and $u$ is not present on $M$, then there is no colour $v$ edge from $m\cap X$ to $Y_0$. 
\begin{exercise}\label{ExerciseAmidstnessEdgesY0}
Let $\ell, u, v$ be colours in $G$ with $u$ not in $M$ and $\ell$ the colour of an edge $m\in M$.
If $\ell$ is amidst $u$ and $v$, then there is no colour $v$ edge from $m\cap X$ to $Y_0$ in $G$.
\end{exercise}
For a solution to the above exercise see Lemma~\ref{LemmaSwitchingPathVertexAmidst}. 
The essence of the solution is in Figure~\ref{FigureAmidst} --- the dashed red edges are exactly the kind of edges that Exercise~\ref{ExerciseAmidstnessEdgesY0} is about. If any of them were present in the graph then they could be augmented to the matching. 
We now have that given a set of vertices $X'\subseteq X$, if all the corresponding labels are amidst $u$ and $v$, then all the colour $v$ edges touching $X'$ in $G$ must contribute to $N^+(v)$ in $\DGM$.  The following exercise is a strengthening of Exercise~\ref{ExerciseHighX0Degree} which takes into account vertices in $X$ outside $X_0$.
\begin{exercise}\label{ExerciseHighDegreeGeneral}
Suppose that $M$ misses a colour $c^*$, $v$ is a colour in $G$, and $A$ is a set of labels in $\DGM$ which are amidst $c^*$ and $v$. Then $|N^+_A(v)|\geq |A|-|X_0|+\epsilon n-1$.
\end{exercise}
For a solution to this exercise, see Lemma~\ref{LemmaMaximumMatchingExpansion}.
As remarked before, this is actually a strengthening of Exercise~\ref{ExerciseHighX0Degree}. Indeed given a path $P$ as in Exercise~\ref{ExerciseHighX0Degree}, notice that if $x\in X_0$ is a label which does not occur  on edges of $P$, then $x$ is amidst $u$ and $v$ (witnessed by the path $P$.) Applying Exercise~\ref{ExerciseHighDegreeGeneral} with $A$ the set of labels in $X_0$ and not on $P$ we get $|N^+_{A}(v)|\geq |A|-|X_0|+\epsilon n-1\geq \epsilon n-|P|$.

Exercise~\ref{ExerciseHighDegreeGeneral} allows us to finally state the method we use to prove Theorem~\ref{MainTheorem}. We prove that for any $\epsilon>0$, there cannot be arbitrarily large labelled digraphs satisfying the degree condition of Exercise~\ref{ExerciseHighDegreeGeneral}. The following is an intermediate theorem we prove, which implies Theorem~\ref{MainTheorem}.
\begin{theorem}\label{TheoremDigraphIntroduction}
For all $\epsilon$ with $0<\epsilon\leq 0.9$, there is an $N_0=N_0(\epsilon)$ such that the following holds. Let $D$ be any out-properly edge-labelled, simple, directed graph on $n\geq N_0$ vertices. Let $X_0$ be the set of labels which are not vertices of $D$

Then for all $u\in V(D)$, there is a vertex $v$ and a set of labels $A$ amidst $u$ and $v$, such that $|N^+_A(v)|<|A|-|X_0|+\epsilon  n$.
\end{theorem}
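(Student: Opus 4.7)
The plan is a proof by contradiction. Fix $u \in V(D)$ and assume towards a contradiction that every vertex $v$ and every set of labels $A$ amidst $u$ and $v$ satisfies $|N^+_A(v)| \geq |A| - |X_0| + \epsilon n$. Since trivially $|N^+_A(v)| \leq n-1$, this assumption immediately gives the structural inequality $|A| \leq (1-\epsilon)n + |X_0|$ for every such $A$. The goal is therefore to produce, under the contradiction hypothesis, some vertex $v$ whose total set of labels amidst $u$ and $v$ has size strictly exceeding $(1-\epsilon)n + |X_0|$, giving the desired contradiction once $n \geq N_0(\epsilon)$.

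The starting observation is a one-path calculation. Along any switching path $P=(u,p_1,\dots,p_d,v)$ with $k$ vertex-labelled edges, the remaining $d+1-k$ edges are labelled by distinct $X_0$-labels, and the $k$ vertex labels on $P$ are distinct elements of $\{p_1,\dots,p_d\}$. Hence the canonical amidst set $A_P$, consisting of all vertex labels in $\{p_1,\dots,p_d,v\}$ missing from $P$'s edges together with all $X_0$-labels missing from $P$'s edges, has exactly $(d+1-k)+(|X_0|-(d+1-k)) = |X_0|$ elements. Applying the degree hypothesis to $(v,A_P)$ gives $|N^+_{A_P}(v)| \geq \epsilon n$, and every such out-edge extends $P$ into a switching path one edge longer. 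So switching paths from $u$ proliferate at rate $\epsilon n$ per step.

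The core of the proof is then to organise this proliferation so that the \emph{union} of amidst sets over many witnessing paths to a common endpoint exceeds the threshold above. I would grow the family of reaching paths in layers: at each stage extend every current endpoint by its guaranteed $\geq\epsilon n$ out-edges, and keep track of which vertices and $X_0$-labels have been consumed. The main obstacle is diversification: two different switching paths from $u$ to the same $v$ may happen to use the same labels, in which case the union of their amidst sets fails to grow. This is presumably exactly what the notions of \emph{bypassing} (replacing one label on a path by another) and $\lam$-components (equivalence classes of labels that behave uniformly under such replacements) from the proof sketch are designed to handle: a bypass lets one reroute a path through a different label, while the $\lam$-component structure prevents label-reuse from obstructing the gains.

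In summary, the plan is to use the expansion hypothesis to build many switching paths from $u$, apply bypassing and the $\lam$-component decomposition to diversify their label sets, and conclude that for some common endpoint $v$ the set of amidst labels exceeds $(1-\epsilon)n+|X_0|$, contradicting the hypothesised bound. The hardest step, which I expect to dominate the proof, is quantifying the diversification gain per stage---that is, showing the $\lam$-component structure of $D$ forces each round of extension to strictly enlarge some invariant bounded above by a function of $n$, so that the process must terminate after $O_\epsilon(1)$ rounds and hence $n \leq N_0(\epsilon)$.
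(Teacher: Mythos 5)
Your opening observation is correct and useful as a target: the hypothesised bound together with $|N^+_A(v)|\le n-1$ forces every amidst set to satisfy $|A|\le(1-\epsilon)n+|X_0|$, so it would suffice to exhibit one amidst set beating that threshold, and your one-path count $|A_P|=|X_0|$ is right (modulo the caveat that an out-edge $vw$ with $w\in V(P)$ does not literally extend $P$; one truncates to the prefix ending at $w$, losing only $O(|P|)$ endpoints). However, the step you flag as ``hardest'' and leave unfilled is where essentially all of the proof lives, and your conjectures about how the sketch's concepts handle it are not accurate. Bypassing is not ``replacing one label on a path by another'': a vertex $v$ bypasses $B$ means $v$ does not reach any large subset of $B$ (it is a non-reachability statement used to certify that a candidate component has sharp boundaries). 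Likewise $\lambda$-components are not ``equivalence classes of labels''; they are sets of \emph{vertices} $C$ behaving like approximate strongly connected components under the notion of \emph{reaching} (every $v\in C$ has short switching paths, avoiding any given small forbidden label set, to almost all of $C$, and reaches nothing large outside it). Reaching itself, with its parameters controlling forbidden labels, path length, and allowed exceptions, is the central technical device of the proof and does not appear in your sketch.

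More substantively, the quantity that grows per round is not a union of amidst sets to a common endpoint; it is the size of a reached set (equivalently, of a $\lambda$-component). The growth step, Lemma~\ref{ComponentGrowth}, is a Menger-type double-count: within a $\lambda$-component $C$, almost every pair $(x,y)\in(X_0\cup C)\times C$ has $x$ amidst $u$ and $y$ (Lemma~\ref{MengerLemma}); applying your degree hypothesis to \emph{every} $y\in C$ and summing over $y$ gives on the order of $(|C|+\epsilon n)|C|$ labelled out-edges whose endpoints $v$ still reaches, and since $D$ is simple these must spread over roughly $|C|+\epsilon n$ distinct vertices. Making this precise already needs the transitivity of reaching (Lemma~\ref{Transitivity}), the hypergraph intersection Lemma~\ref{LargeIntersectionInHypergraph}, and the existence and chaining of components (Lemmas~\ref{SetReachedBypassed1}, \ref{ComponentExistence}, \ref{ComponentTower}). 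Your sketch correctly identifies the iterative-growth shape and the obstruction (label collisions), but supplies neither a working invariant nor a mechanism for its growth, and the mechanism the paper actually uses is different in kind from the one you guess at.
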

We remark that the set $A$ can be an arbitrary subset of $V(D)\cup X_0$ and that $D$ might not have edges labelled by all elements of $A$.

Modulo the discussion in this section, it is easy to see that this theorem implies Theorem~\ref{MainTheorem}. Indeed suppose that there was a sufficiently large graph $G$ as in Theorem~\ref{MainTheorem}. Suppose that a maximum matching $M$ in $G$ doesn't use every colour.  By Exercises~\ref{ExerciseSimple} and~\ref{ExerciseOutProperlylabelled} we know that the corresponding digraph $\DGM$ is out-properly labelled and simple. Let $c^*$ be some colour outside $M$. By Exercise~\ref{ExerciseHighDegreeGeneral} we know that for any $v\in V(\DGM)$, we have $|N^+_{A}(v)|\geq |A|-|X_0|+\epsilon n-1\geq|A|-|X_0|+0.9\epsilon n$ for any set of labels $A$ amidst $c^*$ and $v$. But this contradicts Theorem~\ref{TheoremDigraphIntroduction}.

We conclude this section by explaining how amidstness can be used to build switching paths.
Recall that a path $P=(p_0, \dots, p_d)$ is a switching path if, and only if, the path $p_0, \dots, p_{d-1}$ is a switching path witnessing the label of $p_{d-1}p_d$ being amidst $p_0$ and $p_{d-1}$.
Because of this, labels which are amidst two vertices $u$ and $v$ have potential to be be used to extend switching paths. 
The following exercise makes this precise.
\begin{exercise}\label{ExerciseAmidstAddOneVertex}
If a label $\ell$  is amidst $x$ and $y$ and there is some vertex $z$ such that the edge $yz$ is present and labelled by $\ell$, then there is a switching path from $x$ to $z$. 
\end{exercise}
A version of this exercise is proved in Lemma~\ref{LemmaAmidstAddOneVertex}.
Exercise~\ref{ExerciseAmidstAddOneVertex} is important because it is one of the tools we will use to build longer and longer switching paths.

\subsection{The right notion of connectedness}\label{SectionProofSketchConnectedness}
Theorem~\ref{TheoremDigraphIntroduction} is proved studying connectivity properties of subgraphs of $D$. It is not immediately apparent why connectivity is useful here. One  hint of it being useful comes from the definition of ``amidst''. 
The first part of the definition of ``amidst'' asks for a $u$ to $v$ path avoiding all edges of label $\ell$. If there are $< k$ colour $\ell$ edges then this is a property $k$-edge-connected graphs have.
The second part of the definition of ``amidst'' asks for a path going from $u$ to $v$ via some other vertex $\ell$. This is a property which $2$-vertex-connected undirected graphs have (as a consequence of Menger's Theorem).

The purpose of connectivity in the proof is to find sets of vertices $C\subseteq V(\DGM)$ which are highly connected in the following sense---for any pair $u,v\in C$ we have $c$ amidst $u$ and $v$ for most $c\in C$. We can then plug $C$ into the assumption of Theorem~\ref{TheoremDigraphIntroduction} in order to deduce that $v$ has a high out-degree.
Knowing that vertices in $C$ have high out-degree is then used to find a set $C'$  which is also highly connected and substantially larger than $C$. Iterating this process we get larger and larger highly connected sets, which can eventually be used to get a contradiction to these sets being smaller than $V(D)$. 

What notion of connectivity should we use? In \cite{PokrovskiyRainbow1}, the following notion was used.
\begin{definition}\label{DefinitionWeakConnectedness}
 Let $W$ be a set of vertices in a labelled digraph $D$. We say that $W$ is $(k,d)$-{rainbow connected} in $D$ if, for any set of at most $k$ labels $S$ and any vertices $x,y \in W$, there is a rainbow $x$ to $y$ path of length $\leq d$ in $D$  avoiding colours in $S$. 
\end{definition}
This kind of connectivity is useful when the graph $G$ is a simple graph rather than a multigraph. Recall that if $G$ is a simple graph then the labelling on $\DGM$ is both in-proper and out-proper. In~\cite{PokrovskiyRainbow1} it is proved that in any  labelled digraph $D$ there is a highly $(k,d)$-connected set $C$ with  $|C|\geq \delta^+(G) -o(n)$ which is a key intermediate result in proving Theorem~\ref{MainTheorem} in the case when $G$ is simple. 

\begin{figure}[ht]
  \centering
    \includegraphics[width=0.37\textwidth]{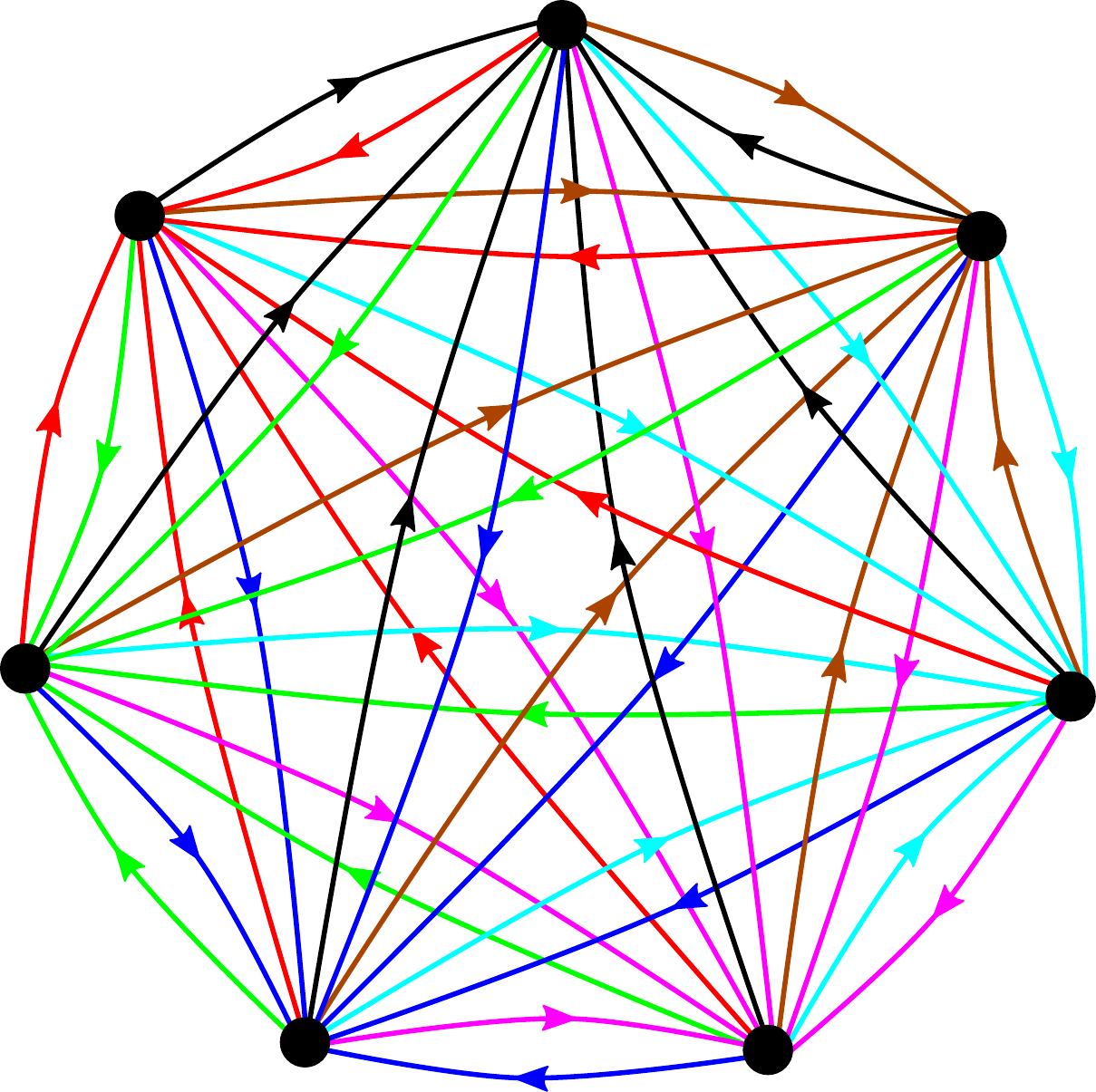}
  \caption{A labelled directed graph whose labelling is out-proper, but not in-proper. Here all the edge-labels are not vertices. The edge-labelling is such that every vertex $v$ has a ``chosen colour'' with all edges directed towards $v$ having the chosen colour. Notice that deleting all edges having a particular label reduces the in-degree of some vertex to $0$, effectively isolating it.}\label{FigureK7}
\end{figure}
When $G$ is a multigraph, then we know that $\DGM$ is out-properly labelled, but not necessarily in-properly labelled. Definition~\ref{DefinitionWeakConnectedness} isn't the right notion of connectivity for studying such graphs. It is possible to have an out-proper labelling of the complete directed graph in which any vertex can be isolated by deleting just one label.  See Figure~\ref{FigureK7} for an example of such a graph. 
This graph is a complete directed graph where every edge $xy$ is labelled by $\ell_y$ (for some label $\ell_y$ which only appears on in-going edges to $y$.)
This graph has a high out-degree but doesn't have any $(1, \infty)$-connected subgraphs.
This is the issue with using $(k,d)$-connectedness since we would like high out-degree graphs to have highly connected subsets. 

We introduce a different kind of connectedness, for which the graph in Figure~\ref{FigureK7} is highly connected.
The following is at the heart of the notion of connectivity which we use.
\begin{definition}[Reaching]
For a vertex $v\in V(D)$ and a set $R\subseteq V(D)$, we say that $v$ $(k,d,\Delta)$-reaches $R$ if for any set $S$ of $\leq k$ labels, there are length $\leq d$ switching paths avoiding $S$ to all, except possibly at most $\Delta$, vertices $x\in R$.
\end{definition}

Standard notions of connectedness are based on studying when two vertices are connected by a path. ``Reaching'' is fundamentally different from these since it is of no use to know that a vertex $u$ reaches  another vertex $v$. In fact any vertex $u$ $(\infty, \infty, 1)$-reaches any singleton $\{v\}$ (since $\Delta =1$, we can let $\{v\}$ be the set of $\Delta$ vertices in $S$ to which we don't need to find a path in the definition of reaching. More generally,  there is nothing to check in the definition of ``reaching'' when  $\Delta\geq |R|$.) Thus ``reaching'' is only meaningful when we talk about a vertex reaching a reasonably large set of vertices $R$. Notice that the graph in Figure~\ref{FigureK7} has good connectivity properties with our new definition. 
\begin{exercise}
For the labelled directed graph $D$ in Figure~\ref{FigureK7} and any $k\leq \Delta$, show that every $v\in V(D)$ $(k,1,\Delta)$-reaches $V(D)$.
\end{exercise}

To prove Theorem~\ref{TheoremDigraphIntroduction}, we will need to have a fairly deep understanding of ``reaching''. This involves first proving several basic consequences of the definition such as showing that reaching is monotone under change of parameters, preserved by unions, and has a kind of transitivity property. These properties are proved in Section~\ref{SectionBasicProperties}.

Our main goal when studying ``reaching'' will be to show that some analogue of connected components exists for the new notion of connectedness. Recall that a strongly connected component $C$ in  a directed graph is a maximal set of vertices in a graph such that for any two vertices $x$ and $y$ in $C$ there is a path from $x$ to $y$. Analogously, in a labelled graph we would like to find a maximal set  $C$ such that any $x\in C$ reaches all of $C$ for suitable parameters. This notion of a maximal reached set seems a bit hard to work with, so we will instead deal with the following approximate version.
\begin{definition}[$(k,d,\Delta,\gamma, \hat k, \hat d, \hat \Delta)$-component]
A set $C\subseteq V(D)$ is a $(k,d,\Delta,\gamma)$-component if for any vertex $v\in C$, there is  a set $R_v$ with $|R_v\symdiff C|\leq  \epsilon^3 n$ such that the following hold.
\begin{enumerate}[(i)]
\item $v$ $(k,d,\Delta)$-reaches $R_v$. 
\item $v$ doesn't $(\hat k, \hat d, \hat \Delta)$-reach any set $R$ disjoint from $R_v$ with  $|R|\geq \gamma n$.
\end{enumerate}
\end{definition}
In other words a $(k,d,\Delta,\gamma, \hat k, \hat d, \hat \Delta)$-component is a set $C$ such that every vertex $v\in C$  reaches most of $C$ and doesn't reach any large set outside $C$. It is not at all obvious that $(k,d,\Delta,\gamma, \hat k, \hat d, \hat \Delta)$-components exist for particular parameters $k,d,\Delta,\gamma, \hat k, \hat d, \hat \Delta$. An important intermediate lemma we prove in Section~\ref{SectionComponentExistence}, is that for given $k,d,\Delta,\gamma$, there is a $(k',d',\Delta',\gamma', \hat k', \hat d', \hat \Delta')$-component for new parameters $k',d',\Delta',\gamma', \hat k', \hat d', \hat \Delta'$ close to $k,d,\Delta,\gamma$.

We make a remark about how constants will be dealt with throughout this paper. Looking at the definitions of ``$(k,d,\Delta)$-reaches'' and ``$(k,d,\Delta,\gamma, \hat k, \hat d, \hat \Delta)$-component'', they  look a bit scary because of the large number of parameters there are in each definition. In the actual proof of Theorem~\ref{TheoremDigraphIntroduction} in Section~\ref{SectionDirectedGraphs} this won't be the case because we introduce a single parameter, $\lambda$, which will control each of the parameters $k,d,\Delta,\gamma, \hat k, \hat d, \hat \Delta$. Formally, in Section~\ref{SectionDirectedGraphs} we define seven explicit functions $k_{\epsilon}(\lam)$, $d_{\epsilon}(\lam)$, $\Delta_{\epsilon}(\lam)$, $\gamma_{\epsilon}(\lam)$, $\hat k_{\epsilon}(\lam)$, $\hat  d_{\epsilon}(\lam)$, and $\hat \Delta_{\epsilon}(\lam)$ depending on $\epsilon$ (which is the constant given in the statement of Theorem~\ref{TheoremDigraphIntroduction}.) Then we say that $v$ $\lam$-reaches  a set $R$ if $v$ $(k_{\epsilon}(\lam),d_{\epsilon}(\lam),\Delta_{\epsilon}(\lam))$-reaches $R$, and that a set $C$ is a $\lam$-component if $C$ is a $(k_{\epsilon}(\lam),d_{\epsilon}(\lam),\Delta_{\epsilon}(\lam), \gamma_{\epsilon}(\lam), \hat k_{\epsilon}(\lam), \hat  d_{\epsilon}(\lam) , \hat \Delta_{\epsilon}(\lam))$-component.
The advantage of this is that it means that only one parameter, $\lambda$, needs to be kept track of between the various lemmas that we prove. This  makes the high level structure of the proof of Theorem~\ref{TheoremDigraphIntroduction} easier to follow.

We mention a final definition which we use in the paper. 
\begin{definition}[Bypassing]
For a vertex $v\in V(D)$ and a set $B\subseteq V(D)$, we say that $v$ $(\hat k, \hat d,\hat \Delta, \gamma)$-bypasses $B$ if $v$ doesn't $(\hat k,\hat  d,\hat  \Delta)$-reach any set $R$ contained in $B$ with  $|R|\geq \gamma n$
\end{definition}
The significance of the above definition is that part (ii) of the definition of $(k,d,\Delta,\gamma, \hat k, \hat d, \hat \Delta)$-component can be now rephrased as ``$v$  $(\hat k, \hat d, \hat \Delta, \gamma)$-bypasses $V(D)\setminus R_v$''. 
Thus the notion of bypassing is important because it eases the study  of $(k,d,\Delta,\gamma, \hat k, \hat d, \hat \Delta)$-components.

Recall that at the start of the section we said that the reason for using connectedness is to be able to study ``amidstness''. It is not immediately apparent how the definitions we introduce do this. With a bit of work it is possible to prove that in a $(k,d,\Delta,\gamma, \hat k, \hat d, \hat \Delta)$-component $C$, most triples $(u,c,v)\in C\times C\times C$ have $c$ amidst $u$ and $v$.
\begin{lemma}\label{LemmaMengerIntroduction}
For $\epsilon>0$,  $D$ a  sufficiently large labelled directed graph,  and  $C$ a $(k,d,\Delta,\gamma,$ $\hat k,$ $\hat d,$ $\hat \Delta)$-component in $D$ for suitable $k,d,\Delta,\gamma, \hat k, \hat d, \hat \Delta$, there are at least $|C|^3-(\epsilon n)^3$ triples $(u,c,v)\in C\times C\times C$ with $c$ amidst $u$ and $v$.
\end{lemma}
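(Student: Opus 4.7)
The plan is to upper bound the number of \emph{bad} triples $(u,c,v) \in C \times C \times C$ for which $c$ is not amidst $u$ and $v$ by $(\epsilon n)^3$, by partitioning the bad triples into two cases and bounding each.

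Case 1 handles triples for which there is no switching path from $u$ to $v$ at all avoiding label $c$. For each fixed $u \in C$, I would apply the reaching property of $u$ with forbidden label set $S = \{c\}$ (of size $1 \leq k$): by definition of reaching, switching paths of length $\leq d$ from $u$ exist to all but $\Delta$ vertices of $R_u$. Since $|R_u \symdiff C| \leq \epsilon^3 n$, at most $\Delta + \epsilon^3 n$ vertices $v \in C$ fail. Summing over $u$ and $c$ gives $|C|^2 (\Delta + \epsilon^3 n)$ bad triples in this case, which for the chosen parameters (where $\Delta$ and $\epsilon^3 n$ are suitably small relative to $\epsilon n$) is much smaller than $\tfrac{1}{2}(\epsilon n)^3$.

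Case 2 handles triples with $c \in V(D)$ for which some switching path from $u$ to $v$ avoids label $c$, but none passes through $c$. Here I would construct a witnessing path by concatenation. First, using reaching of $u$ with $S = \{c\}$, obtain a switching path $P_1$ of length $\leq d$ from $u$ to $c$, which exists for all but at most $\Delta + \epsilon^3 n$ choices of $c$. Second, using reaching of $c$ with $S' = \{c\} \cup L(P_1)$, a set of labels of size $\leq d+1$ (which requires the parameter relation $k \geq d+1$), obtain a switching path $P_2$ from $c$ to $v$ avoiding $S'$, which exists for all but at most $\Delta + \epsilon^3 n$ choices of $v$. Then I would verify that $P_1 + P_2$ is a switching path from $u$ to $v$: the rainbow property follows from the label-disjointness of $P_1$ and $P_2$, and the vertex-label condition transfers from $P_1$ and $P_2$ separately (any edge of the $P_2$-portion labelled by a vertex $w$ has $w = q_j$ with $j \geq 1$, hence $w$ appears in $P_1 + P_2$ after position $|P_1| \geq 1$ and before the edge in question). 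By construction this path passes through $c$ and avoids label $c$, witnessing $c$ amidst $u$ and $v$.

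The main obstacle is simplicity: $P_2$ might pass through a vertex of $V(P_1) \setminus \{c\}$, in which case $P_1 + P_2$ is a closed walk rather than a path. The reaching definition only forbids labels, not vertices, so this cannot be handled by a direct modification of $S'$. I would address this by exploiting the bypassing half of the $(k,d,\Delta,\gamma,\hat k,\hat d,\hat \Delta)$-component structure together with the smallness of $V(P_1) \setminus \{c\}$ (which has $\leq d$ elements): if the set of $v$ for which every switching path from $c$ of length $\leq d$ avoiding $S'$ is forced through $V(P_1) \setminus \{c\}$ were large, then this small vertex set would act as a separator between $c$ and many reached vertices, contradicting either the reaching of $c$ into $R_c$ or the bypassing bound. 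I expect this step to require the most care and to be the place where the specific polynomial dependencies between $k,d,\Delta,\gamma,\hat k,\hat d,\hat\Delta$ are used; once it is in hand, the count of bad triples in Case 2 is bounded by $|C|^2$ times a quantity of order $\Delta + \epsilon^3 n + d\cdot\hat\Delta$, which again stays well below $\tfrac{1}{2}(\epsilon n)^3$ for the chosen parameters. Combining the two cases yields the bound.
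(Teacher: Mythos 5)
The high-level plan (bound bad triples by constructing witnessing paths via concatenation $u\to c\to v$) is essentially the same as the paper's construction in Lemma~\ref{MengerLemma}, from which Lemma~\ref{LemmaMengerIntroduction} is derived. But there is a genuine gap, caused by a misreading of the paper's framework, and it is precisely at the step you flag as the hard one. You write ``The reaching definition only forbids labels, not vertices, so this cannot be handled by a direct modification of $S'$.'' This is not how the definitions work here. In an edge-labelled digraph the label set is $V(D)\cup X_0$, so vertices of $D$ \emph{are} labels, and the ``avoids'' relation built into reaching is defined (see the start of Section~\ref{SectionDirectedGraphs}) via $\underline{P}=V(P)\cup\{\ell:\ell\text{ is the label of some }e\in E(P)\}$: a path $P$ avoids $S$ iff $S\cap\underline P\subseteq\{\text{start vertex of }P\}$, so it must avoid vertices of $S$ as well as edge-labels in $S$. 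Consequently, if you ask $P_2$ to avoid $\underline{P_1}$ (a set of $\leq 3d$ labels, which fits as long as $k\geq 3d$), the concatenation $P_1+P_2$ is automatically a path (no repeated vertices) and automatically rainbow, and Lemma~\ref{LemmaAmidstConcatenate} tells you it even witnesses amidstness directly. There is no separator problem to solve, no need to invoke the bypassing half of the component definition, and no delicate polynomial dependency to massage at this step. The entire paragraph about separators and bypassing should be deleted and replaced with ``take $S'=\underline{P_1}$.''

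There is also a smaller but real inconsistency in your Case~2 set-up: you ask the reaching of $u$ with $S=\{c\}$ to produce a path $P_1$ \emph{to} $c$. Under the paper's ``avoids'' relation (which is what reaching quantifies over), a path ending at $c$ cannot avoid $\{c\}$ because $c\in\underline{P_1}$ and $c$ is not the starting vertex. The right move is to take $S=\emptyset$ for $P_1$: a switching path ending at $c$ automatically has no edge labelled $c$ (this follows from the second bullet of the switching-path definition applied to the final vertex), so the ``no edge labelled $c$'' property you wanted from $S=\{c\}$ is free. With these two repairs --- $S=\emptyset$ for $P_1$, and $S'=\underline{P_1}$ for $P_2$ --- your Case~2 becomes exactly the concatenation argument the paper carries out inside Lemma~\ref{MengerLemma}, and your triple-count goes through (modulo constants absorbed by ``suitable parameters''). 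Your Case~1 is fine as stated, and in fact is subsumed by the Case~2 construction once you notice that for $c\in C\subseteq V(D)$ you always need a path through $c$, so the two cases collapse to one.
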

The above lemma is an easy consequence of Lemma~\ref{MengerLemma} which we prove in Section~\ref{SectionComponentGrowth}. The full Lemma~\ref{MengerLemma} will say a bit more, giving information about the structure of triples $(u,c,v)\in C\times C\times C$ with $c$ amidst $u$ and $v$.

\subsection{An overview of the proof of Theorem~\ref{TheoremDigraphIntroduction}}\label{SectionProofOverview}
Here we give a high level overview of the strategy of the proof of Theorem~\ref{TheoremDigraphIntroduction}.
The proof begins by supposing for the sake of contradiction that there is a vertex $u\in V(D)$ such that for every vertex $v$ and a set of labels $A$ amidst $u$ and $v$ we have $|N^+_A(v)|<|A|-|X_0|+\epsilon  n$.
The proof of the theorem naturally splits into three parts.
\begin{enumerate}
\item Find  $(k_i,d_i,\Delta_i,\gamma_i, \hat k_i,\hat d_i,\hat \Delta_i)$-components $C_0, \dots, C_m$ for suitable parameters such that $C_i\cap C_{i+1}\neq \emptyset$. In addition we find a short switching path from $u$ to each $C_i$. This is done as follows:
\begin{enumerate}
\item[1.1] Prove lemmas along the lines of ``for any vertex $v$ and parameters $k,d,\Delta,\gamma$ there are complementary sets $R_v$ and $B_v$ such that $v$ $(k',d',\Delta')$-reaches $R_v$ and doesn't reach anything in $B_v$  for suitable parameters $k',d', \Delta'$ close to $k, d, \Delta$. This is performed in Lemmas~\ref{SetReachedBypassed1} and~\ref{SetReachedBypassed2}.
\item[1.2] Show that reaching has a transitivity property: If $v$ reaches a sufficiently large set $R$ and every vertex in $R$  reaches a set $R'$, then $v$ 
reaches $R'$. This is performed in Lemma~\ref{Transitivity}.
\item[1.3] Choose a vertex $v$ with the set $R_v$ from part 1.1 as small as possible. Using transitivity, it is possible to show that for most vertices $u\in R_v$ we have that $|R_u\symdiff R_v|$ is small. By letting $C=R_v$ minus a few vertices it is possible to get a single component of the sort we want. This is performed in Lemma~\ref{ComponentExistence}.
\item[1.4] By iterating 1.3, we can get the sequence of components  $C_0, \dots, C_m$ which we need. This is performed in Lemma~\ref{ComponentTower}.
\end{enumerate}

\item Show that  if $C$ is a $(k,d,\Delta,\gamma, \hat k,\hat d,\hat \Delta)$-component close to $u$, then \textbf{\emph{either}} any $v\in C$ $(k',d',\Delta',\gamma')$-reaches some set $R$ with $|R|\geq |C|+(\epsilon -o(1))n$ for suitable parameters \textbf{\emph{or}} the conclusion of Theorem~\ref{TheoremDigraphIntroduction} holds for some $A\subseteq C\cup X_0$. The formal statement of this is Lemma~\ref{ComponentGrowth}. This step is performed as follows:
\begin{enumerate}
\item [2.1] We show that for most triples $(u,c,v)\subseteq C\times C \times C$, $c$ is amidst $u$ and $v$. This is performed in Lemma~\ref{MengerLemma}.
\item [2.2] Let $R$ be the set of $z\in V(D)$ for which there are a lot of triples $(u,c,v)$ such that  $vz$ is an edge labelled by $c$ and $c$ is amidst $u$ and $v$. 
\item [2.3] The vertex $v$ ends up $(k',d',\Delta')$-reaching $R$ as a consequence of 2.1. This is performed in Claim~\ref{ClaimRReached}.
\item [2.4] Use the assumption of Theorem~\ref{TheoremDigraphIntroduction} applied to a suitable subset of $C\cup X_0$ together with Lemma~\ref{LemmaMengerIntroduction} to show that $|R|\geq |C|+(\epsilon -o(1))n.$ This is performed in Claims~\ref{EQBoundLZP} and~\ref{ClaimRLarge}.
\end{enumerate}

\item Combining parts 1 and 2 and the definition of $(k,d,\Delta,\gamma, \hat k,\hat d,\hat \Delta)$-component we obtain that $|C_{i+1}|\geq |C_i|+(\epsilon-o(1)) n$ for every $C_i$ from part 1. If the number of components $m\gg \epsilon^{-1}$ this gives a contradiction to $|C_m|\leq |V(D)|=n$.
\end{enumerate}

\subsection{An example}
In this section we give an illustrative labelled directed graph and explain how the proof of Theorem~\ref{TheoremDigraphIntroduction} works for that particular graph.

\begin{figure}[ht]
  \centering
    \includegraphics[width=0.87\textwidth]{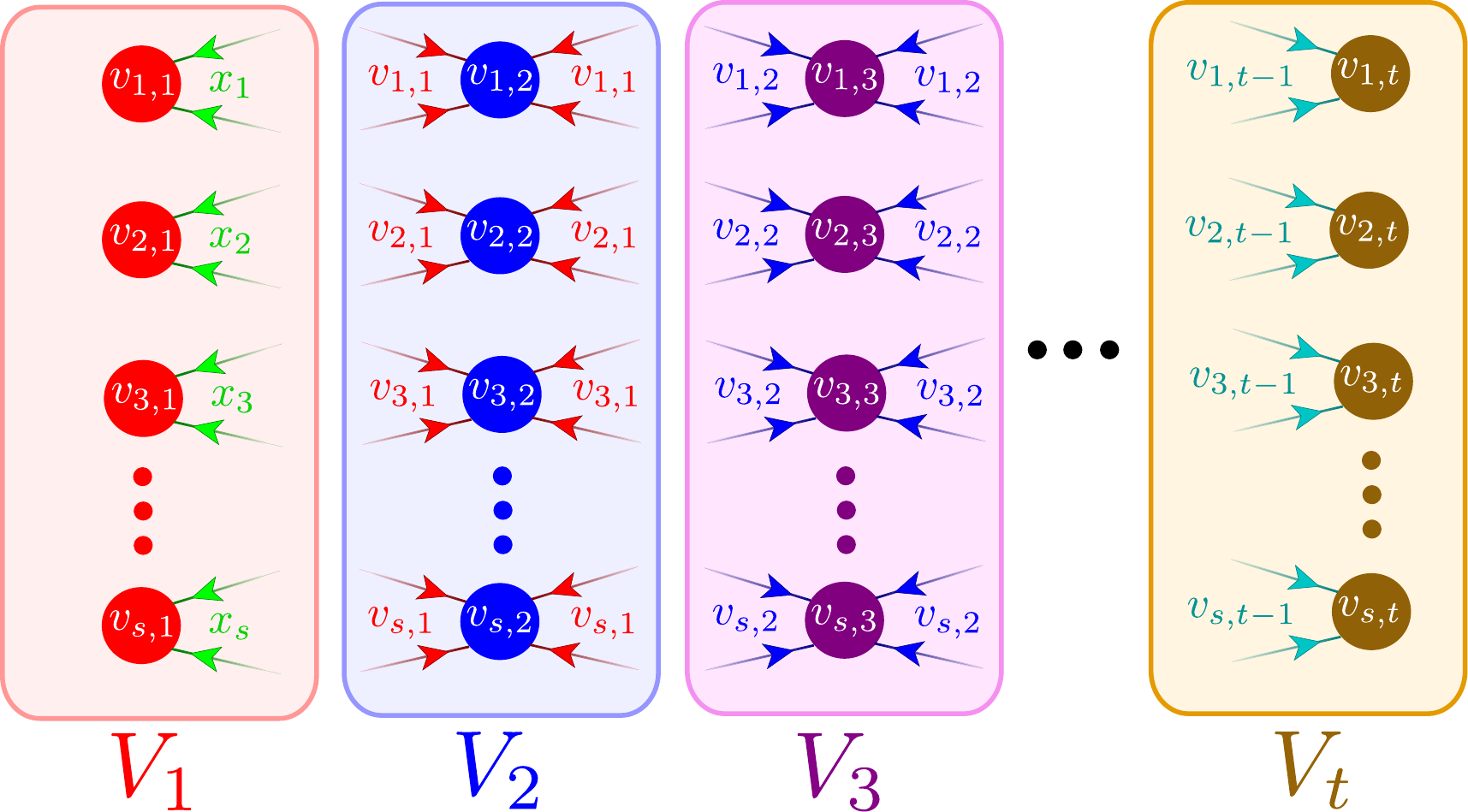}
  \caption{The labelled, directed graph $D_{s,t}$.}\label{FigureExampleGraph}
\end{figure}

For a fixed $\epsilon>0$ and $t\ll \epsilon^{-1}$, we define a directed graph $D_{s,t}$ as follows: $D_{s,t}$ has $n=st$ vertices split into $t$ disjoint classes $V_{1},\dots, V_t$ each of size $s$ with $V_i=\{v_{1,i}, \dots, v_{s,i}\}$. The set of non-vertex labels in $D_{s,t}$ is $X_0=\{x_1, \dots, x_s\}$. All the edges are present in $G$ going in both directions. Each vertex in $G$ has it's own ``chosen label'' with all edges directed towards the vertex having that label (much like the graph in Figure~\ref{FigureK7}.) For $i\neq 1$ and any vertex $u$, the edge $uv_{j,i}$ has label $v_{j, i-1}$. For a vertex $u$ and a vertex $v_{j,1}\in V_1$, the edge $uv_{j,1}$ has label $x_j$.

Because of the simple structure the graphs $D_{s,t}$ have, it is possible to describe all switching paths, reached sets, bypassed sets, and components in these graphs. 

\begin{exercise}[Switching paths in $D_{s,t}$]
A directed path $P=(p_0, p_1, \dots, p_d)$  is a switching path if, and only if,  ``$v_{j,i}= p_k\in  V(P) \implies v_{j,i-1}, \dots, v_{j, 1}\in \{p_1, \dots, p_{k-1}\}$''.
\end{exercise}

\begin{exercise}[Amidstness in $D_{s,t}$]\label{ExerciseAmidstDst}
\
\begin{itemize}
\item A vertex $v_{a,b}$ is amidst   vertices $v_{c,d}$ and $v_{e,f}$ if, and only if, $b,d\neq f$  and $v_{a,b}\neq v_{c,d}$.
\item A label $x_{b}$ is amidst vertices $v_{c,d}$ and $v_{e,f}$ if, and only if, $b,d\neq f$.
\end{itemize}
\end{exercise}

In each of the next three exercises we give a necessary condition and a sufficient condition for a set to be reached, bypassed, or be a component. Although the necessary conditions and a sufficient conditions which we give are not exactly the same, they are always quite similar. Thus the next three exercises should be seen as giving a near-characterization of sets which are reached, bypassed, and components in $D_{s,t}$.
\begin{exercise}[Reaching in $D_{s,t}$]
Let $v\in V(D_{s,t})$ and $R\subseteq V(D_{s,t})$.
\begin{itemize}
\item If $v$ $(k, d, \Delta)$-reaches $R$ then $|R\setminus (V_1\cup \dots\cup V_d)|\leq \Delta$.
\item If $|R\setminus (V_1\cup \dots\cup V_d)|\leq \Delta- (k+1)d$, then  $v$ $(k, d, \Delta)$-reaches $R$.
\end{itemize}
\end{exercise}

\begin{exercise}[Bypassing in $D_{s,t}$]
Let $v\in V(D_{s,t})$, $B\subseteq V(D_{s,t})$, and $\hat\Delta\geq (\hat k+1)\hat d$.
\begin{itemize}
\item If  $v$ $(\hat k, \hat d, \hat \Delta, \gamma)$-bypasses $B$ then $|B\cap (V_1\cup \dots\cup V_d)|\leq  \gamma n$.
\item If $|B\cap (V_1\cup \dots\cup V_d)|< \gamma n-\hat\Delta$, then  $v$ $(\hat k, \hat d, \hat \Delta, \gamma)$-bypasses $B$.
\end{itemize}
\end{exercise}

\begin{exercise}[Components in $D_{s,t}$]\label{ExerciseComponentsDst}
Let $C\subseteq V(G)$, $\epsilon^3 n\geq \max(\gamma n, \Delta,   \hat \Delta$), $\Delta\geq (k+1)d$, $\hat \Delta\geq (\hat k+1)\hat d$, and $\hat d\leq d$
\begin{itemize}
\item If $C$ is a $(k,d,\Delta,\gamma,$ $\hat k,$ $\hat d,$ $\hat \Delta)$-component then $|C\symdiff  (V_1\cup \dots\cup V_d)|\leq 8\epsilon^3 n$.
\item If $|C\symdiff  (V_1\cup \dots\cup V_d)|\leq \epsilon^3 n$ then $C$ is a $(k,d,\Delta,\gamma,$ $\hat k,$ $\hat d,$ $\hat \Delta)$-component.
\end{itemize}
\end{exercise}

Using Exercise~\ref{ExerciseAmidstDst} we can check that Thereom~\ref{TheoremDigraphIntroduction} holds for the graphs $D_{s,t}$. To see this notice that for any pair of vertices $v_{a,b}$ and $v_{c,d}$ for $b\neq d$ and any set of labels $A$ with $A\cap \{v_{i,b}, v_{i,d}:i=1, \dots, s\}=\emptyset$ we have $A$ amidst $v_{a,b}$ and $v_{c,d}$. Notice also that $|N^+_A(v)|=|A|$ or $|A|-1$ for every vertex $v$ and set of labels $A$. Finally, recall that $|X_0|=s\geq\epsilon n$.
Thus we see that for a given vertex $u=v_{a,b}$ the conclusion of Thereom~\ref{TheoremDigraphIntroduction} holds by choosing $v=v_{c,d}$ for $d\neq b$ and $A$ any set of labels disjoint from $\{v_{i,b}, v_{i,d}:i=1, \dots, s\}$ with $|A|>n$ (for example we could take $A=V(D_{s,t})\cup X_0\setminus \{v_{i,b}, v_{i,d}:i=1, \dots, s\}$.) 

This example teaches us an important lesson. It is not hard  to see that if $A$ is amidst $u$ and $v$ and satisfies $|N^+_A(v)|<|A|-|X_0|+\epsilon  n$ then $|A|>n$ must hold. From this we see that any proof of Theorem~\ref{TheoremDigraphIntroduction} for the graph $D_{s,t}$ must ``find'' a very large set of labels $A$ amidst some pair of vertices. In the remainder of this section, we explain how the strategy in Section~\ref{SectionProofOverview} finds such a set $A$.

In Part 1 of the overview in Section~\ref{SectionProofOverview}, the proof finds  $(k_i,d_i,\Delta_i,\gamma_i, \hat k_i, \hat d_i,\hat\Delta_i)$-components $C_0, \dots, C_m$ for suitable parameters such that $C_i\cap C_{i+1}\neq \emptyset$. Here ``suitable parameters'' means that $m\gg \epsilon^{-1}$, that $d_i, \Delta_i, \gamma_i, \hat d_i$, and $\hat\Delta_i$ increase with $i$ while $k_i$ and $\hat k_i$ decrease with $i$. For the sake of argument, let us consider what happens when $m=\epsilon^{-4}$, $d_i=\hat d_i=i$, $\gamma_i= \epsilon^9 i$, $\Delta_i= \hat \Delta_i=2im$, and $k=\hat k=m-i$. 
For these values, Exercise~\ref{ExerciseComponentsDst} tells us that for $i=1, \dots, t$ we must have $|C_i\symdiff (V_{1}\cup \dots\cup V_i)|\leq 8\epsilon^3 n$ and that for $i=t+1, \dots, m$ we have $|C_i\symdiff (V_{1}\cup \dots\cup V_t)|\leq 8\epsilon^3 n$. 

In Parts 2 and 3 of the overview in Section~\ref{SectionProofOverview}, it is shown that if $C$ is a component close to $u$, then {\emph{either}} any $v\in C$ $(k',d',\Delta',\gamma')$-reaches some set $R$ with $|R|\geq |C|+(\epsilon -o(1))n$ for suitable parameters {\emph{or}} the conclusion of Theorem~\ref{TheoremDigraphIntroduction} holds for some $A\subseteq C\cup X_0$ and $v\in C$. Testing this for the components $C_0, \dots, C_m$ from Part 1, we see that for $i=0, \dots, t-1$, there is a set $R$ such that  $v$ $(k_i,d_i+1,\Delta_i,\gamma_i)$-reaches some set $R$ with $|R|\geq |C|+(\epsilon -o(1))n$ (namely we can take $R=V_1\cup\dots \cup V_{i+1}$.) On the other hand for $i=t, \dots, m$ such a set $R$ doesn't exist, so Part 2 would imply that the conclusion of Theorem~\ref{TheoremDigraphIntroduction} holds for some $A\subseteq C_i\cup X_0$ and $v\in C$. If $u=v_{a,b}$ then we see that this is indeed the case with e.g., $A=C_i\cup X_0\setminus\{v_{1,b}, \dots, v_{t,b}, v_{1,c}, \dots, v_{t,d}\}$ and $v=v_{a,c}$.

\subsection*{Notation}
For standard notation we follow~\cite{BollobasModernGraphTheory}.
A path $P=(p_0, p_1, \dots, p_d)$ in a directed graph $D$ is a sequence of vertices $p_0, p_1, \dots, p_d$ such that $p_ip_{i+1}$ is an edge for $i=0, \dots, d-1$. The \emph{order} of $P$ is the number of vertices it has, and the \emph{length} of $P$ is the number of edges it has.
We'll use additive notation for concatenating paths i.e. if $P=(p_1,p_2,\dots, p_i)$ and $Q=(p_i,p_{i+1}, \dots, p_d)$ are two internally vertex-disjoint paths, then we let $P+Q$ denote the path $(p_1p_2\dots p_d)$.
Throughout the paper, all directed graphs will be simple meaning that an edge $xy$ appears only at most once. We do allow both of the edges $xy$ and $yx$ to appear in the directed graphs we consider.
For clarity we will omit floor and ceiling signs where they aren't important.

Our digraphs are always simple i.e. they never have two copies of an edge going from a vertex $u$ to a vertex $v$.
A digraph is out-properly labelled if all out-going edges at a vertex have different labels. For a vertex $v$ in a digraph, the out-neighbourhood of $v$, denoted $N^+(v)$ is the set of $w\in V(D)$ with $vw$ an edge of $D$.

Throughout the paper we will deal with edge-\emph{coloured} undirected graphs and edge-\emph{labelled} directed graphs. 
The difference between the two concepts is that in an edge-coloured graph, the set of possible colours is just some ambient set, whereas in an edge-labelled digraph $D$ the set of possible labels is $V(D)\cup X_0$ where $V(D)$ is the set of vertices of $D$ (and $X_0$ is some ambient set unrelated to $D$.)
Formally, an edge-labelled  directed graph is defined to be a directed graph $D$ together with a set $X_0$ with $X_0\cap V(D)=\emptyset$ and a labelling function $f:E(D)\to V(D)\cup X_0$.
The set $X_0$ is called the set of \emph{non-vertex labels} in $D$. We call $X_0\cup V(D)$ the \emph{set of labels} in $D$ (regardless of whether $D$ actually has edges labelled by all elements of $X_0\cup V(D)$).

Throughout the paper we will always use  ``$G$'' to denote a coloured bipartite graph with parts $X$ and $Y$ and $M$ a rainbow matching in $G$.  We'll use $C_G$  to denote the set of colours in $G$ and $C_M$ to denote the set of colours in $M$. 
We'll use $V(M)$ to mean the set of vertices contained in edges of $M$,
and $X_0=X\setminus V(M)$ and $Y_0=Y\setminus V(M)$ to denote the vertices in $X$ and $Y$ outside $M$.
For a colour $c\in C_M$, we use $m_c$ to denote the colour $c$ edge of $M$.

\section{From bipartite graphs to directed graphs}\label{SectionBipartite}
In this section we show how go from the Aharoni-Berger Conjecture to a problem about edge-labelled digraphs.
We define a directed, edge-labelled digraph $\DGM$ corresponding to a coloured bipartite graph $G$ and a rainbow matching $M$ in $G$. 
\begin{definition}[The directed graph $\DGM$]\label{DefinitionDGM}
Let $G$ be a coloured bipartite graph with parts $X$ and $Y$ and $n$ colours. Let $M$ be a rainbow matching in $G$. Let $X_0=X\setminus V(M)$ be the subsets of $X$  disjoint from $M$. Let $C_G$ be the set of colours used in $G$ and $C_M\subset C_G$ be the set of colours used on edges in $M$. For a colour $c\in C_M$, we let $m_c$ denote the colour $c$ edge of $M$.
The labelled digraph $\DGM$ corresponding to $G$ and $M$ is defined as follows:
\begin{itemize}
\item The vertex set of $\DGM$ is the set $C_G$.  
\item The edges of $\DGM$ are be labelled by elements of the set $X_0\cup C_M$.  
\item For two colours $u$ and $v\in V(\DGM)$ and a vertex $x \in X$, there is a directed edge from $u$ to $v$ in $\DGM$ whenever $v\in C_M$ and there is a colour $u$ edge from $x$ to  $m_v\cap Y$. 
\begin{itemize}
\item If $x\in X_0$ then the edge $uv$ is labelled by $x$.
\item If $x\in m_c\in M$ then $uv$ is labelled by $c$, the colour of $m_c$.
\end{itemize}
\end{itemize}
\end{definition}
Notice that every edge  $e\in E(G)$ corresponds to at most one edge of $\DGM$.
There are two types of edges in $G$ which do not correspond to edges of $\DGM$: Edges going through $Y_0$ do not appear in $\DGM$, and also the edges of $M$ do not appear in  $\DGM$ either.
Thus the edges of $\DGM$ are naturally in bijection with the edges of $G[X\cup V(M)]\setminus M$.
  Also notice that if $c\not\in C_M$ is a colour which doesn't appear in $M$, then the in-degree of $c$ in $\DGM$ is zero.

For any set $L$ of labels in $\DGM$ we define a corresponding set $(L)_X$ of vertices in $X$ as follows.
For a colour $c\in C_M$ we define $(c)_X$ to be $m_c\cap X$ where $m_c$ is the colour $c$ edge of $M$. For any vertex $x\in X_0$, we set $(x)_X=\{x\}$. For $L$ a set of labels of $\DGM$, we define $(L)_X=\bigcup_{\ell\in L} (\ell)_X$ i.e. $(L)_X$ is the subset of $L$ consisting of vertices in $X_0$  together with  $M'\cap X$ where $M'$ is the subset of $M$ consisting of edges whose colour is in $L$.

Notice that with the above definition, if $xy$ is an edge of $G$ and $\ell$ is the label of the corresponding edge of $\DGM$, then we always have $(\ell)_X=x$. Conversely if $uv$ is an edge of $\DGM$ labelled by $\ell$, then the corresponding edge of $G$ goes from $(\ell)_X$ to $m_v\cap Y$. Also notice that $|(L)_X|=|L|$ for any set of labels of $\DGM$.

It turns out that if $G$ is properly coloured, then $\DGM$ is out-properly labelled and simple.
\begin{lemma}\label{LemmaProperlyColoured}
Let $G$ be a properly edge-coloured bipartite graph and $M$ a matching in $G$.
Then the directed graph $\DGM$ is out-properly labelled and simple. 
\end{lemma}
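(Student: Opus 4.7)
The plan is to prove both statements by the same basic observation: proper edge-colouring of $G$ says no two edges of the same colour share a vertex, and each edge of $\DGM$ comes from an edge of $G$ of a specific colour anchored at a specific vertex. I will fix $u \in V(\DGM)$ (a colour of $G$) and argue that distinct edges out of $u$ must differ at their $X$-endpoint (forcing distinct labels) and at their $Y$-endpoint (forcing distinct heads).

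First I would verify out-proper labelling. Suppose $uv_1$ and $uv_2$ are two out-edges of $u$ in $\DGM$ carrying the same label $\ell \in X_0 \cup C_M$. By Definition~\ref{DefinitionDGM}, each such edge corresponds to a colour-$u$ edge of $G$ from the (unique) vertex $(\ell)_X \in X$ to $m_{v_i} \cap Y$ for $i=1,2$. Thus we have two colour-$u$ edges of $G$ incident to the single vertex $(\ell)_X$. Since $G$ is properly edge-coloured, these two edges must coincide, so $m_{v_1}\cap Y = m_{v_2}\cap Y$. Because $M$ is a matching, at most one edge of $M$ meets any given vertex of $Y$, which forces $m_{v_1}=m_{v_2}$ and hence $v_1=v_2$. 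So no two distinct out-edges of $u$ share a label.

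Next I would verify simplicity. Suppose there are two edges from $u$ to $v$ in $\DGM$, carrying labels $\ell_1$ and $\ell_2$. Each corresponds to a colour-$u$ edge of $G$ from $(\ell_i)_X$ to the same vertex $m_v \cap Y$. This gives two colour-$u$ edges of $G$ incident to the single vertex $m_v \cap Y$; by proper edge-colouring they must coincide, and in particular their $X$-endpoints agree, so $(\ell_1)_X = (\ell_2)_X$. Since the map $\ell \mapsto (\ell)_X$ is injective on $X_0 \cup C_M$ (a non-vertex label $x \in X_0$ maps to $\{x\}$, while a colour $c \in C_M$ maps to the $X$-endpoint of $m_c$, and $M$ is a matching), we conclude $\ell_1 = \ell_2$, so there is only one edge from $u$ to $v$.

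Both steps are one-line deductions from proper colouring plus the matching property of $M$, so there is no real obstacle; the only thing worth being careful about is recording explicitly that $\ell \mapsto (\ell)_X$ is injective, which is noted just before the lemma statement ($|(L)_X|=|L|$), and that $m_c \mapsto m_c \cap Y$ is injective on $M$ (immediate from $M$ being a matching).
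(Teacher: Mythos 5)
Your proof is correct and follows essentially the same route as the paper's: in both parts you translate the two offending edges of $\DGM$ back to colour-$u$ edges of $G$ sharing a vertex and invoke proper colouring. The paper's version is terser (it jumps straight to the contradiction), while you fill in the two small injectivity facts the paper leaves implicit — that $M$ being a matching makes $m_c \mapsto m_c\cap Y$ injective, and that $\ell \mapsto (\ell)_X$ is injective on $X_0 \cup C_M$ — which is a reasonable amount of extra care but not a different argument.
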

\begin{proof}
Suppose that $uv$ and $uv'$ are two distinct edges of $\DGM$ with the same label $\ell$.
By definition of $\DGM$ they correspond to two  edges of the form $(\ell)_Xy$ and $(\ell)_Xy'$ of $G$ having colour $u$, where $y=m_v\cap Y$ and $y'=m_{v'}\cap Y$. But this contradicts the colouring of $G$ being proper.

Suppose that $\DGM$ is not simple i.e. an edge $uv$ occurs twice with different labels $\ell$ and $\ell'$. This corresponds to two edges of the form $(\ell)_Xy$ and $(\ell')_Xy$ of $G$ having the same colour $u$ (where $y =m_v\cap Y$.) But this contradicts the colouring of $G$ being proper.
\end{proof}

We now come to the central objects of study in this paper---switching paths. Switching paths in a labelled digraph $D$ are rainbow paths which have a kind of ``consistency'' property for the edges they contain which are labelled by vertices of $D$.
\begin{definition}[Switching path]
A path $P=(p_0, \dots, p_d)$ in an edge-labelled, directed graph $D$ is a switching path if the following hold.
\begin{itemize}
\item $P$ is rainbow i.e. the edges of $P$ have different labels.
\item If $p_{i}p_{i+1}$ is labelled by a vertex $v\in V(D)$, then $v=p_j$ for some $1\leq j\leq i$.
\end{itemize}
\end{definition}

Another key definition in this paper is of a label being \emph{amidst} two vertices. 
\begin{definition}[Amidst]\label{DefinitionAmidst}
Let $u$ and $v$ be two vertices in an edge-labelled, directed graph $D$, and $c$ a label. We say that $c$ is amidst $u$ and $v$ if there is a switching path $P=(u,p_1,\dots, p_{d},v)$ from $u$ to $v$ such that the following hold.
\begin{itemize}
\item There are no edges of $P$ labelled by $c$.
\item If $c$ is a vertex of $D$ then   $c\in \{p_1, \dots, p_d, v\}$.
\end{itemize}
\end{definition}
If $P$ is a path as in Definition~\ref{DefinitionAmidst}, then we say that $P$ \emph{witnesses} $c$ being amidst $u$ and $v$.
Notice that like in the definition of ``switching path'', in the second part of the definition of ``amidst'' the vertex $c$ is required to be a \emph{non-starting} vertex of $P$. Also notice that if there is a switching path $P$  from $u$ to $v$ with $|P|\geq 2$, then $v$ is amidst $u$ and $v$, as witnessed by $P$.

The following lemma establishes a link between a matching $M$ being maximum in a graph $G$ and the behavior of switching paths in the corresponding digraph $\DGM$. 
\begin{lemma}\label{LemmaSwitchingPathVertexAmidst}
Let $G$ be a properly coloured bipartite graph with parts $X$ and $Y$ and $M$ a maximum rainbow matching in $G$.

Suppose that $M$ misses a colour $c^*$ and $a$ is a  label in $\DGM$ which is amidst $c^*$ and some $v\in V(\DGM)$. Then there is no  colour $v$ edge in $G$ from $(a)_X$ to $Y_0=Y\setminus V(M)$.
\end{lemma}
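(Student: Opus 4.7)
I plan to argue by contradiction. Suppose some colour $v$ edge $e$ of $G$ goes from $(a)_X$ to a vertex $y \in Y_0$; the goal is then to exhibit a rainbow matching of size $|M|+1$, contradicting the maximality of $M$.

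The starting point is the switching path $P = (c^*, p_1, \ldots, p_d, v)$ in $\DGM$ witnessing that $a$ is amidst $c^*$ and $v$. I will apply the soon-to-be-stated Claim~\ref{ClaimSwitchingPathExchangeEdges} (which formalises Exercise~\ref{ExerciseSwitchingPathExchangeEdges}) to $P$, with starting missed colour $c^*$, to produce a rainbow matching $M'$ of the same size as $M$. Explicitly, $M'$ is obtained from $M$ by removing the matching edges $m_{p_1},\dots,m_{p_d},m_v$ and inserting, for each edge $p_i p_{i+1}$ of $P$ (with the convention $p_0=c^*$, $p_{d+1}=v$), the corresponding colour $p_i$ edge $f_i$ of $G$; this $f_i$ has $X$-endpoint $(\ell_i)_X$, where $\ell_i$ is the $\DGM$-label of $p_i p_{i+1}$, and $Y$-endpoint $m_{p_{i+1}}\cap Y$. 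A short bookkeeping check shows that the colour set of $M'$ is exactly $(C_M\setminus\{v\})\cup\{c^*\}$, so $v \notin C_{M'}$ and the colour of $e$ is missing from $M'$.

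The remaining task is to verify that $e$ is vertex-disjoint from $M'$, and this is where I expect the main obstacle. The $Y$-side is immediate: each inserted $f_i$ reuses the $Y$-endpoint of the removed edge $m_{p_{i+1}}$, so $V(M')\cap Y = V(M)\cap Y$ and $y\in Y_0$ is still unused. For the $X$-side I need $(a)_X \notin V(M')\cap X$, and I plan to split on whether $a\in X_0$ or $a\in C_M$, using both clauses of the amidst definition in each case. If $a \in X_0$, then $(a)_X = a$ is free in $M$ to begin with, and no $f_i$ can occupy $a$: for $\ell_i \in X_0$, $\ell_i=a$ is ruled out by the clause ``no edge of $P$ is labelled by $a$''; for $\ell_i \in C_M$, $(\ell_i)_X \in V(M)$ cannot equal $a\in X_0$. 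If instead $a \in C_M$, then the second clause of the amidst definition gives $a \in \{p_1,\dots,p_d,v\}$, so $m_a$ is one of the removed edges and $(a)_X$ is freed during the switch; and no $f_i$ can reoccupy it, because $(\ell_i)_X = (a)_X \in V(M)$ forces $\ell_i = a$ (two distinct matching edges cannot share an $X$-vertex), contradicting the ``no label $a$'' clause again.

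With these checks in place, $M' + e$ is a rainbow matching of size $|M|+1$, the desired contradiction. The only real subtlety is the $X$-side case analysis above, which must carefully reconcile the two kinds of $\DGM$-labels (elements of $X_0$ versus elements of $C_M$) with both halves of the definition of amidstness; the rest is routine tracking of which vertices and colours are freed and reused by the switching operation.
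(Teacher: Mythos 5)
Your proposal is correct and follows essentially the same approach as the paper: contradiction via the switching path witnessing amidstness, invoking Claim~\ref{ClaimSwitchingPathExchangeEdges} to produce the exchanged matching $M'$ missing colour $v$, and then a case split on $a\in X_0$ versus $a\in C_M$ (using both clauses of the amidst definition together with injectivity of $(\cdot)_X$) to verify $(a)_X\notin V(M')$. The paper organizes the $X$-side check slightly differently—first ruling out $(a)_X\in\{x_0,\dots,x_{k-1}\}$ in a single stroke via the ``no edges labelled $a$'' clause, then case-splitting only for $M\setminus\{m_1,\dots,m_k\}$—but the underlying verifications are the same.
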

\begin{proof}
Suppose for the sake of contradiction that a colour $v$ edge $(a)_Xy$ exists for $y\in Y_0$.
Let $P$ be a switching path witnessing $a$ being amidst $c^*$ and $v$. Let $p_0, p_1, \dots, p_k$ be the vertex sequence of $P$ with $p_0=c^*$ and $p_k=v$. For $1\leq i\leq k$, let $m_i$ be the edge of $M$ with colour $p_i$. Such edges exist since the in-degree of $p_i$ is positive for $i\geq 1$.
For $0\leq i\leq k-1$ let $\ell_i$ be the label of $p_{i}p_{i+1}$ and define $x_i=(\ell_i)_X$.
For $0\leq i\leq k-1$ let $e_i$ be the edge of $G$ corresponding to the edge $p_{i}p_{i+1}$ of $\DGM$ i.e. $e_i$ is the colour $p_{i}$ edge going from $x_i$ to $m_{i+1}\cap Y$.
\begin{claim}\label{ClaimSwitchingPathExchangeEdges}
Let $M'=M\cup \{e_0, \dots, e_{k-1}\}\setminus\{m_1, \dots, m_k\}$. Then $M'$ is a rainbow matching in $G$ of size $|M|$ missing the colour $v$.
\end{claim}
\begin{proof}
First we show that $M'$ is a rainbow set of edges missing the colour $v$. Notice that for each $i\geq 1$, $e_i$ and $m_i$ both have colour $p_i$. 
Also the edge $e_0$ has colour $p_0=c^*$.
Since $M$ is rainbow and missed colour $c^*$, $M\setminus\{m_1, \dots, m_k\}$ is rainbow and misses the colours $c^*$, $p_1, \dots, p_k$. Therefore $M'$ is rainbow and misses  colour $p_k=v$.

It remains to show that $M'$ is a matching. Notice that $M\setminus\{m_1, \dots, m_k\}$ is a matching as a consequence of $M$ being a matching.

Next we show that $\{e_0, \dots, e_{k-1}\}$ is a matching.
Since $P$ is a switching path, its edges have different labels, which is equivalent to the vertices $x_0, \dots, x_{k-1}$ being distinct. Also, since $P$ is a path, the vertices $p_1, \dots, p_k$ are distinct which implies that the edges $m_1, \dots, m_k$ are also distinct. Since $e_i$ goes from $x_i$ to $m_{i+1}\cap Y$, these imply that for distinct $i$ and $j$ we have $e_i\cap e_j=\emptyset$.

Finally we show that $e_i\cap m=\emptyset$ for $0\leq i\leq k-1$ and $m\in  M\setminus\{m_1, \dots, m_k\}$. Suppose that $e_i\cap m\cap Y\neq \emptyset$. Then since $e_i\cap Y=m_{i+1}\cap Y$, we have $m=m_{i+1}$ which contradicts $m\in  M\setminus\{m_1, \dots, m_k\}$.  
Suppose that $e_i\cap m\cap X\neq \emptyset$, or equivalently $e_i\cap m\cap X=\{x_i\}$. Then $x_i\in V(M)$ which is equivalent to $\ell_i\in V(\DGM)$. In particular we find out that $\ell_i$ is a colour in $G$ (rather than a vertex of $X_0$.) Recall that $\ell_i$ is the label of the edge $p_ip_{i+1}$ of $P$. Using the definition of $P$ being a switching path, we get that $\ell_i=p_j$ for some $1\leq j\leq i$. Then $m_j$ is the colour $\ell_i$ edge of $M$ which gives $x_i=(\ell_i)_X\in m_j\cap X$. This implies that $m=m_j$ which contradicts $m\in M\setminus\{m_1, \dots, m_k\}$.
\end{proof}

We claim that $(a)_X\not\in V(M')$.
Since $a$ is amidst $c^*$ and $v$, we have that $a$ doesn't appear on edges of $P$, which implies $(a)_X\neq x_i$ for $0\leq i\leq k-1$. This shows that  $(a)_X$ is disjoint from $e_0, \dots, e_{k-1}$.
If $a\not\in C_M$,  then we  have $a\in X_0$ and so  $(a)_X\not\in V(M)$ which gives  $(a)_X\not\in V(M')$.
If $a\in C_M$, then since $a$ is amidst $c^*$ and $v$, we have that $a=p_i$ for some $1\leq i\leq k$. 
This gives $(a)_X=m_i\cap X$ which implies that $(a)_X\not\in V(M\setminus\{m_1, \dots, m_k\})$  and hence $(a)_X\not\in V(M')$.

Now we have that neither of the vertices $(a)_X$ or $y$ are in $M'$, and also $M'$ misses colour $v$. Thus $M'+(a)_Xy$ is a rainbow matching of size $|M'|+1$ contradicting the maximality of $M$.
\end{proof}

For a set of labels $L$ in a labelled digraph $D$, define $$N_{L}^+(v)=\{w\in N^+(v): vw \text{ is labelled by some } \ell\in L\}.$$ Notice that from the definition of $\DGM$, we have that $|N_{L}^+(v)|$ is exactly the number of colour $v$ edges in $G$ going from $(L\setminus \{v\})_X$ to $Y\cap V(M)$.

The following corollary of the above lemma shows that if we have a graph $G$ with a maximum matching missing some colour, then the corresponding digraph $\DGM$ satisfies a  degree condition. 
\begin{lemma}\label{LemmaMaximumMatchingExpansion}
Let $G$ be a properly coloured bipartite graph with parts $X$ and $Y$, $M$ a maximum rainbow matching in $G$, and $X_0=X\setminus V(M)$.

Suppose that $M$ misses a colour $c^*$, $v$ is a colour in $G$ with $|M|+k$ edges, and $A$ is a set of labels in $\DGM$ which are amidst $c^*$ and $v$. Then $|N^+_A(v)|\geq |A|-|X_0|+k-1$.
\end{lemma}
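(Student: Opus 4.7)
The plan is to translate $|N^+_A(v)|$ into a count of colour $v$ edges of $G$, then combine Lemma~\ref{LemmaSwitchingPathVertexAmidst} with properness of the edge-colouring. Recall (from the sentence just before the lemma statement) that $|N^+_A(v)|$ equals the number of colour $v$ edges in $G$ going from $(A\setminus\{v\})_X$ to $Y\cap V(M)$; so the whole task is to produce a lower bound on this count.

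First I would apply Lemma~\ref{LemmaSwitchingPathVertexAmidst} once to each $a\in A$. Every such $a$ is amidst $c^*$ and $v$, so the lemma gives that no colour $v$ edge of $G$ leaves $(a)_X$ toward $Y_0$. Since $(A\setminus\{v\})_X\subseteq(A)_X$, this means that every colour $v$ edge whose $X$-endpoint lies in $(A\setminus\{v\})_X$ has its $Y$-endpoint automatically in $Y\cap V(M)$. Hence $|N^+_A(v)|$ is exactly the number of colour $v$ edges of $G$ with $X$-endpoint in $(A\setminus\{v\})_X$, with the ``to $Y\cap V(M)$'' clause now redundant.

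Second I would do a complement count. Colour $v$ has $|M|+k$ edges in $G$ by hypothesis, and by properness these edges have $|M|+k$ pairwise distinct $X$-endpoints; hence the number of them with $X$-endpoint \emph{outside} $(A\setminus\{v\})_X$ is at most $|X|-|(A\setminus\{v\})_X|$. Now the map $\ell\mapsto (\ell)_X$ is a bijection from $X_0\cup C_M$ to $X$ (it is the identity on $X_0$ and sends $c\in C_M$ bijectively onto the unique vertex of $m_c\cap X$), so $|(A\setminus\{v\})_X|=|A\setminus\{v\}|\geq |A|-1$; and $|X|=|X_0|+|M|$ because $M$ is a matching. Combining,
\[
|N^+_A(v)|\;\geq\;(|M|+k)-\bigl(|X_0|+|M|-|A|+1\bigr)\;=\;|A|-|X_0|+k-1,
\]
which is the claimed bound.

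There is no real obstacle: the substantive ingredient (ruling out colour $v$ edges to $Y_0$) is already packaged in Lemma~\ref{LemmaSwitchingPathVertexAmidst}, and what remains is bookkeeping about sizes. The only mild subtlety is the ``$-1$'' in the conclusion, which arises precisely from the possibility $v\in A\cap C_M$; in that case one loses the index $(v)_X$ from $(A\setminus\{v\})_X$, and when $v\notin A$ the same argument in fact gives the slightly stronger bound $|A|-|X_0|+k$.
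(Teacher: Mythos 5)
Your proof is correct and is essentially the same argument as the paper's: both rest on the identity that $|N^+_A(v)|$ counts colour $v$ edges from $(A\setminus\{v\})_X$ to $Y\cap V(M)$, the count of colour $v$ edges touching $(A\setminus\{v\})_X$ via properness and $|(A)_X|=|A|$, and Lemma~\ref{LemmaSwitchingPathVertexAmidst} to rule out colour $v$ edges into $Y_0$. The paper simply runs the argument by contradiction (assume the bound fails, locate a colour $v$ edge into $Y_0$, contradict the lemma), whereas you apply the lemma up front and then count directly; this is a cosmetic reordering, not a different method.
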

\begin{proof}
Suppose for the sake of contradiction that $|N^+_A(v)|<|A|-|X_0|+k-1$.
Since $|(A)_X|=|A|$, there are exactly $|M|+|X_0|-|A|$ vertices in $X$ outside $(A)_X$. The number of  colour $v$ edges touching $(A\setminus\{v\})_X$ in $G$ is  $\geq |M|+k-|X\setminus (A\setminus\{v\})_X|\geq |M|+k-|X\setminus (A)_X|-1 = |A|-|X_0|+k-1>|N^+_A(v)|$. Since $|N^+_A(v)|$ equals the number of colour $v$ edges between $(A\setminus \{v\})_X$ and $Y\cap V(M)$ we obtain that there is a colour $v$ edge from some $(a)_X\in (A\setminus \{v\})_X$ to $y\in Y_0=Y\setminus V(M)$.
But, by definition of $A$ we have $a$ amidst $c^*$ and $v$, contradicting Lemma~\ref{LemmaSwitchingPathVertexAmidst}.
\end{proof}
The above lemma produces a directed graph with a degree condition. In the remainder of the paper we show that this degree condition is almost too strong to hold.  We show that if $k$ is linear in $|G|$ for every vertex $v$  and $|G|$ is sufficiently large, then no digraphs satisfying the conclusion of Lemma~\ref{LemmaMaximumMatchingExpansion} exists. This is equivalent to there being no graphs with a maximum matching $M$ satisfying the assumptions of Lemma~\ref{LemmaMaximumMatchingExpansion} i.e. we obtain that any maximum matching in such a graph must use every colour.

\section{Connectivity of labelled, directed graphs}\label{SectionDirectedGraphs}
The goal of this section is to prove the following theorem. Together with Lemmas~\ref{LemmaProperlyColoured} and~\ref{LemmaMaximumMatchingExpansion}, it immediately implies Theorem~\ref{MainTheorem}.
\begin{theorem}\label{TheoremDigraph}
For all $\epsilon$ with $0<\epsilon\leq 0.9$, there is a $N_0=N_0(\epsilon)$ such that the following holds. Let $D$ be any out-properly edge-labelled, simple, directed graph on $n\geq N_0$ vertices. Let $X_0$ be the set of labels which are not vertices of $D$

Then for all $u\in V(D)$, there is a vertex $v$ and a set of labels $A$ amidst $u$ and $v$, such that $|N^+_A(v)|<|A|-|X_0|+\epsilon  n$.
\end{theorem}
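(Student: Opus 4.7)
The plan is to argue by contradiction. Fix $u\in V(D)$ such that for every vertex $v$ and every label set $A$ amidst $u$ and $v$ one has $|N^+_A(v)|\geq |A|-|X_0|+\epsilon n$. Under this hypothesis I will construct a chain of labelled-digraph components $C_0,C_1,\ldots,C_m$, each reached from $u$ by a short switching path, whose sizes grow by at least $(\epsilon-o(1))n$ at each step. Taking $m\gg \epsilon^{-1}$ then contradicts $|C_m|\leq n$. As suggested in the proof sketch, a single scalar parameter $\lam$ will control the seven-tuple $(k,d,\Delta,\gamma,\hat k,\hat d,\hat\Delta)$ appearing in the definitions of $\lam$-reaching and $\lam$-component, with the reaching parameters weakening and the bypassing parameters tightening as $i$ grows.

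The first task is to build the chain. I would establish basic properties of reaching --- monotonicity, closure under unions, and a transitivity property saying that if $v$ reaches a large set $R$ and every $w\in R$ reaches $R'$, then $v$ reaches $R'$ with only mildly degraded parameters. Next, for each vertex $v$ I would iteratively absorb vertices into a reached set $R_v$ until its complement is genuinely bypassed, producing a reached/bypassed partition. Choosing $v$ so that $|R_v|$ is approximately minimal among vertices reachable from $u$, transitivity forces $|R_w\symdiff R_v|$ to be small for most $w\in R_v$; pruning a few exceptional vertices then gives a $\lam$-component $C_0$. Iterating --- each time feeding the previous component into the component-growth lemma below to produce a strictly larger reached set --- yields $C_0,C_1,\ldots,C_m$ with $C_i\cap C_{i+1}\neq\emptyset$ and switching paths from $u$ to each $C_i$.

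The core step, and the main obstacle, is the component-growth lemma itself: given a $\lam$-component $C$ reached from $u$ by a short switching path, either every $v'\in C$ $\lam'$-reaches a set $R$ with $|R|\geq |C|+(\epsilon-o(1))n$, or the conclusion of Theorem~\ref{TheoremDigraph} already holds at some $v'\in C$. I would first prove a Menger-type statement that at least $|C|^3-(\epsilon n)^3$ triples $(u',c',v')\in C^3$ have $c'$ amidst $u'$ and $v'$; this uses that inside a component almost every vertex reaches almost all of $C$, so switching paths can be rerouted to avoid few forbidden labels and to pass through any prescribed intermediate vertex. Let $R$ be the set of $z\in V(D)$ hit by the final edge of many such amidstness-witnessing paths; an averaging argument shows every $v'\in C$ $\lam'$-reaches $R$. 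The bound $|R|\geq |C|+(\epsilon-o(1))n$ then follows by applying the contradiction hypothesis $|N^+_A(v')|\geq |A|-|X_0|+\epsilon n$ with $A$ a large subset of $C\cup X_0$ consisting of labels amidst $u'$ and $v'$ for a positive fraction of $u'\in C$; the resulting out-neighbours of $v'$ populate $R$ and force it to exceed $|C|$ by essentially $\epsilon n$. The difficulty is the bookkeeping --- the parameter degradation $\lam\to\lam'$, the exceptional $(\epsilon n)^3$ triples, and the need to exhibit a single large $A$ amidst one specific pair $(u',v')$ --- and it is here that the failure of in-proper labelling bites: classical connectivity breaks down because deleting one label can isolate a vertex, so amidstness, which is tolerant of this defect, must be used in its place.

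Combining the two preceding steps, $|C_{i+1}|\geq |C_i|+(\epsilon-o(1))n$ for every $i<m$, so $|C_m|\geq m(\epsilon-o(1))n$. Choosing $m$ much larger than $\epsilon^{-1}$ gives $|C_m|>n$, contradicting $C_m\subseteq V(D)$. Hence no counterexample $u$ can exist, proving Theorem~\ref{TheoremDigraph}; together with Lemmas~\ref{LemmaProperlyColoured} and~\ref{LemmaMaximumMatchingExpansion} this yields Theorem~\ref{MainTheorem}.
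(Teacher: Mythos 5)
Your sketch tracks the paper's strategy closely: the tower of $\lam$-components via transitivity-based minimization and pruning, the Menger-type count of amidst triples inside a component, the averaging argument that extracts the reached set $R$, and the use of the contradiction hypothesis to force $|R|\geq |C|+(\epsilon-o(1))n$. All of these appear in the paper as, respectively, Lemmas~\ref{Transitivity}, \ref{ComponentExistence}, \ref{ComponentTower}, \ref{MengerLemma}, and \ref{ComponentGrowth}.

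There is one subtle gap in the way you describe the iteration. You propose ``each time feeding the previous component into the component-growth lemma\dots to produce a strictly larger reached set --- yields $C_0,C_1,\dots,C_m$,'' i.e.\ you would build $C_{i+1}$ from the growth lemma's output $R$. But $R$ is merely a \emph{reached} set, not a component, and applying Lemma~\ref{ComponentExistence} to $R$ does not automatically return a component that contains most of $R$ (component existence locates a component inside the non-bypassed region, which has no a priori relation to $R$). The paper resolves this by constructing the whole chain $C_1,\dots,C_m$ \emph{unconditionally and in advance} (Lemma~\ref{ComponentTower}), with $\lam_i$ chosen so that $\lam_i+1\leq\lam_{i+1}-3$; the size bound $|C_{i+1}|\geq |C_i|+(\epsilon-o(1))n$ then follows not by converting $R$ into a component, but by intersecting: the vertex $v\in C_i\cap C_{i+1}$ $(\lam_i+1)$-reaches $R$ and simultaneously $(\lam_{i+1}-3)$-bypasses $\overline{R_v}$ (from the definition of $C_{i+1}$ being a $\lam_{i+1}$-component), and since $\lam_i+1\leq\lam_{i+1}-3$ this forces $|R\setminus R_v|\leq\gamma_{\lam_{i+1}-3}n$, hence $|C_{i+1}|\geq |R_v|-\delta n\geq |R|-o(n)$. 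Without the pre-built tower providing the future component's bypassing set, there is nothing to squeeze $R$ into, so the step as you phrased it would fail; otherwise the plan is sound.
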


Throughout this section, for a set of vertices $S$ in a graph $D$ we denote the vertex-complement of $S$ by $\overline{S}=V(D)\setminus S$.

For a path $P$,  define a corresponding set of labels $\underline{P}$ consisting of labels which are either vertices of $P$ or labels of edges of $P$. Formally $\underline P=V(P)\cup \{\ell: \text{$\ell$ is the label of some $e\in E(P)$}\}$ denotes the set of labels consisting  of $V(P)$ together with the set of labels of edges of $P$.
For a path of length $d$, we will often use the bound $|\underline{P}|\leq 2d+1\leq 3d$.
For a set of labels $S$ and a path $P$ starting at a vertex $v$, we say that $P$ \emph{avoids} $S$ if $S\cap \underline{P}\subseteq \{v\}$ i.e. $P$ has no edges labelled by elements of $S$ and $P$ has no vertices in $S$ \emph{except possibly the starting vertex $v$}.
 
 The condition that $S$ is allowed to contain the starting vertex of $P$ in the definition of ``avoids'' may seem strange. 
We have this condition since it makes many of the arguments in this paper neater. 
 In particular, it allows us to cleanly concatenate switching paths with the following lemma.
 \begin{lemma}\label{LemmaConcatenatePaths}
Let $P=(p_0,p_1, \dots, p_t)$  and $Q=(p_t,p_{t+1}, \dots, p_s)$ be two switching paths in a labelled digraph $D$. If $Q$ avoids $\underline P$ then $P+Q=(p_1, p_2, \dots, p_s)$ is also a switching path. 
 \end{lemma}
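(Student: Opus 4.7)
The plan is to verify the three things that need to hold for $P+Q$ to be a switching path: $(a)$ it is actually a (simple) directed path, $(b)$ its edges carry distinct labels, and $(c)$ for every edge whose label is a vertex $v \in V(D)$, that vertex occurs strictly before (but not at position $0$) in $P+Q$. Each of these will follow almost immediately from the hypothesis that $Q$ avoids $\underline{P}$, combined with the fact that $P$ and $Q$ are individually switching paths.

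For $(a)$, I would check that no vertex is repeated along $P+Q$. Internal repeats within $P$ or within $Q$ are ruled out because $P$ and $Q$ are themselves paths. The only remaining possibility is a vertex common to $V(P)$ and $V(Q)$ other than the shared endpoint $p_t$; but since $V(P) \subseteq \underline{P}$ and $Q$ avoids $\underline{P}$, the only vertex of $Q$ that may lie in $\underline{P}$ is its starting vertex $p_t$. Hence $V(P) \cap V(Q) = \{p_t\}$ and $P+Q$ is a path.

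For $(b)$, the edges of $P$ have distinct labels (since $P$ is a switching path) and likewise for $Q$. A label shared between an edge of $P$ and an edge of $Q$ would mean some edge of $Q$ carries a label in $\underline{P}$, contradicting that $Q$ avoids $\underline{P}$. For $(c)$, consider an edge $p_i p_{i+1}$ in $P+Q$ labelled by some $v \in V(D)$. If $i < t$, this is an edge of $P$, so by the switching-path property of $P$ we already have $v = p_j$ for some $1 \le j \le i$, which is also valid in the indexing of $P+Q$. If $i \ge t$, the edge lies in $Q$, whose vertices in its own indexing are $p_t, p_{t+1}, \dots, p_s$ with starting index $0$ at $p_t$. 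The switching-path property of $Q$ gives $v = p_{t+j'}$ for some $1 \le j' \le i - t$, so in the global indexing $v = p_j$ with $t+1 \le j \le i$, which satisfies $1 \le j \le i$.

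I do not expect any serious obstacle here; the lemma is essentially a bookkeeping statement unpacking the definitions of \emph{switching path} and \emph{avoids}. The only mildly delicate point is remembering that the avoidance relation permits $Q$ to share its starting vertex $p_t$ with $\underline{P}$, which is exactly why the concatenation is well-defined as a path and why the case $j = t$ does not cause trouble in the verification of condition $(c)$.
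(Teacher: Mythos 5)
Your proof is correct and follows essentially the same route as the paper's: for the rainbow condition you note that edges of $Q$ cannot carry labels in $\underline{P}$, and for the second condition you split into the cases of the edge lying in $P$ or in $Q$. The paper's proof is terser — it omits your check that $P+Q$ is a simple path — but your extra verification that $V(P)\cap V(Q)=\{p_t\}$ via $\underline{P}\cap\underline{Q}\subseteq\{p_t\}$ is a correct (and arguably needed) addition, not a deviation.
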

\begin{proof}
To see that $P+Q$ is rainbow, notice that $P$ and $Q$ are rainbow and that $Q$ shares no edge-labels with $P$ since $Q$ avoids $\underline P$.
To see the second part of the definition of $P+Q$ being a switching path notice that if $p_ip_{i+1}$ is labelled by $v\in V(D)$,  then depending on whether $v\in P$ or $v\in Q$ we have $v=p_j$ for $1\leq j\leq i$  or  $v=p_j$ for $t+1\leq j\leq i$.
\end{proof}
Another consequence of the definition of ``avoids'' is that for any set of labels $S$, a single vertex path $P=v$ is a path from $v$ to $v$ avoiding $S$.

The proof of Theorem~\ref{TheoremDigraph} involves lots of constants. 
The first constant which we use is $\epsilon $ which is the constant given to us by Theorem~\ref{TheoremDigraph}. Throughout the section it is best to fix $\epsilon$ with $0<\epsilon\leq 0.9$, and to read everything that we do as a proof of Theorem~\ref{TheoremDigraph} for that particular $\epsilon $.

Next we introduce three numbers $N_0$, $\lam_\MAX$ and $\delta$ depending on $\epsilon$ whose relationship is $N_0^{-1}\ll \lam_\MAX^{-1}\ll \delta\ll \epsilon$.
The number $N_0$ will be the $N_0$ in Theorem~\ref{TheoremDigraph}, while  $\lam_\MAX$ and $\delta$ are just two numbers with no special meaning.
We set $\lam_\MAX=4^{\epsilon ^{-9}}$ and $\delta=\epsilon ^{3}$.
For an integer $x$ let $\twr(x)=\lam_\MAX^{\lam_\MAX^{\iddots^{\lam_\MAX}}}$ be the  tower function, where there are $x$ exponentiations. Set $N_0=\twr(2\lam_\MAX)$. 

Next for any $\lam\in \mathbb{N}$, we define four numbers $d_\lam, k_\lam, \Delta_\lam$, and $\gamma_\lam$. These numbers will control the variables in our definition of connectedness and will allow us to define ``reaching'' and ``components'' using just one parameter (rather than using four as in Section~\ref{SectionProofSketchConnectedness}.)
The specific definitions of $d_\lam, k_\lam, \Delta_\lam$, and $\gamma_\lam$ are not too important---the intuition is that for any $\lam  \in [1,\lam_{\MAX}]$ we have
\begin{align*}
\lam_\MAX\ll &d_{\lam} \ll k_{\lam}\ll \Delta_{\lam}\ll \gamma_{\lam}^{-1}\ll N_0,\\
d_{\lam+1}&\gg d_\lam,\hspace{1cm}
k_{\lam+1}\ll k_\lam,\\
\Delta_{\lam+1}&\gg \Delta_\lam,\hspace{0.9cm}
\gamma_{\lam+1}\ll \gamma_\lam.\\
\end{align*}
Notice that some sort of upper bound on $\lam$ is necessary for all the above to hold since $d_{\lam+1}\gg d_\lam,
$ $k_{\lam+1}\ll k_\lam,$ and  $d_{\lam} \ll k_\lam$ cannot simultaneously hold for all $\lam\in \mathbb{N}$.
Because of this, in all our lemmas we will make sure that $\lam$ is in the range $1\leq \lam\leq  \lam_{\MAX}$.

For specific $d_\lam, k_\lam, \Delta_\lam$, and $\gamma_\lam$ with which our proofs work, define
\begin{align*}
d_\lam&=\lam_\MAX\cdot 4^{\lam},\\
k_\lam&=\lam_\MAX(4^{\lam_\MAX^{4}}- 4^{\lam_\MAX \cdot\lam}),\\
\Delta_\lam&=\twr(\lam),\\
\gamma_\lam&=\twr(\lam+2)^{-1}.\\
\end{align*}

To prove Theorem~\ref{TheoremDigraph} we will need a careful understanding of the switching paths in a labelled digraph. We will study switching paths via a new notion of connectedness which we now introduce. 
The following is the heart of the notion of connectedness that we study.
\begin{definition}[$\lam$-reaching]
For a vertex $v$ in a labelled digraph $D$ and a set $R\subseteq V(D)$, we say that $v$ $\lam$-reaches $R$ if for any set $S$ of $\leq k_\lam$ labels, there are length $\leq d_\lam$ switching paths avoiding $S$ to all, except possibly at most $\Delta_\lam$, vertices $x\in R$.
\end{definition}

Notice that $v$ $\lam$-reaches a set $R$ exactly when it $(k_\lam, d_\lam, \Delta_\lam)$-reaches $R$, as defined in Section~\ref{SectionProofSketchConnectedness}.
To complement the notion of ``reaching'' we introduce a notion of ``bypassing''. Informally a set $B$ is bypassed by a vertex $v$ if $v$ doesn't reach anything big inside $B$.
\begin{definition}[$\lam$-bypassing]
For a vertex $v$ in a labelled digraph $D$ and a set $B\subseteq V(D)$, we say that $v$ $\lam$-bypasses $B$ if $v$ doesn't $\lam$-reach any $R\subseteq B$ with $|R|\geq \gamma_\lam |D|$.
\end{definition}

The third key definition is that of a  $\lam$-component. Recall that when studying ordinary undirected graphs a connected component $C$ is a set where every pair $x,y\in C$ is connected by a path, and no pair $x\in C, z\not\in C$ is connected by a path. Intuitively a $\lam$-component is similar to this, with ``every pair'' replaced by ``almost every pair'' and ``no pair'' replaced by ``almost no pair''.
\begin{definition}[$\lam$-component]
A set of vertices $C$ in a labelled digraph $D$ is a $\lam$-component if for any vertex $v\in C$, there is  a set $R_v\subseteq V(D)$ with $|R_v\symdiff C|\leq \delta n$ such that the following hold.
\begin{enumerate}[(i)]
\item $v$ $\lambda$-reaches $R_v$. 
\item $v$ $(\lambda-3)$-bypasses $\overline{R_v}$.
\end{enumerate}
\end{definition}
Notice that for a labelled digraph $D$ it is far from clear that $\lam$-components exist in $D$. Section~\ref{SectionComponentExistence} will be devoted to proving that every properly labelled digraph $D$, has $\lam$-components for suitable $\lam$.

\subsection{Basic properties}\label{SectionBasicProperties}
Here we establish many basic properties of $\lam$-reaching, $\lam$-bypassing, and $\lam$-components.
The first property is that reaching or bypassing a set $W$ is preserved by passing to a subset of $W$, and by moving $\lam$ in a suitable direction.
\begin{observation}\label{ObservationHereditary}
Let $D$ be a labelled digraph, $v\in V(D)$, $R, B\subseteq V(D)$, and $\lam\in \mathbb{N}$.
\begin{enumerate}[(i)]
\item \textbf{Monotonicity of reaching:} Let $\lam^+\geq \lam$ and $R^-\subseteq R$. Then $v$ $\lam$-reaches $R$ $\implies$ $v$ $\lam^+$-reaches $R^-$.
\item \textbf{Monotonicity of bypassing:} Let $\lam^-\leq \lam$  and $B^-\subseteq B$. Then $v$ $\lam$-bypasses $B$ $\implies$ $v$ $\lam^-$-bypasses $B^-$.
\end{enumerate}
\end{observation}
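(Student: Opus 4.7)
My plan is to deduce both parts directly from the definitions, using the monotonicity of the four parameter functions $d_\lam, k_\lam, \Delta_\lam, \gamma_\lam$ in $\lam$. From their explicit formulas, $d_\lam$ and $\Delta_\lam$ are increasing in $\lam$ while $k_\lam$ and $\gamma_\lam$ are decreasing in $\lam$; this is the only quantitative input I need.

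For part (i), I would start from the hypothesis that $v$ $\lam$-reaches $R$ and verify the definition of $\lam^+$-reaching $R^-$ directly. Given any set $S$ of at most $k_{\lam^+}$ labels, the inequality $k_{\lam^+}\leq k_\lam$ lets me apply the hypothesis to $S$: there exist switching paths of length $\leq d_\lam$ avoiding $S$ from $v$ to all but at most $\Delta_\lam$ vertices of $R$. Since $d_\lam\leq d_{\lam^+}$ these same paths have length $\leq d_{\lam^+}$, and since $\Delta_\lam\leq \Delta_{\lam^+}$ the number of unreached vertices of $R$ is at most $\Delta_{\lam^+}$. Restricting attention to $R^-\subseteq R$ can only decrease the number of unreached vertices, so at most $\Delta_{\lam^+}$ vertices of $R^-$ are unreached as well, completing the verification.

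For part (ii), I would argue by contrapositive, using part (i) plus monotonicity of $\gamma_\lam$. Suppose $v$ does \emph{not} $\lam^-$-bypass $B^-$, i.e.\ there exists $R\subseteq B^-$ with $|R|\geq \gamma_{\lam^-}n$ that $v$ $\lam^-$-reaches. Applying part (i) with the roles $\lam\mapsto \lam^-$ and $\lam^+\mapsto \lam$ (and $R^-=R$), we get that $v$ also $\lam$-reaches $R$. Since $R\subseteq B^-\subseteq B$ and $|R|\geq \gamma_{\lam^-}n\geq \gamma_\lam n$ (because $\gamma_\lam\leq \gamma_{\lam^-}$), this contradicts $v$ $\lam$-bypassing $B$.

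There is no real obstacle here; the statement is essentially an unpacking of the definitions, with the only substantive content being that the four parameter functions move in the directions the definitions demand. The main thing to be careful about is matching the direction of each inequality with the direction in which the corresponding parameter varies in $\lam$, and in part (ii) remembering that the contrapositive of (i) is exactly what converts a reaching statement at one parameter to one at a smaller parameter.
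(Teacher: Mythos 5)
Your proof is correct and is essentially the same as the paper's: part (i) is verified directly from the definition using the monotonicity of $k_\lam$, $d_\lam$, $\Delta_\lam$, and part (ii) reduces to part (i) together with the monotonicity of $\gamma_\lam$. The only cosmetic difference is that you phrase part (ii) as a contrapositive, whereas the paper argues directly that any $R\subseteq B^-$ which is $\lam^-$-reached must have $|R|<\gamma_{\lam^-}|D|$; these are the same argument.
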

\begin{proof}
For (i), let $S$ be a set of $k_{\lam^+}$ labels. Since $k_{\lam^+}\leq k_\lam$ and  $v$ $\lam$-reaches $R$, there are length $\leq d_\lam$ switching paths from $v$ to all except at most $\leq \Delta_\lam$ vertices in $R$. Since $d_{\lam^+}\geq d_\lam$ and $\Delta_{\lam^+}\geq \Delta_\lam$, these same paths give length $\leq d_{\lam^+}$ switching paths from $v$ to all except at most $\leq \Delta_{\lam^+}$ vertices in $R^-\subseteq R$.

For (ii), let $R$ be a subset of $B^-$ which is $\lam^-$-reached by $v$. Since $\lam\geq \lam^-$, by part (i) we know that $v$ $\lam$-reaches $R$. Since $R\subseteq B^-\subseteq B$ and $v$ $\lam$-bypasses $B$, we get that $|R|\leq \gamma_\lam |D|$. Since $\gamma_{\lam^-}\geq \gamma_\lam$ we get that $|R|\leq \gamma_{\lam^-}|D|$. Since $R$ was an arbitrary subset of $B^-$ which is $\lam^-$-reached by $v$, we have proved that $v$ $\lam^-$-bypasses $B^-$.
\end{proof}
Since the above observation is extremely fundamental and basic we will not always explicitly refer to it throughout its many applications. 
The next observation provides trivial conditions for sets to be reached or bypassed by a vertex.
\begin{observation}\label{ObservationReachBypassExistence}
Let $D$ be a  labelled digraph, $X_0$ the set of non-vertex labels in $D$, $v\in V(D)$, $R, B\subseteq V(D)$, and $\lam\in \mathbb{N}$.
\begin{enumerate}[(i)]
\item\textbf{Reaching small sets:} $|R|\leq \Delta_\lam$ $\implies$ $v$ $\lam$-reaches $R$.
\item\textbf{Bypassing small sets:} $|B|< \gamma_\lam|D|$ $\implies$ $v$ $\lam$-bypasses $B$.
\item\textbf{Reaching neighborhoods:} If $D$ is out-properly labelled then $v$ $3$-reaches $N^+_{X_0}(v)$.
\end{enumerate}
\end{observation}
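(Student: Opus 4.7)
The plan is to dispatch each part directly from the relevant definition. Parts (i) and (ii) should be essentially vacuous, while part (iii) requires a short argument; I expect no real obstacles, only a bit of bookkeeping.

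First I would handle (i) by appealing to the fact that the definition of ``$v$ $\lam$-reaches $R$'' permits up to $\Delta_\lam$ vertices of $R$ to go unreached for each choice of the forbidden label set $S$. When $|R|\le \Delta_\lam$, declaring all of $R$ to be the exceptional set makes the quantifier hold with no switching paths needed. Next I would observe that (ii) is similarly vacuous: ``$v$ $\lam$-bypasses $B$'' only constrains subsets $R\subseteq B$ of size at least $\gamma_\lam|D|$, and no such subset exists when $|B|<\gamma_\lam|D|$.

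The substantive step is (iii). I would use the family of length-$1$ paths $P_w=(v,w)$ for $w\in N^+_{X_0}(v)$. Each such $P_w$ is automatically rainbow, and its unique edge $vw$ is labelled by an element of $X_0$, which is not a vertex of $D$; hence the second clause in the definition of ``switching path'' is vacuous, and $P_w$ qualifies as a switching path of length $1\le d_3$. Given any set $S$ of at most $k_3$ labels, for $P_w$ to avoid $S$ one needs $w\notin S$ and the label of $vw$ not lying in $S$. The first of these excludes at most $|S|\le k_3$ vertices $w$; the second also excludes at most $k_3$ vertices, because the out-proper labelling of $D$ forces the map $w\mapsto(\text{label of }vw)$ to be injective on $N^+(v)$. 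So at most $2k_3$ vertices of $N^+_{X_0}(v)$ can fail, leaving only the numerical check $2k_3\le \Delta_3$, which holds with huge margin since $\Delta_3=\twr(3)$ is a triple tower in $\lam_\MAX$ while $k_3\le \lam_\MAX\cdot 4^{\lam_\MAX^4}$.

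The only plausible obstacle is in (iii): one must be careful that the out-proper hypothesis genuinely controls both label-collisions and vertex-collisions with $S$ separately, and that the chosen constants satisfy $2k_3\le \Delta_3$. Both points are routine, and the overall proof should fit in a few lines.
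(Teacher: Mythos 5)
Your proposal is correct and follows essentially the same route as the paper for all three parts: (i) and (ii) are dispatched by letting the exceptional set (resp.\ the potential subset $R$) absorb everything, and (iii) uses the length-$1$ switching paths $(v,w)$. The only cosmetic difference is in (iii): you bound the number of bad $w$ by $2k_3$, treating vertex-collisions with $S$ and label-collisions with $S$ as two separate sources of loss, whereas the paper gets the sharper bound $|S|\le k_3$ by observing that these sources are disjoint (the bad labels live in $S\cap X_0$ while the bad vertices live in $S\cap V(D)$, and $X_0\cap V(D)=\emptyset$); both bounds are far below $\Delta_3$, so this makes no difference.
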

\begin{proof}
For (i), we can take the family of paths for the definition of $v$ $\lam$-reaching $R$ to be empty. 
For (ii), notice that every subset $R\subseteq B$ has $|R|\leq |B|< \gamma_\lam|D|$ regardless of whether $R$ is reached by $v$ or not. 

For (iii), notice that $\Delta_3\geq k_3$. Let $S$ be a set of $\leq k_3$ labels. Notice that for every $y\in N^+_{X_0\setminus S}(v)\setminus S$, the edge $vy$ is a length $1\leq d_3$ switching path from $v$ to $y$ avoiding $S$. Since $D$ is properly coloured we have $|N^+_{X_0}(v)\setminus (N^+_{X_0\setminus S}(v)\setminus S)|\leq |S|\leq k_3\leq \Delta_3$, and so we have enough paths for the definition of $v$ $\lam$-reaching  $N^+_{X_0}(v)$.
\end{proof}

Observation~\ref{ObservationHereditary} shows that if $R$ is $\lam$-reached by $v$, we can pass to a subset of $R$ and still have it $\lam$-reached.
We will sometimes want to increase the size of a set $R$ and know that it is still reached by $v$. The following lemma shows that we can add the vertex $v$ itself to $R$ and still know that $R\cup\{v\}$ is reached by $v$ \emph{with the same parameter} $\lam$.
\begin{observation}[Reaching one more vertex]\label{ObservationReachingOneMore}
$v$ $\lam$-reaches $R$ $\implies$ $v$ $\lam$-reaches $R\cup\{v\}$.
\end{observation}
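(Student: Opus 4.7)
The plan is essentially to combine the family of switching paths witnessing that $v$ $\lam$-reaches $R$ with the trivial single-vertex path at $v$, and observe that this covers $R\cup\{v\}$ with the same parameters.

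First I would fix an arbitrary set $S$ of labels with $|S|\leq k_\lam$. By the hypothesis that $v$ $\lam$-reaches $R$, there exists a collection of switching paths from $v$, each of length at most $d_\lam$ and each avoiding $S$, whose endpoints include all vertices of $R$ except possibly a set $E_R\subseteq R$ with $|E_R|\leq \Delta_\lam$.

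Next I would handle the one new target vertex, namely $v$ itself. The key remark (already stated in the paper right after Lemma~\ref{LemmaConcatenatePaths}) is that the one-vertex path $P=(v)$ vacuously avoids every set of labels $S$, and it is trivially a switching path since the two defining conditions of a switching path are vacuous when there are no edges. Moreover it has length $0\leq d_\lam$ and starts and ends at $v$.

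Adjoining this trivial path to the family from the first step yields, for the same $S$, a collection of length $\leq d_\lam$ switching paths avoiding $S$ whose endpoints cover all vertices of $R\cup\{v\}$ except those in $E_R\setminus\{v\}$, a set of size at most $\Delta_\lam$. Since $S$ was arbitrary, this is exactly the definition of $v$ $\lam$-reaching $R\cup\{v\}$. There is no real obstacle here; the only point worth double-checking is that a length-$0$ path does qualify as a switching path and that the conventions for ``avoids'' permit the starting vertex to lie in $S$, both of which are granted by the definitions given earlier in this section.
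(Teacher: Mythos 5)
Your proposal is correct and follows essentially the same route as the paper's own proof: fix an arbitrary set $S$ of at most $k_\lam$ labels, use the hypothesis to obtain the switching paths into $R$, and add the trivial single-vertex path $(v)$ (which qualifies because of the convention that the starting vertex may lie in $S$) to cover $v$ as well. Nothing to change.
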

\begin{proof}
Let $S$ be a set of $\leq k_{\lam}$ labels.
Recall that $\{v\}$ is a length $0\leq d_\lam$ switching path from $v$ to $v$ avoiding $S$ (using the fact that the first vertex of a path is allowed to be in $S$ in the definition of ``avoids''.) Also, since $v$ $\lam$-reaches $R$,  there are length $\leq d_\lam$ switching paths avoiding $S$ to all except at most $\Delta_\lam$ vertices of $R$. These paths, together with $\{v\}$, give the required paths to show that $v$ $\lam$-reaches $R\cup\{v\}$.
\end{proof}
Consider a set $R_v$ as in the definition of $\lam$-component i.e. $R_v$ is $\lam$-reached by $v$ and $\overline{R_v}$ is $(\lam-3)$-bypassed by $v$. By Observation~\ref{ObservationReachingOneMore} $R_v\cup\{v\}$ is $\lam$-reached by  $v$, and by the monotonicity of bypassing $\overline{R_v\cup \{v\}}$ is $(\lam-3)$-bypassed by $v$. This shows that without affecting anything we could have added the condition ``$v\in R_v$'' to the definition of $\lam$-component.

The next two lemmas show that reaching and bypassing are preserved by unions, as long as we weaken the parameter $\lam$ slightly.
\begin{lemma}[Reaching unions]\label{LemmaUnionReach}
For $m\leq \gamma_{\lam}^{-1}$, suppose that a vertex  $v$ $\lam$-reaches sets $R_1, \dots, R_m\subseteq V(D)$. Then  $v$ $(\lam+3)$-reaches  $\bigcup_{i=1}^m R_i$.
\end{lemma}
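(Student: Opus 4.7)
The plan is to verify the definition of $(\lam+3)$-reaching for $\bigcup_{i=1}^{m} R_i$ directly, by applying the $\lam$-reaching hypothesis to each $R_i$ individually and taking a union bound on the set of unreached vertices. The weakening from $\lam$ to $\lam+3$ only has to absorb this $m$-fold union, which is easy given the doubly-exponential gap between $\Delta_\lam$ and $\Delta_{\lam+3}$.

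Concretely, I would fix an arbitrary set $S$ of at most $k_{\lam+3}$ labels. Since the sequence $k_\bullet$ is decreasing, $k_{\lam+3}\leq k_\lam$, so $S$ is an admissible forbidden set for the hypothesis ``$v$ $\lam$-reaches $R_i$''. Applying this hypothesis for each $i=1,\dots,m$ yields, for every $i$, a set $E_i\subseteq R_i$ with $|E_i|\leq \Delta_\lam$ and, for every $x\in R_i\setminus E_i$, a switching path $P_{i,x}$ from $v$ to $x$ of length $\leq d_\lam$ that avoids $S$. Collecting one such path for each vertex in $\bigcup_i (R_i\setminus E_i)$ (picking arbitrarily if a vertex lies in several $R_i$) produces a family of switching paths from $v$ avoiding $S$ which reaches every vertex of $\bigcup_i R_i$ except possibly those in $\bigcup_i E_i$.

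It remains only to check the parameter inequalities, namely $d_\lam\leq d_{\lam+3}$ and $\sum_i |E_i|\leq m\Delta_\lam\leq \Delta_{\lam+3}$. The first is immediate from $d_\lam=\lam_{\MAX}\cdot 4^{\lam}$. For the second, using $m\leq \gamma_\lam^{-1}=\twr(\lam+2)$ and $\Delta_\lam=\twr(\lam)$, we get $m\Delta_\lam\leq \twr(\lam+2)\cdot\twr(\lam)\leq \lam_{\MAX}^{\twr(\lam+2)}=\twr(\lam+3)=\Delta_{\lam+3}$, which holds comfortably because $\lam_{\MAX}$ is large. I would wrap up by observing that this exhibits the required family of length $\leq d_{\lam+3}$ switching paths from $v$ avoiding $S$, missing at most $\Delta_{\lam+3}$ vertices of $\bigcup_i R_i$, so $v$ indeed $(\lam+3)$-reaches $\bigcup_{i=1}^m R_i$.

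There is no real obstacle here: the lemma is essentially a bookkeeping statement, and the constants $\Delta_\lam,\gamma_\lam$ were defined as a tower precisely so that a union of $\gamma_\lam^{-1}$ many exception sets of size $\Delta_\lam$ still fits inside the next exception budget $\Delta_{\lam+3}$. The only mild subtlety is remembering that the switching paths built for different $R_i$ all avoid the \emph{same} fixed set $S$, so they can be merged into a single family without any interaction; no concatenation (and thus no use of Lemma~\ref{LemmaConcatenatePaths}) is needed.
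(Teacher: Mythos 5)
Your proposal is correct and follows essentially the same route as the paper's proof: fix a forbidden set $S$ of size $\leq k_{\lam+3}\leq k_\lam$, apply the $\lam$-reaching hypothesis to each $R_i$ with this same $S$, and bound the total exception set by $m\Delta_\lam\leq \gamma_\lam^{-1}\Delta_\lam=\twr(\lam+2)\twr(\lam)\leq\twr(\lam+3)=\Delta_{\lam+3}$. Your explicit remark that the paths for the different $R_i$ avoid the same $S$ and so merge with no concatenation needed is a helpful clarification of what the paper leaves implicit.
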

\begin{proof}
Let $S$ be a set of $k_{\lam+3}$ labels. Since $k_{\lam+3}\leq k_\lam$ and $v$ $\lam$-reaches $R_i$, there are length $\leq d_\lam$ switching paths avoiding $S$ to all, except possibly $\Delta_\lam$, vertices $x\in R_i$ for each $i$. Therefore there are length $\leq d_\lam\leq d_{\lam+3}$ switching paths avoiding $S$ to all, except possibly $m\Delta_\lam\leq \gamma_\lam^{-1}\Delta_\lam= \twr(\lam+2)\twr(\lam)\leq \twr(\lam+3)=\Delta_{\lam+3}$ vertices in $\bigcup_{i=1}^{m} R_i$.
\end{proof}

A similar lemma holds for bypassing.
\begin{lemma}[Bypassing unions]\label{LemmaUnionBypass}
For $m\leq \gamma_{\lam-1}^{-1}$, suppose that a vertex $v$ $\lam$-bypasses sets $B_1, \dots, B_m\subseteq V(D)$. Then  $v$ $(\lam-1)$-bypasses  $\bigcup_{i=1}^m B_i$.
\end{lemma}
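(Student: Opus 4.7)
The plan is to proceed by contradiction, following essentially the same pattern as the proof of Lemma~\ref{LemmaUnionReach} but dualised. Suppose $v$ does \emph{not} $(\lam-1)$-bypass $\bigcup_{i=1}^m B_i$. Then by definition there exists some $R\subseteq \bigcup_{i=1}^m B_i$ with $|R|\geq \gamma_{\lam-1}n$ such that $v$ $(\lam-1)$-reaches $R$. The goal is to extract from $R$ a large subset lying entirely in one of the $B_i$ that is $\lam$-reached by $v$, contradicting the hypothesis that $v$ $\lam$-bypasses that $B_i$.

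To do this, I would partition $R$ into disjoint pieces $R_1,\dots, R_m$ with $R_i\subseteq B_i$ (assigning each element of $R$ to one $B_i$ containing it). Since $\sum_i |R_i| = |R| \geq \gamma_{\lam-1}n$ and $m\leq \gamma_{\lam-1}^{-1}$, by pigeonhole there is some index $j$ with
\[
|R_j|\;\geq\;\frac{\gamma_{\lam-1}n}{m}\;\geq\;\gamma_{\lam-1}^{2}\,n.
\]
By the monotonicity of reaching (Observation~\ref{ObservationHereditary}(i)) applied to $R_j\subseteq R$ and to $\lam\geq \lam-1$, the fact that $v$ $(\lam-1)$-reaches $R$ implies that $v$ $\lam$-reaches $R_j$.

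It remains to check that $|R_j|\geq \gamma_\lam n$, which will contradict the assumption that $v$ $\lam$-bypasses $B_j$. This reduces to verifying $\gamma_{\lam-1}^{2}\geq \gamma_{\lam}$, i.e.\ $\twr(\lam+2)\geq \twr(\lam+1)^{2}$. Unpacking the definition gives $\lam_{\MAX}^{\twr(\lam+1)}\geq \twr(\lam+1)^{2}$, which holds easily since $\lam_{\MAX}=4^{\epsilon^{-9}}$ and $\twr(\lam+1)\geq \lam_{\MAX}$ for $\lam\geq 0$.

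I do not expect any serious obstacle here; the lemma is the natural bypassing analogue of Lemma~\ref{LemmaUnionReach} and the only mildly subtle step is the constant check, which is handled by the very fast growth of $\twr$. Everything else is a direct application of the two monotonicity statements in Observation~\ref{ObservationHereditary} together with pigeonhole.
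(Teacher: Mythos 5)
Your proposal is correct and takes essentially the same route as the paper: both arguments boil down to observing that the largest piece $R\cap B_j$ (the paper takes the max; you partition and apply pigeonhole) has size at least $|R|/m$, use monotonicity of reaching to conclude $v$ $\lam$-reaches it, and contradict $\lam$-bypassing of $B_j$ via the same constant inequality $\gamma_{\lam-1}^2\geq\gamma_\lam$ (equivalently $\twr(\lam+2)\geq\twr(\lam+1)^2$). The paper phrases it contrapositively rather than by contradiction, but the content is identical.
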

\begin{proof}
Suppose that $v$ $(\lam-1)$-reaches a set $R\subseteq \bigcup_{i=1}^m B_i$. Without loss of generality we can suppose that $B_1, \dots, B_m$ are ordered so that $|R\cap B_1|\geq |R\cap B_j|$ for $j>1$. Since $R=\bigcup_{i=1}^m R\cap B_i$ we have $|R\cap B_1|\geq |R|/m$. From the monotonicity of reaching, $v$ $\lam$-reaches $R\cap B_1$. Since $v$ $\lam$-bypasses $B_1$, this implies $|R\cap B_1|<\gamma_{\lam}|D|$. This gives $|R|\leq m |R\cap B_1|\leq m\gamma_{\lam}|D|\leq \gamma_{\lam-1}^{-1}\gamma_{\lam}|D|=\twr(\lam+1)\twr(\lam+2)^{-1}|D|\leq \twr(\lam+1)^{-1}|D|=\gamma_{\lam-1} |D|.$ Since  $R$ was arbitrary, we have proved that $v$ $(\lam-1)$-bypasses  $\bigcup_{i=1}^m B_i$.
\end{proof}

Recall that ``two vertices $u$ and $v$ being connected by a path'' is a transitive relation on vertices in an graph. This transitivity is used to show that connected components in a graph are equivalence classes.
The following lemma shows that ``reaching'' also has a kind of transitive property. The lemma plays a similar role in showing that $\lam$-components exist.
\begin{lemma}[Transitivity of reaching]\label{Transitivity}
For a labelled digraph $D$ and  $1\leq \lam\leq \lam_\MAX$, suppose that we have a vertex $v\in V(D)$, and $R$ such that $v$ $\lam$-reaches $R$.
Suppose that we have distinct vertices  $x_0, \dots, x_{\Delta_\lam}\in R$ and a set $W$ such that  $x_i$ $\lam$-reaches $W$ for each $i$.
Then $v$ $(\lam+1)$-reaches $W$.
\end{lemma}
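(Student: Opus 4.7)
The plan is to concatenate a path from $v$ to one of the $x_i$ with a path from that $x_i$ to a vertex of $W$, using Lemma~\ref{LemmaConcatenatePaths}. Fix a set $S$ of at most $k_{\lam+1}$ labels; our job is to find length $\leq d_{\lam+1}$ switching paths avoiding $S$ from $v$ to all but at most $\Delta_{\lam+1}$ vertices of $W$. Since $v$ $\lam$-reaches $R$ and $k_{\lam+1}\leq k_\lam$, there are length $\leq d_\lam$ switching paths from $v$ avoiding $S$ to all but at most $\Delta_\lam$ vertices of $R$. Because $x_0,\dots,x_{\Delta_\lam}$ are $\Delta_\lam+1$ distinct vertices of $R$, at least one of them, call it $x$, is reached by such a path $P$.

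Now I would set $S' = S\cup \underline{P}$. Since $|\underline{P}|\leq 3d_\lam$, we have $|S'|\leq k_{\lam+1}+3d_\lam$, and one checks from the explicit formulas that $k_\lam-k_{\lam+1}=\lam_\MAX\cdot 4^{\lam_\MAX\lam}(4^{\lam_\MAX}-1)$ dwarfs $3d_\lam=3\lam_\MAX\cdot 4^\lam$ for $\lam\leq \lam_\MAX$, so $|S'|\leq k_\lam$. Applying the hypothesis that $x$ $\lam$-reaches $W$ with forbidden set $S'$, we get length $\leq d_\lam$ switching paths $Q_w$ from $x$ to $w$ avoiding $S'$, for all $w\in W$ outside some exceptional set of size at most $\Delta_\lam$.

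For each such $w$, I would form $P+Q_w$. Because $Q_w$ avoids $S'\supseteq \underline{P}$, Lemma~\ref{LemmaConcatenatePaths} guarantees that $P+Q_w$ is a switching path; its length is $\leq 2d_\lam\leq 4d_\lam = d_{\lam+1}$. To see that $P+Q_w$ avoids $S$, note $S\cap \underline{P}\subseteq\{v\}$ (since $P$ avoids $S$) and $S\cap \underline{Q_w}\subseteq\{x\}$ (since $Q_w$ avoids $S'\supseteq S$ and starts at $x$); but $x\in \underline{P}$ and $S\cap \underline{P}\subseteq\{v\}$ force $x\notin S$ unless $x=v$, in which case $P$ is the trivial path and $P+Q_w=Q_w$ starts at $v=x$. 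Either way $S\cap \underline{P+Q_w}\subseteq\{v\}$, i.e.\ $P+Q_w$ avoids $S$ as a path starting at $v$.

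This produces length $\leq d_{\lam+1}$ switching paths from $v$ avoiding $S$ to all but at most $\Delta_\lam\leq \Delta_{\lam+1}$ vertices of $W$, establishing that $v$ $(\lam+1)$-reaches $W$. The only genuine content is the concatenation step and the choice of $S'$; the main thing to be careful about is the boundary case $x=v$ in the ``avoids'' bookkeeping, and the constant inequality $3d_\lam\leq k_\lam-k_{\lam+1}$, both of which are routine given the explicit formulas for $d_\lam$ and $k_\lam$.
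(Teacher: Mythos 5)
Your proof is correct and follows essentially the same route as the paper: pick the forbidden set $S$, use the pigeonhole on $x_0,\dots,x_{\Delta_\lam}$ to find a path $P$ from $v$ to some $x_i$ avoiding $S$, enlarge the forbidden set to $S' = S\cup\underline{P}$ (checking $|S'|\le k_\lam$), find the paths $Q_w$ from $x_i$, and concatenate via Lemma~\ref{LemmaConcatenatePaths}. The only difference is that you spell out the ``avoids'' bookkeeping for $P+Q_w$ (including the degenerate case $x=v$) and the arithmetic $3d_\lam\le k_\lam - k_{\lam+1}$, both of which the paper leaves implicit; these are correct and the approach is the same.
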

\begin{proof}
Let $S$ be a set of $k_{\lam+1}$ labels. Since $v$ $\lam$-reaches $R$ and $k_{\lam+1}\leq k_\lam$, there is some $i\in 0, \dots, {\Delta_\lam}$ such that there is a  length $\leq d_{\lam}$ switching path $P$ from $v$ to $x_i$ avoiding $S$. Since $x_i$ $\lam$-reaches $W$ and $|S|+|\underline P|\leq k_{\lam+1}+3d_\lam \leq k_\lam$, there is a  length $\leq d_\lam$ switching path $P_w$ avoiding $S$ and   $\underline P$ from $x_i$ to all, except $\Delta_\lam$ vertices of $w\in W$. 

Using Lemma~\ref{LemmaConcatenatePaths}, the paths $P+P_w$ are length $\leq 2d_\lam\leq d_{\lam+1}$ switching paths avoiding $S$ to all except at most $\Delta_\lam\leq\Delta_{\lam+1}$ vertices  $w\in W$. This proves the lemma.
\end{proof}

A consequence of Observation~\ref{ObservationReachBypassExistence} (iii) is that components cannot be much smaller than the neighborhoods of vertices they contain.
\begin{lemma}[Components are larger than neighbourhoods]\label{ComponentLarge} 
Let $D$ be a out-properly labelled, directed graph on $n$ vertices with $X_0$ the set of non-vertex labels in $D$, and $6\leq \lam\leq \lam_\MAX$.
For any $\lam$-component $C$ and  $v\in C$ we have $|C|\geq |N^+_{X_0}(v)|-\delta n-\gamma_{\lam-3}n$.
\end{lemma}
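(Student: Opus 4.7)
The plan is to unpack the definition of $\lam$-component at $v$, obtain the set $R_v$, and then split $N^+_{X_0}(v)$ into the part that lies in $R_v$ and the part that lies in $\overline{R_v}$. The first part is controlled by $|R_v|$, which differs from $|C|$ by at most $\delta n$; the second part is controlled via bypassing.

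First I would apply the definition of $\lam$-component to the vertex $v \in C$ to obtain a set $R_v$ satisfying $|R_v \symdiff C| \leq \delta n$, such that $v$ $\lam$-reaches $R_v$ and $v$ $(\lam-3)$-bypasses $\overline{R_v}$. Next I would invoke Observation~\ref{ObservationReachBypassExistence}(iii), which says that $v$ $3$-reaches $N^+_{X_0}(v)$. Since $\lam \geq 6$, we have $\lam - 3 \geq 3$, so by monotonicity of reaching (Observation~\ref{ObservationHereditary}(i)) the vertex $v$ also $(\lam-3)$-reaches the subset $N^+_{X_0}(v) \cap \overline{R_v}$.

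Now this last set lies inside $\overline{R_v}$, which $v$ $(\lam-3)$-bypasses. By definition of bypassing, every $(\lam-3)$-reached subset of $\overline{R_v}$ has size strictly less than $\gamma_{\lam-3} n$, so
\[
|N^+_{X_0}(v) \cap \overline{R_v}| < \gamma_{\lam-3} n.
\]
On the other hand, $|N^+_{X_0}(v) \cap R_v| \leq |R_v| \leq |C| + |R_v \setminus C| \leq |C| + \delta n$, using $|R_v \symdiff C| \leq \delta n$. Adding these two estimates,
\[
|N^+_{X_0}(v)| = |N^+_{X_0}(v) \cap R_v| + |N^+_{X_0}(v) \cap \overline{R_v}| \leq |C| + \delta n + \gamma_{\lam-3} n,
\]
which rearranges to the desired bound $|C| \geq |N^+_{X_0}(v)| - \delta n - \gamma_{\lam-3} n$.

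There is no real obstacle here; the argument is a short combination of three previously established facts (the definition of $\lam$-component, Observation~\ref{ObservationReachBypassExistence}(iii), and monotonicity of reaching). The only subtle point is the arithmetic condition $\lam \geq 6$, which is precisely why the hypothesis $6 \leq \lam \leq \lam_\MAX$ is included in the statement: it ensures $\lam - 3 \geq 3$ so that the $3$-reaching given by the neighborhood observation can be compared against the $(\lam-3)$-bypassing from the component definition.
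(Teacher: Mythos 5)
Your proof is correct and uses the same three ingredients as the paper (the definition of $\lam$-component at $v$, Observation~\ref{ObservationReachBypassExistence}(iii) combined with monotonicity of reaching to get $|N^+_{X_0}(v)\cap\overline{R_v}|\leq\gamma_{\lam-3}n$, and $|R_v\setminus C|\leq\delta n$). The only cosmetic difference is that you bound $|N^+_{X_0}(v)|$ from above by splitting it along $R_v$ and $\overline{R_v}$, while the paper bounds $|C|$ from below starting from $|C|\geq|N^+_{X_0}(v)\cap C|$; the arithmetic is equivalent.
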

\begin{proof}
By the definition of $C$ being a $\lam$-component, there is a set $R_v$ with $|R_v\setminus C|\leq \delta n$ such that $v$ $(\lam-3)$-bypasses $\overline {R_v}$. By Observation~\ref{ObservationReachBypassExistence} (iii) and the monotonicity of reaching, $v$ $(\lam-3)$-reaches $N^+_{X_0}(v)$. This gives $|N^+_{X_0}(v)\cap \overline {R_v}|\leq\gamma_{\lam-3}n$ which implies the lemma: 
$$|C|\geq |N^+_{X_0}(v)|-|N^+_{X_0}(v)\cap \overline C|\geq  |N^+_{X_0}(v)|-|N^+_{X_0}(v)\cap \overline C\cap \overline{R_v}|-|\overline C\setminus \overline{R_v}|\geq |N^+_{X_0}(v)|-\delta n-\gamma_{\lam-3}n.$$
\end{proof}

\subsection{Constructing $\lam$-components}\label{SectionComponentExistence}
The goal of this section is to show that $\lam$-components exist for suitable $\lam$. The first step towards this is to show that for any vertex $v$ and number $\lam$, there is a set $R_v\subseteq V(D)$ possessing the two properties $R_v$ has in the definition of $\lam$-component.
\begin{lemma}\label{SetReachedBypassed1}
For all vertices $v$ in a labelled digraph $D$ and $1\leq \lam\leq \lam_\MAX$, there is a set $R\subseteq V(D)$ such that $v$ $(\lam+3)$-reaches $R$ and $v$ $\lam$-bypasses $\overline R$.
\end{lemma}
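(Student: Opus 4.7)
The plan is to construct $R$ greedily. Initialize $R = \emptyset$, and iteratively extend it as follows: at each step, if there exists a set $R' \subseteq \overline{R}$ with $|R'| \geq \gamma_\lam n$ that is $\lam$-reached by $v$, then replace $R$ by $R \cup R'$; otherwise halt. Call the sets added during this process $R'_1, \ldots, R'_m$, so that the final $R$ equals $\bigcup_{i=1}^m R'_i$.

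At step $i$ the newly added set $R'_i$ is contained in the complement of the current $R$, so the sets $R'_1, \ldots, R'_m$ are pairwise disjoint. Since each has size at least $\gamma_\lam n$, we get $m\gamma_\lam n \leq n$, i.e. $m \leq \gamma_\lam^{-1}$; in particular the procedure terminates. When it terminates, by construction there is no $R' \subseteq \overline{R}$ with $|R'| \geq \gamma_\lam n$ that is $\lam$-reached by $v$, which is exactly the statement that $v$ $\lam$-bypasses $\overline{R}$.

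For the reaching condition, note that $v$ $\lam$-reaches each of the $R'_i$ by the selection rule, and there are $m \leq \gamma_\lam^{-1}$ of them. Applying Lemma~\ref{LemmaUnionReach} (reaching unions) yields that $v$ $(\lam+3)$-reaches $R = \bigcup_{i=1}^m R'_i$. The degenerate case $m = 0$, in which $R = \emptyset$, is handled directly by Observation~\ref{ObservationReachBypassExistence}(i), since $|R| = 0 \leq \Delta_{\lam+3}$.

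I do not expect any real obstacle here: the argument is essentially a one-step greedy maximisation, and its two outputs match the two halves of the conclusion by design. The only point requiring care is to make the count $m \leq \gamma_\lam^{-1}$ line up with the hypothesis of Lemma~\ref{LemmaUnionReach}, and this is forced automatically by imposing the threshold $|R'_i| \geq \gamma_\lam n$ at each greedy step.
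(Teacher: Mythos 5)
Your proposal is correct and is essentially identical to the paper's own proof: the same greedy construction of disjoint $\gamma_\lam n$-sized reached sets, the same bound $m \le \gamma_\lam^{-1}$, the same conclusion of bypassing from termination and of $(\lam+3)$-reaching from Lemma~\ref{LemmaUnionReach}. The only (harmless) difference is that you explicitly address the $m=0$ degenerate case, which the paper leaves implicit.
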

\begin{proof}
We define sets of vertices $R_0, R_1, R_2, \dots, R_m$  recursively as follows.
\begin{itemize}
\item Let $R_0=\emptyset.$
\item For each $i\geq 1$, if possible, choose $R_{i}$ to be any set disjoint from $R_0\cup\dots\cup R_{i-1}$ which is $\lam$-reached by $v$, and also $|R_{i}|\geq \gamma_{\lam}|D|$.
\item Otherwise, if no such $R_{i}$ exists, we stop with $m=i-1$.
\end{itemize}
Notice that the sets $R_1, \dots, R_m$ are all disjoint and satisfy $|R_{i}|\geq \gamma_{\lam}|D|$ which implies that $m\leq \gamma_\lam^{-1}$.
Set $R=R_1\cup\dots\cup R_m$.

By definition of $m$, $v$ $\lam$-bypasses $\overline R$---indeed otherwise we could choose a set $R_{m+1}$ of size $\gamma_{\lam}|D|$ disjoint from $R$ which is $\lam$-reached by $v$, contradicting the fact that we stopped at $m$.  
By Lemma~\ref{LemmaUnionReach}   $v$  $(\lam+3)$-reaches $R$.
This completes the proof.
\end{proof}

Notice that in the above lemma would be stronger if it produced a set $R$ with $R$ $\lam$-reached by $v$ and $\overline R$ $\lam'$-bypassed by $v$ for $\lam'>\lam$ (rather than $\lam'<\lam$ as Lemma~\ref{SetReachedBypassed1} gives us.) The next lemma tries to prove something like this---it produces two sets $R$ and $B$ which are ``nearly complementary'' such that $v$ $\lam$-reaches $R$ and $\lam'$-bypasses $B$ for $\lam'>\lam$.
\begin{lemma}\label{SetReachedBypassed2}
Let $D$ be a labelled digraph on $n$ vertices, $\lam_0\in \mathbb{N}$ with  $43\delta^{-1} \leq \lam_0\leq \lam_\MAX$, and $v\in V(D)$.  There are two sets of vertices $R$ and $B$ satisfying the following.
\begin{enumerate}[(i)]
\item $|V(D)\setminus (R\cup B)|\leq \delta n/3$.
\item There is a $\lam$ with $\lam_0-42\delta^{-1}\leq \lam\leq \lam_0$ such that
\begin{itemize}
\item $v$ $(\lam-4)$-reaches $R$.
\item $v$ $\lam$-bypasses $B$.
\end{itemize}
\end{enumerate}
\end{lemma}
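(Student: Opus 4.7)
The plan is to build a nested chain $R_0\subseteq R_1\subseteq\cdots\subseteq R_N$ indexed by increasing bypass parameters $\mu_i$, and then apply a pigeonhole on the sizes $|R_i|$ to find two consecutive sets that differ by only $\leq\delta n/3$ vertices. Sandwiching the final $R$ and $B$ between these two sets will deliver the statement.

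Choose $\mu_i=\lam_0-42\delta^{-1}+8i$ for $i=0,\ldots,N$ with $N=\lfloor 42\delta^{-1}/8\rfloor$, so $\mu_0\geq\delta^{-1}\geq 1$ and $\mu_N\leq\lam_0$. Obtain $R_0$ by applying Lemma~\ref{SetReachedBypassed1} at parameter $\mu_0$; this expresses $R_0$ as a union of at most $\gamma_{\mu_0}^{-1}$ subsets, each $\mu_0$-reached by $v$. Inductively, form $R_i$ from $R_{i-1}$ by iteratively adding any set $T\subseteq\overline{R_{i-1}}$ with $|T|\geq\gamma_{\mu_i}n$ that is $\mu_i$-reached by $v$, until no such $T$ remains. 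By construction, $\overline{R_i}$ is $\mu_i$-bypassed (any larger $\mu_i$-reached subset would have been eligible to be added).

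To bound the reach level of $R_i$, note that $R_i$ decomposes as a union of at most $(i+1)\gamma_{\mu_i}^{-1}$ sets, each $\mu_k$-reached for some $k\leq i$ and hence $(\mu_i+1)$-reached by the monotonicity of reaching. Since $\gamma_{\mu_i+1}^{-1}=\twr(\mu_i+3)=\lam_\MAX^{\twr(\mu_i+2)}$ vastly exceeds $(i+1)\twr(\mu_i+2)=(i+1)\gamma_{\mu_i}^{-1}$, Lemma~\ref{LemmaUnionReach} applies at parameter $\mu_i+1$ and shows that $v$ $(\mu_i+4)$-reaches $R_i$.

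Because $0\leq|R_0|\leq\cdots\leq|R_N|\leq n$ and $N\geq 3\delta^{-1}$, pigeonhole provides an index $i$ with $|R_{i+1}\setminus R_i|\leq n/N\leq\delta n/3$. Set $R=R_i$, $B=\overline{R_{i+1}}$, and $\lam=\mu_{i+1}\in[\lam_0-42\delta^{-1},\lam_0]$. Then $|V(D)\setminus(R\cup B)|=|R_{i+1}\setminus R_i|\leq\delta n/3$; moreover $\lam-4=\mu_{i+1}-4=\mu_i+4$, so $v$ $(\lam-4)$-reaches $R$ by the previous paragraph; and $v$ $\lam$-bypasses $B$ by construction. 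The main obstacle is controlling the reach level of $R_i$ as a union of many sets built at different parameters: one must invoke the union lemma one level above $\mu_i$ (losing one level of reach), which forces the spacing $\mu_{i+1}-\mu_i=8$ rather than the superficially sufficient $7$, and this is precisely what accounts for the factor $42\delta^{-1}$ in the allowed range for $\lam$.
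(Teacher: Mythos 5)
Your argument is correct, but it takes a genuinely different route from the paper's proof. The paper constructs sets $R_1,\dots,R_{6\delta^{-1}}$ by applying Lemma~\ref{SetReachedBypassed1} independently at \emph{decreasing} parameters $\lam_i=\lam_{i-1}-7$; these sets are not nested, and the key step is the estimate $|R_i\cap\overline{R_{i-1}}|\leq\gamma_{\lam_{i-1}}n$ (a bypass/monotonicity argument), which shows that if every consecutive gap $|R_{i-1}\setminus R_i|$ exceeded $\delta n/3$ then $|R_i|$ would shrink by roughly $\delta n/4$ per step and become negative --- a ``forced termination'' argument. You instead build a \emph{nested, increasing} chain $R_0\subseteq R_1\subseteq\cdots\subseteq R_N$ at \emph{increasing} parameters $\mu_i$, and replace the forced-termination argument with a clean pigeonhole on $|R_0|\leq\cdots\leq|R_N|\leq n$. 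What your version buys is the structural simplicity of a nested chain and an elementary pigeonhole; what it costs is having to track a decomposition of each $R_i$ into at most $(i+1)\gamma_{\mu_i}^{-1}$ pieces and re-invoke Lemma~\ref{LemmaUnionReach} at each stage to re-establish the reach level of the accumulated union, losing one extra level of $\lam$ and forcing the spacing $8$ instead of the paper's $7$ --- a trade you correctly identify. One small presentational point: you invoke the fact that Lemma~\ref{SetReachedBypassed1} delivers $R_0$ as a union of at most $\gamma_{\mu_0}^{-1}$ pieces each $\mu_0$-reached, but that information lives in the lemma's \emph{proof}, not its statement. This is easily avoided: treat $R_0$ as a single $(\mu_0+3)$-reached set; by monotonicity it is $(\mu_i+1)$-reached for every $i\geq 1$ since $\mu_i+1\geq\mu_0+3$, and for $i=0$ the conclusion ``$v$ $(\mu_0+4)$-reaches $R_0$'' follows directly from monotonicity. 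Also, your phrase ``adding any set $T\subseteq\overline{R_{i-1}}$ \dots until no such $T$ remains'' should read ``disjoint from the set accumulated so far'' rather than from the fixed $R_{i-1}$; this is clearly the intent, since otherwise the process neither terminates nor yields the bypass property for $\overline{R_i}$.
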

\begin{proof}
Define $\lam_1, \dots, \lam_{6\delta^{-1}}$, $R_1, \dots, R_{6\delta^{-1}}$ as follows.
\begin{itemize}
\item For each $i$, set $\lam_i=\lam_{i-1}-7$.
\item Let $R_i$ be a set which is $(\lam_i+3)$-reached by $v$ and with $\overline R_i$  $\lam_i$-bypassed by $v$. Such a set exists by Lemma~\ref{SetReachedBypassed1}.
\end{itemize}
We show that there is some index $m$ satisfying a property like part (i) of the lemma.
\begin{claim}
There is some $m\in \{1, \dots, 6\delta^{-1}\}$ for which $|V(D)\setminus (R_m\cup \overline{R_{m-1}})|\leq \delta n/3$.
\end{claim}
\begin{proof}
Suppose for the sake of contradiction that $|V(D)\setminus (R_i\cup \overline{R_{i-1}})|> \delta n/3$ for all $i=1, \dots, 6\delta^{-1}$.
Notice that we have  $|R_i\cap \overline{R_{i-1}}|\leq \gamma_{\lam_{i-1}}n$  (since $v$ $\lam_{i-1}$-bypasses $\overline{R_{i-1}}$, $v$ $(\lam_i+3)$-reaches $R_i\cap \overline{R_{i-1}}$, and $\lam_i+3\leq \lam_{i-1}$.) We also have $|R_{i-1}\setminus R_i|=|V(D)\setminus (R_i\cup \overline{R_{i-1}})|> \delta n/3$ for $i<6\delta^{-1}$. Combining these, we get the following
$$|R_{i}|=|R_{i}\cap R_{i-1}|+|R_i\cap \overline{R_{i-1}}|
=|R_{i-1}|-|R_{i-1}\setminus R_i| + |R_i\cap \overline{R_{i-1}}|
< |R_{i-1}|-(\delta/3-\gamma_{\lam_{i-1}})n.$$
Notice that for all $i\leq 6\delta^{-1}$ we have $\lam_i\geq 1$, and so $\gamma_{\lam_{i}}\leq \lam_{\MAX}^{-1}\leq \delta/12$ which implies $|R_{i}|< |R_{i-1}|-\delta n/4$.
Iterating this gives $0\leq |R_i|< |R_1|-(i-1)\delta n/4\leq n-(i-1)\delta n/4$. 
This is a contradiction for $i= 6\delta^{-1}$.
\end{proof}

Set $R=R_m$, $B=\overline{R_{m-1}}$ and $\lam=\lam_{m-1}$. Then $v$ $(\lam-4)$-reaches $R_i$ and $\lam$-bypasses $B$ by the constructions of $R_{i}$ and $R_{i-1}$. We have $|V(D)\setminus (R\cup B)|\leq \delta n/3$ by choice of $m$. Finally we have $\lam_0\geq \lam \geq \lam_0-7m\geq \lam_0-42\delta^{-1}$.
\end{proof}

As a prelude to constructing components we give a condition under which a singleton $\{v\}$ is a $\lam$-component.
\begin{lemma}\label{SingleVertexComponent}
Let $D$ be a labelled digraph on $n\geq N_0$ vertices and $4\leq \lam\leq \lam_\MAX$.
Suppose that $v$ $\lam$-bypasses $B$ with $|\overline B|\leq \delta n/2$. Then  $\{v\}$ is a $\lam$-component.
\end{lemma}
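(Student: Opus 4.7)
The plan is to produce the witnessing set $R_v$ required in the definition of $\lam$-component for the singleton $C=\{v\}$ by invoking Lemma~\ref{SetReachedBypassed1}, and then to use the bypass hypothesis on $B$ together with the smallness of $\overline B$ to show that the set produced is in fact tiny.

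Concretely, I would first apply Lemma~\ref{SetReachedBypassed1} at parameter $\lam-3$ (which lies in $[1,\lam_\MAX]$ since $\lam\geq 4$) to obtain a set $R\subseteq V(D)$ such that $v$ $\lam$-reaches $R$ and $v$ $(\lam-3)$-bypasses $\overline R$. I would then set $R_v=R\cup\{v\}$ and verify the three conditions in the definition of $\lam$-component for this choice.

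Two of the three verifications will be immediate from the basic properties in Section~\ref{SectionBasicProperties}. That $v$ $\lam$-reaches $R_v$ follows from Observation~\ref{ObservationReachingOneMore} applied to the set $R$. That $v$ $(\lam-3)$-bypasses $\overline{R_v}$ follows from the monotonicity of bypassing in Observation~\ref{ObservationHereditary}(ii), because $\overline{R_v}\subseteq \overline R$. The only non-trivial point will be the size bound $|R_v\symdiff\{v\}|\leq\delta n$; since $v\in R_v$, this is equivalent to $|R|\leq\delta n+1$.

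To control $|R|$, I will split it as $R=(R\cap B)\sqcup(R\cap\overline B)$. For the first piece, $R\cap B\subseteq B$, and monotonicity of reaching (Observation~\ref{ObservationHereditary}(i)) gives that $v$ $\lam$-reaches $R\cap B$; the hypothesis that $v$ $\lam$-bypasses $B$ then forces $|R\cap B|<\gamma_\lam n$. For the second piece, trivially $|R\cap\overline B|\leq|\overline B|\leq\delta n/2$. Since the chosen constants satisfy $\gamma_\lam\leq\gamma_1=\twr(3)^{-1}\ll\delta$ for every $\lam\geq 1$ in the allowed range, these two estimates combine to give $|R|\leq\gamma_\lam n+\delta n/2\leq\delta n$, as required. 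The main (only) conceptual obstacle is spotting that the output of Lemma~\ref{SetReachedBypassed1} will be small when paired with a bypassed set whose complement is small: on $B$ the reached portion is negligible by definition of bypassing, and off $B$ everything is automatically tiny by the hypothesis on $|\overline B|$.
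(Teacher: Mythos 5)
Your proposal is correct and takes essentially the same route as the paper: apply Lemma~\ref{SetReachedBypassed1} at parameter $\lam-3$ to get a set $R$ that $v$ $\lam$-reaches and whose complement $v$ $(\lam-3)$-bypasses, then split $R$ as $(R\cap B)\cup(R\cap\overline B)$ and use the bypass hypothesis on the first piece and $|\overline B|\leq\delta n/2$ on the second. The only cosmetic difference is that you replace $R$ by $R\cup\{v\}$ (via Observation~\ref{ObservationReachingOneMore} and monotonicity of bypassing) to avoid an additive $+1$ in the symmetric-difference estimate, whereas the paper keeps $R$ as is and just absorbs the $+1$ using $n\geq N_0$; both are fine.
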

\begin{proof}
To prove the lemma we need to choose a set $R_v$ and show that it satisfies all the properties of the set $R_v$ in the definition of $\lam$-component.
Apply Lemma~\ref{SetReachedBypassed1} to get a set $R_v$ which is $\lam$-reached by $v$ and with $\overline{R_v}$ $(\lam-3)$-bypassed by $v$. 
Notice that since $v$ $\lam$-bypasses $B$, we must have $|R_{v}\cap B|\leq \gamma_{\lam} n$. This gives $|R_{v}\symdiff \{v\}|\leq |R_{v}|+1\leq |R_{v}\cap B|+|\overline B|+1\leq \gamma_{\lam}n+\delta n/2+1\leq \delta n$. 
\end{proof}

The following lemma is a purely technical tool which we will need. A $r$-uniform multihypergraph $\mathcal H$ with $n$ vertices and $m$ edges is a family of $m$ size $r$ subsets of $[n]$ with the possibility of $\mathcal H$ containing several copies of the same subset.
\begin{lemma}\label{LargeIntersectionInHypergraph}
Let $\mathcal H$ be a $\gamma n$-uniform multihypergraph with $n$ vertices and $m$ edges. Then, for any $t$ with $\gamma/2\geq 2t/m$, there are $t$ edges $T_1, \dots, T_t\in \mathcal H$ with 
$|T_1\cap \dots \cap T_t|\geq {\left(\frac{\gamma}{2}\right)}^{t}n.$
\end{lemma}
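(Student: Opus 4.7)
The plan is to prove the lemma by a first-moment argument: I will pick a uniformly random $t$-subset $\{T_1,\dots,T_t\}$ of the $m$ edges of $\mathcal H$ and show that the expected intersection size is already at least $(\gamma/2)^t n$, so some particular $t$-subset realizes this bound. Writing $d(v)$ for the number of edges of $\mathcal H$ containing $v$, double-counting incidences gives $\sum_v d(v) = \gamma n m$, and by linearity of expectation,
\[
\mathbb{E}\bigl[|T_1\cap\dots\cap T_t|\bigr] \;=\; \sum_{v} \Pr[v\in T_1\cap\dots\cap T_t] \;=\; \frac{1}{\binom{m}{t}}\sum_v \binom{d(v)}{t}.
\]

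The only real work is to bound $\sum_v\binom{d(v)}{t}$ from below. I cannot apply Jensen's inequality to $\binom{\cdot}{t}$ directly, since as a polynomial it has $t$ real roots and is not convex on $[0,\infty)$. Instead, I will use the elementary substitute $\binom{d(v)}{t}\geq \frac{1}{t!}\bigl((d(v)-t+1)_+\bigr)^{t}$, valid for every $v$: each of the $t$ factors of the falling factorial is at least $d(v)-t+1$ when $d(v)\geq t-1$, and both sides vanish for smaller $d(v)$. The function $\phi(x)=((x-t+1)_+)^{t}$ is honestly convex on all of $[0,\infty)$ (it is the composition of the convex increasing $y\mapsto y^t$ on $[0,\infty)$ with the convex $x\mapsto (x-t+1)_+$), so Jensen applied with average degree $\bar d=\gamma m$ yields
\[
\sum_v \phi(d(v))\;\geq\; n\,\phi(\gamma m)\;=\; n\,(\gamma m-t+1)^{t},
\]
where $\gamma m\geq t-1$ follows from the hypothesis $\gamma/2\geq 2t/m$, equivalently $\gamma m\geq 4t$.

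That same hypothesis gives $\gamma m-t+1\geq \gamma m/2$, so $\sum_v\binom{d(v)}{t}\geq n(\gamma m/2)^{t}/t!$, and combining with the trivial bound $\binom{m}{t}\leq m^{t}/t!$ produces
\[
\mathbb{E}\bigl[|T_1\cap\dots\cap T_t|\bigr]\;\geq\; \frac{n(\gamma m/2)^{t}/t!}{m^{t}/t!}\;=\; n(\gamma/2)^{t},
\]
so some actual $t$-subset attains at least this expectation. The main thing to get right is the convex surrogate $\phi$: it must agree with $\binom{\cdot}{t}$ closely near the average degree (so that Jensen does not lose too much, and the $\gamma^{t}$ there survives as $(\gamma/2)^{t}$ in the target), while being globally convex and a valid lower bound for every possible value of $d(v)$, including the low-degree vertices which would otherwise obstruct a naive Jensen application.
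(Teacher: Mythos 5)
Your proof is correct and takes essentially the same route as the paper: a uniformly random $t$-subset, linearity of expectation, and a convexity argument to lower-bound $\sum_v \binom{d(v)}{t}$. The only variation is cosmetic: the paper restricts the sum to $V_{\geq t}=\{v:d(v)\geq t\}$ and bounds $\binom{d}{t}/\binom{m}{t}\geq ((d-t)/m)^{t}$ before applying the power-mean inequality, whereas you avoid that split by substituting the globally convex surrogate $\phi(x)=((x-t+1)_{+})^{t}$ and pairing it with $\binom{m}{t}\leq m^{t}/t!$ — a slightly cleaner way to handle the low-degree vertices, but the same idea.
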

\begin{proof}
 Let $\mathcal T$ be a set of $t$ distinct edges  of $\mathcal H$ chosen uniformly at random from all such sets. To prove the lemma it is sufficient to show that the expected size of the intersection of the edges in $\mathcal T$ is at least ${\left(\frac{\gamma}{2}\right)}^{t}n.$
For any vertex $v\in V(\mathcal{H})$, let $d(v)$ be the number of edges of $\mathcal H$ containing $v$. 
Let $V_{\geq t}=\{v\in V(\mathcal H): d(v)\geq t\}$.
By linearity of expectation we have the following.
\begin{align*}
\mathbb{E}\left(\left|\bigcap_{E\in \mathcal T} E\right|\right)
&= \sum_{v\in V(\mathcal{H})}\mathbb{P}\left(v\in \bigcap_{E\in \mathcal T} E \right)
= \sum_{v\in V_{\geq t}}\mathbb{P}\left(v\in \bigcap_{E\in \mathcal T} E \right)\\
&=\sum_{v\in  V_{\geq t}}\frac{\binom{d(v)}{t}}{\binom{m}{t}}
\geq \sum_{v\in V_{\geq t}} \left(\frac{(d(v)-t)}{m}\right)^t.
\end{align*}
The inequality comes from ``${\binom{d}{t}}\big/{\binom{m}{t}}\geq \left({(d-t)}/{m}\right)^t$ for $d\geq t$.'' 
Using convexity of $f(x)=x^t$ we can prove the lemma.
$$\mathbb{E}\left(\left|\bigcap_{E\in \mathcal T} E\right|\right)
\geq \sum_{v\in V_{\geq t}} \left(\frac{(d(v)-t)}{m}\right)^t
\geq \left(\sum_{v\in V_{\geq t}} \frac{(d(v)-t)}{nm}\right)^t n
\geq\left(\frac{\gamma m -2t}{m}\right)^t n
\geq \left(\frac{\gamma}{2}\right)^t n.$$
The third inequality comes from $\sum_{v\in V(G)} d(v)=\gamma n m$ and $\sum_{v\in V(G)\setminus V_{\geq t}} d(v)\leq tn$.
The last inequality comes from  $\gamma/2\geq 2t/m$.
\end{proof}

The following lemma is the main result of this section. It implies that for a given $\lam_0$, there is a $\lam$-component $C$ for some $\lam$ which is close to $\lam_0$. In addition the lemma gives some control over where the component $C$ is located---given any set $B_0$ which is $\lam_0$-bypassed, we can choose $C$ to be outside $B_0$.
\begin{lemma}\label{ComponentExistence}
Let $D$ be a labelled digraph on $n\geq N_0$ vertices and $87\delta^{-2} \leq \lam_0\leq \lam_\MAX$.
Suppose we have $v_0\in V(D)$ and $B_0\subseteq V(D)$ such that $v_0\in \overline{B_0}$ and  $v_0$ $\lam_0$-bypasses $B_0$. 
Then there is a nonempty $C\subseteq \overline{B_0}$ such that $C$ is a $\lam$-component with $\lam_0-87\delta^{-2}\leq \lam\leq \lam_0$. 
\end{lemma}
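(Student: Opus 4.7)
My plan follows the sketch in Section~1.3: apply Lemma~\ref{SetReachedBypassed2} to every candidate vertex, pigeonhole the resulting $\lam$'s to get a common value on a large subclass, pick a vertex $v^*$ minimizing $|R_{v^*}|$, and argue via transitivity plus the hypergraph intersection lemma that almost every $u \in R_{v^*}$ reaches essentially the same set, so that $C = R_{v^*}$ with a few bad vertices removed is a $\lam$-component.

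If $|\overline{B_0}| \leq \delta n / 2$, then Lemma~\ref{SingleVertexComponent} applied to $v_0$ already yields $\{v_0\}$ as a $\lam_0$-component, so assume $|\overline{B_0}| > \delta n /2$. For each $v \in \overline{B_0}$ apply Lemma~\ref{SetReachedBypassed2} with input $\lam_0$ to obtain $R_v, B_v \subseteq V(D)$ and $\lam_v \in [\lam_0 - 42\delta^{-1}, \lam_0]$ with $v$ $(\lam_v-4)$-reaching $R_v$, $v$ $\lam_v$-bypassing $B_v$, and $|V(D) \setminus (R_v \cup B_v)| \leq \delta n/3$. Pigeonhole on the $42\delta^{-1}+1$ possible values of $\lam_v$ yields a value $\lam^*$ with $V^* := \{v \in \overline{B_0} : \lam_v = \lam^*\}$ of size $\Omega(\delta^2 n)$. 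Let $v^* \in V^*$ minimize $|R_{v^*}|$; if $|R_{v^*}| \leq \delta n/6$ then $|\overline{B_{v^*}}| \leq |R_{v^*}| + \delta n/3 \leq \delta n/2$ and Lemma~\ref{SingleVertexComponent} finishes, so assume $|R_{v^*}| > \delta n/6$. Without loss we may take $v^* \in R_{v^*}$ via Observation~\ref{ObservationReachingOneMore}.

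The heart of the proof is the claim that at most $O_\delta(\Delta_{\lam^*-4})$ vertices $u \in V^* \cap R_{v^*}$ fail $|R_u \symdiff R_{v^*}| \leq \delta n$. Minimality gives $|R_u| \geq |R_{v^*}|$, hence $|R_u \setminus R_{v^*}| \geq |R_{v^*} \setminus R_u|$, so $|R_u \symdiff R_{v^*}| \leq 2 |R_u \setminus R_{v^*}|$; it thus suffices to bound $|R_u \setminus R_{v^*}|$. If many $u$'s had $|R_u \setminus R_{v^*}| > \delta n /2$, then each such $u$ $(\lam^*-4)$-reaches (by monotonicity to subsets) the set $T_u := R_u \cap B_{v^*}$, of size $\geq \delta n/2 - \delta n/3 = \delta n/6$. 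Apply Lemma~\ref{LargeIntersectionInHypergraph} with $t := \Delta_{\lam^*-4} + 1$ to the $T_u$'s: the tower growth of the parameter hierarchy forces $(\delta/12)^t > \gamma_{\lam^*}$, so $t$ of the $u_i$'s share a common subset $W \subseteq B_{v^*}$ with $|W| \geq \gamma_{\lam^*} n$. Each $u_i$ $(\lam^*-4)$-reaches $W$, so Lemma~\ref{Transitivity} applied with $v^*$'s $(\lam^*-4)$-reach of $R_{v^*}$ gives $v^*$ $(\lam^*-3)$-reaches $W$; by monotonicity of reaching, $v^*$ $\lam^*$-reaches $W \subseteq B_{v^*}$, contradicting $v^*$'s $\lam^*$-bypass of $B_{v^*}$.

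Let $C := \{u \in V^* \cap R_{v^*} : |R_u \symdiff R_{v^*}| \leq \delta n\}$; then $v^* \in C$ so $C$ is non-empty, and $C \subseteq V^* \subseteq \overline{B_0}$. For each $u \in C$ take the candidate set to be $R_u$ itself: $u$ $\lam$-reaches $R_u$ and $(\lam-3)$-bypasses $\overline{R_u}$ for $\lam = \lam^* - O(1)$ by monotonicity of reaching together with Lemma~\ref{LemmaUnionBypass} applied to $B_u$ and the small set $V(D) \setminus (R_u \cup B_u)$. The main obstacle I foresee is checking $|R_u \symdiff C| \leq \delta n$: the bound $|R_u \symdiff R_{v^*}| \leq \delta n$ is established, so it remains to control $|R_{v^*} \symdiff C| = |R_{v^*} \setminus C|$, whose dominant contribution is the pigeonhole residue $|R_{v^*} \setminus V^*|$. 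I would handle this either by running an analogous transitivity-plus-intersection argument showing that for most $u \in V^* \cap R_{v^*}$ one has $|R_u \cap \overline{V^*}| \leq \delta n/2$ (and adding the remaining bad vertices to the exclusion defining $C$), or by intersecting $V^*$ with $R_{v_0}$ at the outset, using $v_0$'s bypass of $B_0$ to force the pigeonhole residue to live in a small region. Either way the parameter losses from pigeonhole ($\sim 42\delta^{-1}$), monotonicity, and the occasional union-bypass step total $O(\delta^{-1})$, fitting comfortably into the $87\delta^{-2}$ budget and placing the final $\lam$ within $[\lam_0 - 87\delta^{-2}, \lam_0]$.
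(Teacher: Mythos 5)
The proposal takes a genuinely different route from the paper, so let me first note the contrast. The paper constructs a \emph{maximal-bypass} vertex $v'$ and set $B'\supseteq B_0$ (Claim~\ref{MaximalBypass}), chosen so that no $u\in\overline{B'}$ can $(\lam'-43\delta^{-1})$-bypass substantially more than $B'$; it then applies Lemma~\ref{SetReachedBypassed2} to $v'$, defines $S$ as the vertices of $R$ that reach large pieces of $B\cup B'$, and sets $C=R\setminus(S\cup B'\cup B)$. The maximality of $B'$ is the engine that makes $|C\setminus R_v|$ small for \emph{every} $v\in C$. You instead apply Lemma~\ref{SetReachedBypassed2} to all of $\overline{B_0}$, pigeonhole on the $\lam$-value to get a subclass $V^*$, and minimize $|R_{v^*}|$ over $V^*$. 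The transitivity-plus-intersection argument you give for bounding the number of bad $u\in V^*\cap R_{v^*}$ mirrors the paper's bound on $|S|$, and it is correct in spirit.

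However, there are two genuine gaps at the end, and the second is structural rather than a matter of constants.

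First, the claim that $u\in C$ ``$(\lam-3)$-bypasses $\overline{R_u}$ ... by Lemma~\ref{LemmaUnionBypass} applied to $B_u$ and the small set $V(D)\setminus(R_u\cup B_u)$'' does not go through: Lemma~\ref{LemmaUnionBypass} requires $u$ to bypass \emph{each} of the two sets, and you have no reason to believe $u$ bypasses the residue $V(D)\setminus(R_u\cup B_u)$. That residue has size up to $\delta n/3$, which is far above $\gamma_\lam n$, so Observation~\ref{ObservationReachBypassExistence}(ii) cannot supply the bypass. The paper avoids this by re-applying Lemma~\ref{SetReachedBypassed1} to each $v\in C$, which yields a clean complementary pair $(R_v,\overline{R_v})$ with the bypass built in.

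Second, and more seriously, the definition $C\subseteq V^*\cap R_{v^*}$ cannot satisfy $|R_u\symdiff C|\leq\delta n$. Pigeonholing gives $|V^*|=\Omega(\delta^2 n)$, but $V^*$ can still be a tiny fraction of $R_{v^*}$ (whose size is only bounded below by $\delta n/6$ and could be $\Omega(n)$). Then $|R_u\symdiff C|\geq |R_u|-|V^*|$ can be a constant fraction of $n$. This is not slack you can absorb into the $87\delta^{-2}$ parameter budget. The paper's $B'$-maximality is precisely what rules out this scenario: for every $v\in C\subseteq\overline{B'}$ one gets $|\overline{R_v}\setminus B'|\leq\delta n/2$, so $R_v$ nearly covers $\overline{B'}\supseteq C$. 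Your two proposed remedies (showing $|R_u\cap\overline{V^*}|$ is small, or intersecting $V^*$ with $R_{v_0}$) do not address this: the pigeonhole class $V^*$ is carved out by the $\lam$-labels, which bear no structural relation to the reached sets $R_u$, so there is no reason for $R_u$ to live inside $V^*$. You would need some analogue of a maximality argument that forces the $R_u$'s to align with the intended component, which is exactly the missing ingredient your plan would have to supply.
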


\begin{proof}
We start with the following claim.
\begin{claim}\label{MaximalBypass}
There is a vertex $v'\in \overline {B_0}$, $\lam'\in [\lam_0-86\delta^{-2},\lam_0]$, and a set $B'\supseteq B_0$ with the following properties.
\begin{itemize}
\item $v'$ $\lam'$-bypasses $B'$. 
\item For every $u \in \overline{B'}$, if there is a set $B_u\supset B'$ such that $u$ $(\lam'-43\delta^{-1})$-bypasses $B_u$, then $|B_u|<|B'|+\delta n/2$.
\end{itemize} 
\end{claim}
\begin{proof}
Using $B_0$, $v_0$, and $\lam_0$ from the lemma, we define $B_1, \dots, B_m$, $v_1, \dots, v_m$, and $\lam_1, \dots, \lam_m$ as follows.
\begin{itemize}
\item For each $i$, set $\lam_{i+1}=\lam_i-43\delta^{-1}$. 
\item For each $i$, if possible, choose a vertex $v_{i+1}\in\overline{B_i}$ and a set $B_{i+1}\supset B_i$  such that  $v_{i+1}$ $\lam_{i+1}$-bypasses $B_{i+1}$ and $|B_{i+1}|\geq |B_i|+\delta n/2$.
\item Otherwise, if no such pair of $v_{i+1}$ and $B_{i+1}$ exists, then stop with $m=i$.
\end{itemize}
Notice that since  $|B_{i+1}|\geq |B_i|+\delta n/2$ for $i<m$, we stop with $m\leq 2\delta^{-1}$. 
Let $\lam'=\lam_m$, $v'=v_m$, and $B'=B_m$.
Since $m\leq 2\delta^{-1}$, we have $\lam'= \lam_0-43\delta^{-1}m\geq \lam_0-86\delta^{-2}$.
We have $B'=B_m\supseteq B_{m-1}\supseteq \dots\supseteq B_0$ and $v'\in \overline{B_{m-1}}\subseteq \overline{B_0}$ as required.
The vertex $v'$ $\lam'$-bypasses $B'$ by choice of $v_m$ and $B_m$.
The fact that ``for every $u \in \overline{B'}$, if there is a set $B_u\supset B'$ such that $u$ $(\lam'-43\delta^{-1})$-bypasses $B_u$, then $|B_u|< |B|+\delta n/2$'' is equivalent to the fact that we stopped at $m$. 
\end{proof}

Apply Lemma~\ref{SetReachedBypassed2} to $v'$ and $\lam'$ in order to obtain sets $R$ and $B$ and $\lam''\in [\lam'-42\delta,\lam']$ such that $|V(D)\setminus(R\cup B)|\leq \delta n/3$, $v'$ $(\lam''-4)$-reaches $R$, and $v'$ $\lam''$-bypasses $B$.
By Lemma~\ref{LemmaUnionBypass} and the monotonicity of bypassing, $v'$  $(\lam''-1)$-bypasses $B\cup B'$.

We make the following definition
$$S=\{x\in  R: x \ (\lam''-2)\text{-reaches some } T \subseteq B\cup B' \text{ with } |T|\geq \gamma_{\lam''-3} n\}.$$

 From the definition of $S$ and the monotonicity of reaching, we have that for every $v\in R\setminus S$ the vertex $v$ $(\lam'' -3)$-bypasses $B\cup B'$.
Using Lemma~\ref{LargeIntersectionInHypergraph} and the ``transitivity of reaching'' we show that $S$ is small.
\begin{claim}
$|S|\leq 4\Delta_{\lam''}\gamma_{\lam''}^{-1}$.
\end{claim}
\begin{proof}
Suppose for the sake of contradiction that $|S|> 4\Delta_{\lam''}^2\gamma_{\lam''}^{-1}$.
For each $s\in S$, choose some set $T_s\subseteq B\cup B'$ with $|T_s|=\gamma_{\lam''-3} n$ which is $(\lam''-2)$-reached by $s$.
Let $\mathcal H=\{T_s:s\in S\}$. Notice that $\mathcal H$ is an $(\gamma_{\lam''-3}n)$-uniform multihypergraph with $|S|$ edges. 
Notice that  $|S|> 4\Delta_{\lam''}\gamma_{\lam''}^{-1}$ implies $\gamma_{\lam''-3}/2\geq 2(\Delta_{\lam''-2}+1)/|S|$. Therefore we can apply Lemma~\ref{LargeIntersectionInHypergraph} to $\mathcal H$ with $t=\Delta_{\lam''-2}+1$ and $\gamma=\gamma_{\lam''-3}$ in order to find $\Delta_{\lam''-2}+1$ sets $T'_0, \dots, T'_{\Delta_{\lam''-2}}\in \mathcal H$ with 
$$\left|\bigcap_{i=0}^{\Delta_{\lam''-2}} T'_i\right|\geq \left(\frac{\gamma_{\lam''-3}}2\right)^{\Delta_{\lam''-2}+1} n>\gamma_{\lam''-1}n.$$
The second inequality comes from $\gamma_{\lam''-1}^{-1}=\twr(\lam''+1)\geq (2\twr(\lam''-1))^{\twr(\lam''-2)+1}=(2/\gamma_{\lam''-3})^{{\Delta_{\lam''-2}+1}}$.
By Lemma~\ref{Transitivity} applied with $\lam=\lam''-2$, $v=v'$, $R=R$, $W=\bigcap_{i=0}^{\Delta_{\lam''-2}} T'_i$, and $x_i$ the vertex of $S$ which $(\lam''-2)$-reaches $T'_i$,  we get that  $v'$ $(\lam''-1)$-reaches $\bigcap_{i=0}^{\Delta_{\lam''-2}} T'_i$. This contradicts $v'$ $(\lam''-1)$-bypassing $B\cup B'$.
\end{proof}

Let $C= R\setminus (S\cup B'\cup B)$ and $\lam=\lam''-2$. 
 Since $C\subseteq \overline {B'}$ and $B'\supseteq B_0$ we have $C\subseteq  \overline{B_0}$. From the definitions of $\lam'$ and $\lam''$ we have that $\lam_0-87\delta^{-2}\leq \lam\leq \lam_0$.
 \begin{claim}
$C$ is a $\lam$-component.
 \end{claim}
 \begin{proof}
For each $v\in C$ apply Lemma~\ref{SetReachedBypassed1} to get a set $R_v$ which is $\lam$-reached by $v$ and with $\overline{R_v}$ $(\lam-3)$-bypassed by $v$. To prove the claim, it is enough to show that $|R_v\symdiff C|\leq \delta n$. We'll do this by showing $|R_v\setminus C|\leq \delta n/2$ and $|C\setminus R_v|\leq \delta n/2$.

First we show that  $|R_v\setminus C|\leq \delta n/2$.
Notice that $C=R\cap \overline{S}\cap \overline {B'}\cap \overline B=(R\cup B)\cap \overline{S}\cap \overline {B'}\cap \overline B$.
Notice that since $v\in R\setminus S$ and $\lam=\lam''-2$ we have $|R_v\cap (B\cup B')|\leq\gamma_{\lam''-3} n$. 
Combining these with $|S|\leq 4\Delta_{\lam''}\gamma_{\lam''}^{-1}$ and $|V(D)\setminus(R\cup B)|\leq \delta n/3$ we get
\begin{align*}
|R_v\setminus C|&=|R_v\setminus ((R\cup B)\cap \overline{S}\cap \overline {B'}\cap \overline B)|\\
&=|R_v\cap ((\overline{R \cup B})\cup S\cup B'\cup B)|\\
&\leq|R_v\cap (\overline{R \cup B})|+|R_v\cap S|+|R_v\cap (B\cup B')|\\
&\leq |V(D)\setminus (R\cup B)|+|S|+|R_v\cap (B\cup B')|\\
&\leq \delta n/3+4\Delta_{\lam''}\gamma_{\lam''}^{-1}+\gamma_{\lam''-3} n\leq \delta n/2. 
\end{align*}
The last inequality comes from $n\geq N_0$ and $\lam \leq \lam_\MAX$.

Next we show that $|C\setminus R_v|\leq \delta n/2$.
Using  $v\in R\setminus S$ and the monotonicity of bypassing we get that $v$ $(\lam-3)$-bypasses ${B'}$.
By Lemma~\ref{LemmaUnionBypass}, $v$  $(\lam-4)$-bypasses $\overline{R_v}\cup B'$.
Now using the monotonicity of bypassing we have a vertex $v\in \overline {B'}$ and a set $\overline {R_v}\cup B'\supseteq B'$ such that $v$ $(\lam'-43\delta^{-1})$-bypasses $\overline {R_v}\cup B'$.
From Claim~\ref{MaximalBypass} we have $|\overline{R_v}\cup B'|\leq |B'|+\delta n/2$  which implies $|\overline{R_v}\setminus  B'|\leq \delta n/2$.
This gives us 
$$|C\setminus R_v|= |C\cap \overline{R_v}|\leq|\overline{R_v}\setminus B'|+|C\cap B'|= |\overline{R_v}\setminus B'|\leq \delta n/2.$$
\end{proof}
We have now proved that $C$ satisfies all the requirements of the lemma aside from ``$C$ is nonempty''. Thus, for the remainder of the proof we can assume that $C$ is empty, or equivalently $R\subseteq  S\cup B'\cup B$. We'll show that $\{v'\}$ is a $\lam$-component  satisfying the conditions of the lemma. Notice that $\{v'\}\subseteq \overline{B_0}$ holds as a consequence of the definition of $v'$ in Claim~\ref{MaximalBypass}.

By $|S|\leq 4\Delta_{\lam''}\gamma_{\lam''}^{-1}\leq \gamma_{\lam+1}n$ and Observation~\ref{ObservationReachBypassExistence} (ii), $v$ $(\lam+1)$-bypasses $S$. By Lemma~\ref{LemmaUnionBypass}, $v'$ $\lam$-bypasses $S\cup B'\cup B$.  From  $R\subseteq  S\cup B'\cup B$ we obtain $|V(D)\setminus (S\cup B'\cup B)|=|V(D)\setminus (R\cup S\cup B'\cup B)|\leq |V(D)\setminus (R\cup B)|\leq \delta n/3$. 
By Lemma~\ref{SingleVertexComponent} applied with $v=v'$, $\lam=\lam$,  and $\hat B=S\cup B'\cup B$, we have that $C'=\{v'\}$ is a $\lam$-component which satisfies the conditions of the lemma.
\end{proof}

By iteratively applying Lemma~\ref{ComponentExistence} we can find a sequence of $\lam$-components for decreasing $\lam$. We'll use this sequence in the proof of Theorem~\ref{TheoremDigraph}.
\begin{lemma}\label{ComponentTower}
Fix $m=4\delta^{-1}$ and let $D$ be a  labelled digraph on $n \geq N_0$ vertices.
For any $v\in V(D)$  we can choose $C_1, \dots, C_m$ and $ \lam_1, \dots,  \lam_m$ such that for each $i$, $C_i$ is a $\lam_i$-component with  $|C_i\setminus C_{i+1}|\leq \delta n$ and $1\leq \lam_{i+1}-88\delta^{-2}\leq \lam_{i}\leq \lam_{i+1}-9\leq \lam_\MAX$ for $i=1, \dots, m$.

In addition there is a length $\leq d_{\lam_\MAX/2}$ switching path which starts at $v$ and passes through all of $C_1, \dots, C_m$.
\end{lemma}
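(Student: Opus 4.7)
The plan is to build the components $C_m, C_{m-1}, \ldots, C_1$ by downward induction on $i$, maintaining along the way a switching path from $v$ that visits each newly-constructed component. First I choose the target value of $\lam_m$ to be of moderate size---specifically, small enough compared to $\lam_\MAX$ that $\lam_m + 3 + \log_4 m \leq \lam_\MAX/2$ (for the eventual length bound) and large enough that $\lam_m - 88\delta^{-2}(m-1) \geq 1$ (so every $\lam_i$ stays at least $1$). Since $\lam_\MAX = 4^{\delta^{-3}}$ dwarfs any polynomial in $\delta^{-1}$, taking $\lam_m$ of order $\delta^{-3}$ easily suffices.

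For the base case, apply Lemma~\ref{SetReachedBypassed1} to $v$ at parameter $\lam_m$ to obtain a set $R^*$ with $v$ $(\lam_m+3)$-reaching $R^*$ and $v$ $\lam_m$-bypassing $\overline{R^*}$. Then Lemma~\ref{ComponentExistence} with $v_0 = v$, $B_0 = \overline{R^*}$, and input parameter $\lam_m$ produces $C_m \subseteq R^*$, a $\lam$-component for some $\lam$ in the interval guaranteed by the lemma; relabel this $\lam$ as $\lam_m$. The $(\lam_m+3)$-reach of $v$ applied to $C_m \subseteq R^*$ (with no forbidden labels) gives a switching path $P_m$ from $v$ to some $u_m \in C_m$ of length at most $d_{\lam_m+3}$.

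For the inductive step, suppose we have a $\lam_{i+1}$-component $C_{i+1}$ and a switching path $P_{i+1}$ from $v$ ending at $u_{i+1} \in C_{i+1}$. The definition of $\lam_{i+1}$-component yields a set $R_{u_{i+1}}$ with $|R_{u_{i+1}} \symdiff C_{i+1}| \leq \delta n$ such that $u_{i+1}$ $\lam_{i+1}$-reaches $R_{u_{i+1}}$ and $u_{i+1}$ $(\lam_{i+1}-3)$-bypasses $\overline{R_{u_{i+1}}}$; by the monotonicity of bypassing (Observation~\ref{ObservationHereditary}), the bypassing persists at any lower level. Choose that lower level so that Lemma~\ref{ComponentExistence} applied with $v_0 = u_{i+1}$ and $B_0 = \overline{R_{u_{i+1}}}$ produces $C_i \subseteq R_{u_{i+1}}$ whose parameter $\lam_i$ lies in $[\lam_{i+1} - 88\delta^{-2}, \lam_{i+1} - 9]$. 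The inclusion $C_i \subseteq R_{u_{i+1}}$ immediately gives $|C_i \setminus C_{i+1}| \leq |R_{u_{i+1}} \setminus C_{i+1}| \leq \delta n$, as required. To extend the path, use the $\lam_{i+1}$-reach of $u_{i+1}$ on $R_{u_{i+1}} \supseteq C_i$ while forbidding the labels $\underline{P_{i+1}}$: since $|\underline{P_{i+1}}|$ is bounded by a small multiple of the accumulated length and $k_{\lam_{i+1}}$ is astronomically larger, this yields a switching path $Q_i$ from $u_{i+1}$ to some $u_i \in C_i$ of length at most $d_{\lam_{i+1}}$. Set $P_i = P_{i+1} + Q_i$, a switching path by Lemma~\ref{LemmaConcatenatePaths}.

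The total length of $P_1$ is then bounded by $d_{\lam_m+3} + \sum_{i<m} d_{\lam_{i+1}} \leq m \cdot d_{\lam_m+3}$, which by the choice of $\lam_m$ is at most $d_{\lam_\MAX/2}$. The main obstacle to making this plan fully rigorous lies in the last step of the inductive argument: the $\lam_{i+1}$-reach of $u_{i+1}$ only guarantees switching paths to all but $\Delta_{\lam_{i+1}}$ vertices of $R_{u_{i+1}}$, and if $|C_i| \leq \Delta_{\lam_{i+1}}$---as can happen when Lemma~\ref{ComponentExistence} falls into its degenerate branch and returns a singleton---every vertex of $C_i$ could be an exception, so there is no guarantee that $Q_i$ actually lands inside $C_i$. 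Overcoming this requires either bounding $|C_i|$ from below (for instance via Lemma~\ref{ComponentLarge} combined with a degree hypothesis on $u_{i+1}$) or tracing through the construction inside Lemma~\ref{ComponentExistence} and showing that the singleton $v'$ it produces is, by virtue of how it was selected, switching-reachable from $u_{i+1}$ via a short path avoiding $\underline{P_{i+1}}$.
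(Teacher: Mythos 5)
The gap you flag at the end is genuine, and the base case has the same problem: when Lemma~\ref{ComponentExistence} returns a very small component (its degenerate singleton branch), the $\lam_{i+1}$-reach guarantee can exclude all of $C_i$, so nothing forces $Q_i$ to land in $C_i$. The paper's fix is to reorder the operations so that the component lies inside the reachable set \emph{by construction}. Concretely: before invoking Lemma~\ref{ComponentExistence}, use the $\lam_{i+1}$-reach of $u_{i+1}$ (with $\underline{P_{i+1}}$ as the forbidden labels) to extract the subset $R'_{i+1}\subseteq R_{u_{i+1}}\cup\{u_{i+1}\}$ of vertices that are actually hit by length-$\leq d_{\lam_{i+1}}$ switching paths avoiding $\underline{P_{i+1}}$; by definition of reaching, $|(R_{u_{i+1}}\cup\{u_{i+1}\})\setminus R'_{i+1}|\leq\Delta_{\lam_{i+1}}$, and $u_{i+1}\in R'_{i+1}$ via the trivial one-vertex path. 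Since $u_{i+1}$ $(\lam_{i+1}-3)$-bypasses $\overline{R_{u_{i+1}}}$ and $\Delta_{\lam_{i+1}}$ is tiny, Observation~\ref{ObservationReachBypassExistence}(ii) and Lemma~\ref{LemmaUnionBypass} give that $u_{i+1}$ bypasses $\overline{R'_{i+1}}$ at a slightly lowered parameter. Now feed $B_0=\overline{R'_{i+1}}$ (not $\overline{R_{u_{i+1}}}$) into Lemma~\ref{ComponentExistence}; its conclusion $C_i\subseteq\overline{B_0}=R'_{i+1}$ then guarantees that \emph{every} vertex of $C_i$, in particular whichever $u_i$ you choose, is the endpoint of a short switching path from $u_{i+1}$ avoiding $\underline{P_{i+1}}$, so $P_i=P_{i+1}+Q_i$ is a switching path by Lemma~\ref{LemmaConcatenatePaths}.

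Two smaller points. Your second suggested remedy---opening up the internal construction of Lemma~\ref{ComponentExistence} to argue the singleton $v'$ it produces is reachable---is not what the paper does and is unnecessary once $B_0$ is chosen as above; the component's location is controlled entirely from outside. And the paper initializes with $v_{m+1}=v$, the trivial path $P_{m+1}=\{v\}$, and $\lam_{m+1}=\lam_\MAX/4$, then generates $C_m$ by the same inductive step rather than treating it specially; this sidesteps the base-case version of the gap. Your choice of $\lam_m$ of order $\delta^{-3}$ can be made to work, but you would then need to verify that each $\lam_0$ fed into Lemma~\ref{ComponentExistence} remains $\geq 87\delta^{-2}$ down to the last step; starting from $\lam_\MAX/4$ makes that check immediate.
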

\begin{proof}
We will choose vertices $v_{1}, \dots, v_{m+1}$, paths $P_{1}, \dots, P_{m+1}$, sets $R_{1}, \dots, R_{m+1}$, numbers $\lam_{1}, \dots, \lam_{m+1}$, and components $C_1, \dots, C_m$. 
They will have the following properties.
\begin{enumerate}[(i)]
\item For $i\leq m$, $1\leq \lam_{i+1}-88\delta^{-2}\leq \lam_{i}\leq \lam_{i+1}-9\leq\lam_\MAX/4$.
\item For $i\leq m+1$, $v_i$ $\lam_i$-reaches $R_i$ and  $v_i$ $(\lam_i-3)$-bypasses $\overline R_i$.
\item For $i\leq m+1$, $P_i$ is a switching path from $v$ to $v_i$ passing through $v_{i+1}, \dots, v_{m}$. Also $|P_i|\leq d_{\lam_{i+1}}+d_{\lam_{i+2}}+\dots +d_{\lam_{m}}$.
\item For $i\leq m$, $C_i$ is a $\lam_i$-component with  $C_i\subseteq R_{i+1}$, $v_i\in C_i$,  and $|C_i\symdiff R_i|\leq \delta n$.
\end{enumerate}
Once we have constructed these sequences, then it is easy to see that the components  $C_1, \dots, C_m$, the numbers $\lam_1\dots, \lam_m$, and the path $P_1$ satisfy the conditions of the lemma. Indeed $C_i$ is a $\lam_i$-component with  $|C_i\setminus C_{i+1}|\leq |R_{i+1}\setminus C_{i+1}|\leq |R_{i+1}\symdiff C_{i+1}|\leq \delta n$ by (iv). By (i), (iii), and (iv) the path $P_1$ is a length $\leq m d_{\lam_m}\leq md_{(\lam_\MAX/4)}\leq d_{\lam_\MAX/2}$ switching path starting from $v$ which passes through the vertex $v_i\in C_i$ for $i=1, \dots, m$.

We will construct $v_i, P_i, R_i, \lam_i$, and $C_i$ in reverse order starting with $i=m+1$ and ending with $i=1$.
Let $v_{m+1}=v$, $P_{m+1}=\{v\}$, and $\lam_{m+1}=\lam_\MAX/4$. Use Lemma~\ref{SetReachedBypassed1} to find a set $R_{m+1}$  such that $v_{m+1}$ $(\lam_{m+1})$-reaches $R_{m+1}$ and $(\lam_{m+1}-3)$-bypasses $\overline {R_{m+1}}$. By construction, conditions (i) -- (iii) are satisfied for $v_{m+1}, P_{m+1}, R_{m+1}$, and $\lam_{m+1}$. Condition (iv) doesn't need to be checked since we do not have a component $C_{m+1}$.

For each $i\leq m+1$, suppose that we have constructed $v_{i}, P_{i}, R_{i}$, and $\lam_{i}$. We build $v_{i-1}, P_{i-1}, R_{i-1}, \lam_{i-1}$, and $C_{i-1}$ as follows.

By the monotonicity of bypassing and Observation~\ref{ObservationReachingOneMore}, $v_i$ $\lam_i$-reaches $R_i\cup \{v_i\}$ and $(\lam_i-3)$-bypasses $\overline{R_i\cup \{v_i\}}$.
Since  $v_i$ $\lam_i$-reaches $R_i\cup \{v_i\}$ and $|\underline{P_i}|\leq 3id_{\lam_{m+1}}\leq k_{\lam_i}$, there is a subset $R'_i\subseteq R_i\cup \{v_i\}$ of order at least $|R_i|-\Delta_{\lam_i}$ such that there are length $\leq d_{\lam_i}$ switching paths from $v_i$ to all $r\in R'_i$ avoiding $\underline{P_{i}}$. Without loss of generality we can assume $v_i\in R_i'$ (since there is a length $0$ switching path from $v_i$ to $v_i$ avoiding any  set of labels.)
By $\Delta_{\lam_i}\leq \gamma_{\lam_i-3}n$, Observation~\ref{ObservationReachBypassExistence} (ii), and Lemma~\ref{LemmaUnionBypass}, $v_i$ $(\lam_i-4)$-bypasses $\overline {R'_i}$. 
Apply Lemma~\ref{ComponentExistence} with $v_0=v_{i}$, $\lam_0=\lam_{i}-9$, and $B_0=\overline{R'_i}$ in order to find a nonempty $\lam$-component $C_{i-1}$ contained in $R'_i$ for some $\lam_i-9-87\delta^{-2}\leq\lam\leq \lam_i-9$. Let $\lam_{i-1}=\lam$.
Let $v_{i-1}$ be any vertex in $C_{i-1}$.
Since  $v_{i-1}\in C_{i-1}\subseteq R'_i$, there is a length $\leq d_{\lam_{i}}$ switching path $Q$ from $v_{i}$ to  $v_{i-1}$ which avoids $\underline{P_{i}}$. Let $P_{i-1}=P_{i}+Q$.
Let   $R_{{i-1}}$ be the set from the definition of ``$\lam_{i-1}$-component'' such that $|R_{i-1}\symdiff C|\leq \delta n$, $v_{i-1}$ $\lam_{i-1}$-reaches $R_{i-1}$, and $(\lam_{i-1}-3)$-bypasses $\overline{R_{i-1}}$. 

We claim that $v_{i-1}, P_{i-1}, R_{i-1}, \lam_{i-1}$, and $C_{i-1}$ satisfy properties (i) -- (iv). For property  (i) we
have $\lam_i-88\delta^{-2}\leq \lam_i-9-87\delta^{-2}\leq \lam_{i-1}\leq \lam_{i}-9$ and also  $\lam_{i-1}\geq \lam_{m+1}-(m-i+1)88\delta^{-2}\geq \lam_\MAX-252\delta^{-3}\geq 1$.
 Property (ii) holds since we chose $R_{i-1}$ so that $v_{i-1}$ $\lam_{i-1}$-reaches $R_{i-1}$ and $(\lam_{i-1}-3)$-bypasses $\overline{R_{i-1}}$.
To see that property (iii) holds, notice that it held for $P_{i}$, and that $Q$ is a length $\leq d_{\lam_{i-1}}$ switching path  avoiding $\underline{P_i}$  from $v_i$ to  $v_{i-1}$. Using Lemma~\ref{LemmaConcatenatePaths} we have that $P_{i-1}=P_i+Q$ is a length $e(P_i)+e(Q)\leq d_{\lam_{i-1}}+\dots+d_{\lam_m}$ switching path from $v$ to $v_{i-1}$.
For property (iv), $C_{i-1}$ being a $\lam_{i-1}$-component with  $C_{i-1}\subseteq R_{i}$ comes from our application of Lemma~\ref{ComponentExistence}, $v_{i-1}\in C_{i-1}$ comes from our choice of $v_{i-1}$, and $|C_{i-1}\symdiff R_{i-1}|\leq \delta n$ comes from the  choice of $R_{i-1}$.
\end{proof}

\subsection{Growth of $\lam$-components}\label{SectionComponentGrowth}
Notice that so far in Section~\ref{SectionDirectedGraphs}, ``amidstness'' has only come up in the statement of Theorem~\ref{TheoremDigraph}.  In this section we build a link between amidstness and $\lam$-components. 
First we will need a more precise notion of amidstness which incorporates the parameter $\lam$.
\begin{definition}
Let $u$ and $v$ be two vertices in an edge-labelled, directed graph $D$, $c$ a label, and $S$ a set of labels. We say that $c$ is $(\lam,  S)$-\emph{amidst} $u$ and $v$ if $c\not\in S$ and there is a length $\leq 2d_\lam$ switching path $P=(u,p_1,\dots, p_{d}, v)$ avoiding $S$ from $u$ to $v$ such that the following hold.
\begin{enumerate}[(i)]
\item There are no edges of $P$ labelled by $c$.
\item If $c$ is a vertex of $D$ then  $c$ is in $\{p_1, \dots, p_d, v\}$.
\end{enumerate}
\end{definition}

The following is an extension of Lemma~\ref{LemmaConcatenatePaths}. It shows that when we concatenate two switching paths, then the vertex at which we concatenate automatically becomes amidst the endpoints of the concatenated path.
\begin{lemma}\label{LemmaAmidstConcatenate}
In a labelled digraph $D$, let $S$ be a set of vertices,  $P$ a length $\geq 1$ and  $\leq d_\lam$ switching path from $u$ to $x$ which avoids $S$, and $Q$ a length $\leq d_\lam$ switching path from $x$ to $v$ which avoids $S$ and ${\underline P}$. Then $P+Q$ witnesses $x$ being $(\lam, S)$-amidst $u$ and $v$.
\end{lemma}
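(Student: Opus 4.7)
The plan is to verify each clause of the definition of $(\lam, S)$-amidst for the path $P+Q$ and the vertex $x$, using Lemma~\ref{LemmaConcatenatePaths} as the main ingredient. There are essentially four things to check: that $x \notin S$; that $P+Q$ is a switching path of length at most $2d_\lam$ avoiding $S$; that no edge of $P+Q$ is labelled by $x$; and that $x$ occurs as a non-starting vertex of $P+Q$.

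First I would note that $x \notin S$. Since $P$ is a switching path of length at least $1$ from $u$ to $x$ avoiding $S$, the vertex $x$ lies in $\underline P$ and is distinct from the starting vertex $u$; since $P$ avoids $S$ means $S\cap \underline P \subseteq \{u\}$, we conclude $x \notin S$. Next, the path $P+Q$ has length at most $2d_\lam$ by adding the given length bounds, and by Lemma~\ref{LemmaConcatenatePaths} (applied using the fact that $Q$ avoids $\underline P$) it is a switching path. To see that $P+Q$ avoids $S$, observe that its starting vertex is $u$, so labels and non-starting vertices of $P+Q$ lying in $S$ would have to come from $P$ or from $Q$; neither contributes any since $P$ avoids $S$ and $Q$ avoids $S$ with its own starting vertex $x \notin S$.

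The main point to check, and the only one that is not immediate, is that no edge of $P+Q$ is labelled by $x$. For edges of $Q$ this is clear: $x$ is a vertex of $P$, hence $x \in \underline P$, and $Q$ avoids $\underline P$. For edges of $P$ itself, suppose some edge $p_i p_{i+1}$ of $P$ were labelled by $x$. Since $P$ is a switching path and $x$ is a vertex of $D$, the switching-path condition forces $x = p_j$ for some $1 \leq j \leq i$; but $P$ is a path with distinct vertices and $x$ appears only as its last vertex, so we would need $j$ to be the final index, contradicting $j \leq i < $ final index. Hence no edge of $P$ is labelled by $x$ either.

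Finally, clause (ii) of the definition of amidstness holds because $x$ is the vertex at which $P$ and $Q$ meet, so it appears in $P+Q$ at position $\mathrm{length}(P) \geq 1$, making it a non-starting vertex, i.e.\ an element of $\{p_1,\dots,p_d,v\}$ in the notation of the definition. This completes the verification, so $P+Q$ witnesses $x$ being $(\lam,S)$-amidst $u$ and $v$. I do not expect any serious obstacle here --- the lemma is a bookkeeping statement, and the only slightly delicate point is ruling out edges of $P$ labelled by its own endpoint, which is handled by the switching-path definition combined with vertices of a path being distinct.
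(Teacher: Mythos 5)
Your proof is correct and follows essentially the same route as the paper: apply Lemma~\ref{LemmaConcatenatePaths} to see $P+Q$ is a switching path, note $x\notin S$ because $P$ avoids $S$ and $|P|\geq 2$, handle clause~(ii) because $x$ is a non-starting vertex, and rule out edges labelled by $x$ in $P$ via the switching-path condition together with $x$ being the last vertex of $P$, and in $Q$ because $Q$ avoids $\underline P\ni x$. The only difference is that you spell out the ``$P+Q$ avoids $S$'' step, which the paper attributes to Lemma~\ref{LemmaConcatenatePaths} without elaboration; your extra detail is accurate.
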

\begin{proof}
Using Lemma~\ref{LemmaConcatenatePaths} we have that $P+Q$ is a length $\leq 2d_\lam$ switching path avoiding $S$ from $u$ to $v$ which passes through $x$. 
We have $x \not\in S$ since $P$ avoids $S$, $P$ ends with $x$, and $|P|\geq 2$.
Part (ii) of the definition of $(\lam, S)$-amidst holds since $x\in P+Q$ and $x$ is not the starting vertex of $P$.
It remains to show that part (i) of the definition of $(\lam, S)$-amidst holds. 
Note that $P$ has no edges labelled by $x$ since $P$ is a switching path ending with $x$, and  $Q$ has no edges labelled by $x$ since $Q$ avoids ${\underline P}\ni x$. These imply that $P+Q$ has no edges labelled by $x$.
\end{proof}

Recall that the idea behind ``amidstness'' was to identify labels which can be used to extend switching paths. The next lemma makes this precise.
\begin{lemma}\label{LemmaAmidstAddOneVertex}
Let $x$ be a label in a labelled digraph $D$ which is $(\lam, S)$-amidst $v$ and $y$. Suppose that we have a vertex $z$ with $z\not\in S$ and the edge $yz$ present and labelled by $x$.
Then  there is a length $\leq 2d_\lam+1$ switching path $P$ from $v$ to $z$ avoiding $S$.
\end{lemma}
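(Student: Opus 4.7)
The plan is to append the given edge $yz$ to a witnessing switching path for the amidst condition. Concretely, since $x$ is $(\lam,S)$-amidst $v$ and $y$, unpack the definition to obtain a switching path $Q=(v=q_0,q_1,\dots,q_d,y)$ of length at most $2d_\lam$ from $v$ to $y$ that avoids $S$, has no edge labelled $x$, and satisfies $x\in\{q_1,\dots,q_d,y\}$ whenever $x\in V(D)$. The natural candidate is $P=Q+(y,z)$, which has length $|Q|+1\le 2d_\lam+1$, so all that remains is to verify that $P$ is an actual switching path, and that it avoids $S$.

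For $P$ to be a genuine path I first need $z$ not already to lie on $Q$. If it does, then let $j\ge 0$ be the (unique) index with $z=q_j$ and take $P$ to be the prefix $(v,q_1,\dots,q_j)$ of $Q$ (understood as the single vertex $P=\{v\}$ when $j=0$). Being a prefix of a switching path starting from the same vertex, $P$ is itself a switching path; since $\underline P\subseteq \underline Q$ and $Q$ avoids $S$, so does $P$. This handles the degenerate case with length $\le 2d_\lam<2d_\lam+1$, so for the rest of the argument I may assume $z\notin V(Q)$.

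Now I verify that $P=Q+(y,z)$ is a switching path. It is rainbow: $Q$ is rainbow, the appended edge has label $x$, and by the amidst hypothesis $x$ does not appear as a label on $Q$. For the second switching-path condition, the only new edge is $yz$, labelled $x$; if $x\notin V(D)$ there is nothing to check, and if $x\in V(D)$ then the amidst hypothesis gives some $1\le j\le d$ with $x=q_j$ or $x=y$, so $x$ appears in $P$ strictly before the new final edge as required. The older edges of $Q$ still satisfy the condition since $Q$ was a switching path.

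Finally $P$ avoids $S$: the vertex $z$ is not in $S$ by hypothesis, the new edge is labelled $x\notin S$ (this is built into the definition of $(\lam,S)$-amidst), and $Q$ already avoids $S$. So $S\cap \underline P\subseteq \{v\}$, as required. The main subtlety in the whole argument is therefore not any deep combinatorial point but the bookkeeping in the switching-path condition: one must use exactly the fact that the amidst definition forces $x$, when it is a vertex, to appear on $Q$ at a position other than the starting one, which is precisely what allows the label $x$ on the appended edge to be certified by an earlier vertex of $P$.
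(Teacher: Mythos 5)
Your proof is correct and follows essentially the same route as the paper: take the witnessing switching path $Q$ from $v$ to $y$, append the edge $yz$ (or truncate if $z$ already lies on $Q$), and verify the rainbow, vertex-label, and $S$-avoidance conditions using exactly the facts that the amidst definition guarantees. The only cosmetic difference is that you spell out the degenerate case $z=v$ and the case $x=y$ a little more explicitly than the paper does.
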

\begin{proof}
Since $x$ is $(\lam, S)$-amidst $u$ and $v$, there is a  length $\leq {2d_{\lam}}$ switching path $Q$ from $v$ to $y$ avoiding $S$ and having no edges labelled by $x$. In addition if $x$ is a vertex then  $x\in V(Q)\setminus \{v\}$.
If $z\in Q$, then we are done by choosing $P$ to be the subpath of $Q$ ending with $z$.
Otherwise we take $P=Q+z$ to get a path from $v$ to $z$.
To see that this is a switching path first notice that  the label of the last edge $yz$ is $x$ which is not present on the edges of $P$. In addition if $x$ is a vertex then $x\in V(Q)\setminus \{v\}= V(P)\setminus \{v,z\}$. Thus $P$ is a switching path. 
Notice that $x\not\in S$ since $x$ is $(\lam, S)$-amidst $v$ and $y$. Combining this with $z\not\in S$ and the fact that $Q$ avoided $S$ shows that $P$ avoids $S$.
Thus $P$ is a length $\leq 2d_\lam+1$ switching path from $v$ to $z$ avoiding $S$ as required.
\end{proof}

Recall that for a label $x$ and a vertex $v$,  $N^+_{\{x\}}(v)$ denotes the set of vertices $y$ with $vy$ an edge labelled by $x$. 
In a out-properly labelled digraph we always have $|N^+_{\{x\}}(v)|\in \{0,1\}$.
For a labelled digraph $D$, let 
$$L^+(D)=\{(x,y): \text{$x$ is a label and $y\in V(D)$ with $N^+_{\{x\}}(y)\neq  \emptyset$}\}.$$
Equivalently we have that $L^+(D)$ is the set of pairs $(x,y)$ with $x$ a label and $y\in V(D)$ such that there is an edge in $D$ starting at $y$ labelled by $x$.

The following lemma gives a connection between amidstness and $\lam$-components. It shows that for a $\lam$-component $C$, most triples of the form $(v,x,y)\subseteq C\times (X_0\cup C)\times C$ have $x$ amidst $v$ and $y$.
\begin{lemma}\label{MengerLemma}
Let   $D$ be an out-properly labelled digraph with $|D|=n\geq N_0$, $\lam\in \mathbb N$ with $3\leq \lam \leq \lam_\MAX$, $X_0$ the set of non-vertex labels of $D$, and $C$ a $\lam$-component. Fix a vertex $v \in C$. 
To every set of labels $S$ with $|S|\leq k_\lam-3d_\lam$ we can assign sets $C_S\subseteq C$ and $Z_S\subseteq (X_0\cup C_S)\times C_S$ with the following properties.
\begin{enumerate}[(i)]
\item For every $(x,y)\in Z_S$, the label $x$ is $(\lam, S)$-amidst $v$ and $y$.
\item $|C_S|\geq |C|-2\delta n$ and $|Z_S|\geq |(X_0\cup C_S)\times C_S|-4\delta n|C_S|$.
\item For two sets $S$ and $T$ we have $|L^+(D)\cap (Z_S\setminus Z_T)|\leq 6\Delta_\lam n$.
\end{enumerate}
\end{lemma}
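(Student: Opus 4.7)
My plan is to construct $C_S$ and $Z_S$ directly from the reaching properties guaranteed by $C$ being a $\lam$-component, then verify all three properties by counting and a case analysis. Since $C$ is a $\lam$-component and $v \in C$, there is a set $R_v$ with $|R_v \symdiff C| \le \delta n$ such that $v$ $\lam$-reaches $R_v$; set $C^* = C \cap R_v$, so $|C^*| \ge |C| - \delta n$. For each label set $S$ with $|S| \le k_\lam - 3d_\lam \le k_\lam$, apply the reaching of $v$ with forbidden set $S$ to obtain length $\le d_\lam$ switching paths $P_y^S$ from $v$ to every $y \in R_v$ except an exceptional set $E^S$ of size $\le \Delta_\lam$, and set $C_S = C^* \setminus E^S$, so $|C_S| \ge |C| - 2\delta n$. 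For each $x \in C_S$, the path $P_x^S$ exists, and applying the $\lam$-reaching of $x$ with forbidden set $S \cup \underline{P_x^S}$ (of size $\le k_\lam$) yields length $\le d_\lam$ switching paths $Q_{x,y}^S$ from $x$ to all but at most $\Delta_\lam$ vertices of $R_x$, each avoiding $S \cup \underline{P_x^S}$. The degenerate case $|C| \le 2\delta n$ is handled trivially by taking $C_S = Z_S = \emptyset$.

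I define $Z_S$ as the set of $(x,y) \in (X_0 \cup C_S) \times C_S$ with $x \notin S$ and $x \ne v$ such that: if $x \in X_0$ then $x \notin \underline{P_y^S}$, witnessed by $P_y^S$; if $x \in C_S$ then $y \in R_x$ and $Q_{x,y}^S$ exists, witnessed by $P_x^S + Q_{x,y}^S$ (valid by Lemma~\ref{LemmaAmidstConcatenate}, since $|P_x^S| \ge 1$ whenever $x \ne v$). Property (i) is immediate. For (ii), I count missed pairs: those with $x \in S$ contribute $|S||C_S|$; those with $x \in X_0 \setminus S$ but $x \in \underline{P_y^S}$ contribute $\le 3d_\lam |C_S|$; those with $x \in C_S$, $y \notin R_x$ or $y$ in the exceptional set for $x$ contribute $(\delta n + \Delta_\lam)|C_S|$; and the exclusion $x = v$ contributes $|C_S|$. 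Since $n \ge N_0 = \twr(2\lam_\MAX)$ dwarfs $k_\lam, d_\lam, \Delta_\lam$, the total is at most $3\delta n |C_S| < 4\delta n |C_S|$.

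The main obstacle is (iii). My plan is a case analysis on why $(x,y) \in L^+(D) \cap Z_S$ fails to lie in $Z_T$, partitioned by the first failing condition in $Z_T$: (A) $y \in E^T$, so $|E^T| \le \Delta_\lam$ possibilities for $y$ each contributing $\le n$ edges, total $\le \Delta_\lam n$; (A$'$) $y \in C_T$ but $x \in C^* \cap E^T$ (so $x \notin C_T$), similarly $\le \Delta_\lam n$ by out-properness; (B) $x \in T \setminus S$, giving $|T| \cdot n \le k_\lam n$; (C) $x \in X_0 \setminus T$ with $x \in \underline{P_y^T}$, at most $3d_\lam$ bad $x$ per $y$, totalling $\le 3d_\lam \cdot |C_T| \le 3d_\lam n$; (E) $x \in C_T \setminus T$ with $Q_{x,y}^T$ failing for some $y \in R_x$, giving $\le \Delta_\lam$ bad $y$'s per $x$ and $\le n\Delta_\lam$ overall. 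The crucial inequality that makes the whole argument close is $k_\lam, d_\lam \le \Delta_\lam$ for $\lam \ge 3$: since $\Delta_\lam = \twr(\lam)$ is a height-$\lam$ tower while $k_\lam \le \lam_\MAX 4^{\lam_\MAX^4}$ and $d_\lam = \lam_\MAX 4^\lam$ are merely exponential in $\lam_\MAX$, this dominance is substantial. Summing the five cases gives $|L^+(D) \cap (Z_S \setminus Z_T)| \le 5\Delta_\lam n \le 6\Delta_\lam n$.
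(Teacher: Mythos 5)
Your proof is correct and takes essentially the same route as the paper's: you use the $\lam$-reaching guaranteed by the $\lam$-component to fix switching paths $P_y^S$ from $v$ and $Q_{x,y}^S$ from $x$ (avoiding $S\cup\underline{P_x^S}$), define $Z_S$ from concatenations via Lemma~\ref{LemmaAmidstConcatenate}, and bound the differences by counting — exactly what the paper does. The only differences are cosmetic: the paper removes $v$ from $C_S$ at the outset ($C_S=R^v_S\setminus\{v\}$) instead of adding the side condition $x\neq v$ to $Z_S$, and it proves (iii) by splitting $Z_S$ into $Z_S^0\cup Z_S^1$ and bounding $|L^+(D)\cap(Z_S^0\setminus Z_T^0)|\le 2\Delta_\lam n$ and $|Z_S^1\setminus Z_T^1|\le 4\Delta_\lam|C_S|$, whereas you partition by failure mode — both arrive at the same $\le 6\Delta_\lam n$ bound using the same facts ($|E^T|\le\Delta_\lam$, $|T|\le k_\lam\le\Delta_\lam$, $|\underline{P_y^T}|\le 3d_\lam\le\Delta_\lam$).
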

\begin{proof}
Since $C$ is a $\lam$-component, for every $u\in C$ there is a set $R^u\subseteq C$ with $|R^u|\geq |C|-\delta n$ so that $u$ $\lam$-reaches $R^u$. 
For a set of labels $S$, let $R^u_S$ be the set of $y\in R^u$ for which there is a length $\leq d_\lam$ switching path from $u$ to $y$ avoiding $S$. For each $u$, $S$ and $y\in R^u_S$, fix such a switching path $P^{u,y}_S$.
Since $u$ $\lam$-reaches $R^u$, we have 
\begin{equation}\label{EqRuRuSDifference}
|R^u\setminus R^u_S|\leq \Delta_\lam \text{ for all $S$ with $|S|\leq k_\lam$}. 
\end{equation}
For a pair of sets $S$ and $T$ with $|S|, |T|\leq k_\lam$, we trivially have $|R^u_S\setminus R^u_T|\leq |R^u\setminus R^u_T|$ which implies 
$|R^u_S\setminus R^u_T|\leq \Delta_\lambda.$
We now define the sets $C_S$ and $Z_S.$
\begin{align*}
C_S&=R^v_S\setminus \{v\},\\
Z_S^0&=\{(x,y)\in X_0\times C_S: P^{v,y}_S \text{ avoids $x$ and $x\not\in S$}\},\\
Z_S^1&=\{(x,y)\in C_S\times C_S: y\in R^x_{S\cup {\underline{P^{v,x}_S}}}\},\\
Z_S&=Z_S^0\cup Z_S^1.
\end{align*}
Notice that we have  $Z_S\subseteq (X_0\cup C_S)\times C_S$. 
For $S$ with $|S|\leq k_\lam-3d_\lam$, (\ref{EqRuRuSDifference}) implies that $|C_S|\geq |R^v|-\Delta_\lam-1\geq |C|-2\delta n$ as required by part (ii) of the lemma. The next two claims prove ``$|Z_S|\geq |(X_0\cup C_S)\times C_S|-4\delta n|C_S|$'', completing the proof of part (ii).
\begin{claim}\label{ZS0DifferenceSmall}
For any $S$ with $|S|\leq  k_\lam-3d_\lam$ we have $|(X_0\times C_S)\setminus Z_S^0|\leq  k_\lam|C_S|$.
\end{claim}
\begin{proof}
We have $(X_0\times C_S)\setminus Z_S^0\subseteq  (S\times C_S)\cup \bigcup_{y\in C_S} \left({\underline {P^{v,y}_S}\times\{y\}}\right)$. Using  $|S|\leq k_\lam-3d_\lam$ and $|\underline {P^{v,y}_S}|\leq 3d_{\lam}$, this implies $|(X_0\times C_S)\setminus Z_S^0|\leq |S||C_S|+\sum_{y\in C_S } |{\underline {P^{v,y}_S}}|\leq k_\lam|C_S|.$
\end{proof}

\begin{claim}\label{ClaimZS1DifferenceSmall}
For any $S$ with $|S|\leq  k_\lam-3d_\lam$ we have $|(C_S\times C_S)\setminus Z_S^1|\leq 2\delta n|C_S|$.
\end{claim}
\begin{proof}
Notice that we have 
\begin{equation}\label{EQZS1Bound}
|Z^1_S|=\sum_{x\in C_S}|R^x_{S\cup {\underline {P^{v,x}_S}}}\cap C_S|=  \sum_{x\in C_S}(|C_S|- |C_S\setminus R^x_{S\cup {\underline {P^{v,x}_S}}}|).
\end{equation}
We will bound the second term by the following
$$|C_S\setminus R^x_{S\cup {\underline {P^{v,x}_S}}}|\leq |C\setminus R^x_{S\cup {\underline {P^{v,x}_S}}}|\leq |C\setminus R^x|+ |R^x\setminus R^x_{S\cup {\underline {P^{v,x}_S}}}|\leq \delta n+\Delta_\lam\leq 2\delta n.$$
The second last inequality comes from $|C\setminus R^x|\leq \delta n$, $|S\cup \underline{P_S^{v,x}}|\leq k_{\lam}$, and (\ref{EqRuRuSDifference}).
Plugging the above into (\ref{EQZS1Bound}) we get $|Z^1_S|\geq \sum_{x\in C_S}(|C_S|-2\delta n)= |C_S|^2-2\delta n|C_S|$ as 
required.
\end{proof}
From Claims~\ref{ZS0DifferenceSmall} and~\ref{ClaimZS1DifferenceSmall}, $n\geq N_0$, and $\lam\leq \lam_\MAX$ we get $|Z_S|\geq |(X_0\cup C_S)\times C_S|-k_\lam |C_S|-2\delta n|C_S|\geq |(X_0\cup C_S)\times C_S|-4\delta n |C_S|$, completing the proof of part (ii) of the lemma.

Next we prove part (i) of the lemma.
\begin{claim}
Let $S$ be a set of $\leq k_\lam-3d_\lam$ labels and $(x,y)\in Z_S$. Then the label $x$ is $(\lam, S)$-amidst $v$ and $y$.
\end{claim}
\begin{proof}
Suppose $x\in X_0$, or equivalently $(x,y)\in Z_S^0$. By definition of $Z_S^0$, we have $x\not\in S$. The path $P^{v,y}_S$ is a length $\leq d_\lam$ switching path from $v$ to $y$ avoiding $S$.  Since $(x,y)\in Z_S^0$, $P^{v,y}_S$ also avoids $x$, and so witnesses $x$ being $(\lam,S)$-amidst $v$ and $y$.

Suppose $x\in C_S$, or equivalently $(x,y)\in Z_S^1$. 
Since $x\in C_S\subseteq R^v_S$, recall that we have a length $\leq d_\lam$ switching path $P^{v,x}_S$ from $v$ to $x$ avoiding $S$. 
Since $v\not \in C_S$, we have $x\neq v$, which implies that $P^{v,x}_S$ has length $\geq 1$.
Since  $y\in R^x_{S\cup {\underline {P^{v,x}_S}}}$, we have  a length $\leq d_\lam$ switching path $P^{x,y}_{S\cup {\underline {P^{v,x}_S}}}$ from $x$ to $y$ avoiding $S$ and $\underline{P^{v,x}_S}$. By Lemma~\ref{LemmaAmidstConcatenate} we have that $Q=P^{v,x}_S + P^{x,y}_{S\cup {\underline {P^{v,x}_S}}}$ is a switching path  witnessing $x$ being $(\lam,S)$ amidst $v$ and $y$.
\end{proof}

The following two claims prove part (iii) of the lemma.
\begin{claim}\label{ClaimZ0SZ0TDifference}
For  $S$ and $T$ with $|S|, |T|\leq  k_\lam-3d_\lam$ we have $|L^+(D)\cap (Z_S^0\setminus Z_T^0)|\leq 2\Delta_\lam n$.
\end{claim}
\begin{proof}
We proceed as follows.
\begin{align*}
|(L^+(D)\cap  (Z_S^0\setminus Z_T^0)|
&\leq |L^+(D)\cap(X_0\times C_S\setminus Z_T^0)|\\
&\leq |L^+(D)\cap(X_0\times C_S\setminus X_0\times C_T)|+|L^+(D)\cap (X_0\times C_T\setminus Z_T^0)|\\
&\leq |L^+(D)\cap(X_0\times C_S\setminus X_0\times C_T)|+ k_\lam|C_S|\\
&= |L^+(D)\cap(X_0\times (C_S\setminus C_T))|+  k_\lam|C_S|\\
&\leq n |C_S\setminus C_T|+ k_\lam |C_S|\\
&\leq \Delta_\lam n+  k_\lam |C_S|\\
&\leq 2\Delta_\lam n.
\end{align*}
The first inequality comes from $Z_S^0\subseteq X_0\times C_S$.
The second inequality is an instance of $U\setminus W\subseteq (U\setminus V) \cup (V\setminus W)$. 
The third inequality comes from Claim~\ref{ZS0DifferenceSmall}.
The equality is an instance of $U\times V\setminus U\times W= U\times (V\setminus W)$.
The fourth inequality comes from $|L^+(D)\cap ((V(D)\cup X_0)\times U)|=  \sum_{u\in U}|N^+(u)|  \leq n|U|$ which holds for any set of vertices $U$. 
The fifth inequality comes from $|R^u_S\setminus R^u_T|\leq \Delta_\lambda.$ 
The sixth inequality holds since $k_\lam\leq \Delta_\lam$ for all $\lam\geq 3$.
\end{proof}

\begin{claim}\label{ClaimZ1SZ1TDifference}
For  $S$ and $T$ with $|S|, |T|\leq  k_\lam-3d_\lam$ we have $|Z_S^1\setminus Z_T^1|\leq 4\Delta_\lam |C_S|$.
\end{claim}
\begin{proof}
Using the definitions of $Z_S$ and $C_S$ we have the following.
\begin{align*}
|Z_S^1\setminus Z_T^1|
&=|((C_S\setminus C_T)\times C_S)\cap (Z_S^1\setminus Z_T^1)| + |((C_S\cap C_T)\times C_S)\cap (Z_S^1\setminus Z_T^1)|\\
&=\sum_{x\in C_S\setminus C_T}|C_S\cap R^x_{S\cup {\underline {P^{v,x}_S}}}|+\sum_{x\in C_S\cap C_T}|(C_S\cap R^x_{S\cup {\underline {P^{v,x}_S}}})\setminus (C_T\cap R^x_{T\cup {\underline {P^{v,x}_T}}})|\\
&\leq \sum_{x\in C_S\setminus C_T}|C_S|+\sum_{x\in C_S\cap C_T}|C_S\setminus C_T|+\sum_{x\in C_S\cap C_T}  |R^x_{S\cup {\underline {P^{v,x}_S}}} \setminus R^x_{T\cup {\underline {P^{v,x}_T}}}|\\
&\leq 4\Delta_\lam|C_S|.
\end{align*}
The first equality comes from $Z_S^1\setminus Z_T^1\subseteq C_S\times C_S$.
The second equality comes from $Z_S^1=\bigcup_{x\in C_S}\{x\}\times (C_S\cap R_{S\cup \underline{P_{S}^{v,x}}}^x)$ and $Z_T^1=\bigcup_{x\in C_T}\{x\}\times (C_T\cap R_{T\cup \underline{P_{T}^{v,x}}}^x)$.
The first inequality comes from $(U\cap W)\setminus (U'\cap W')\subseteq (U\setminus U')\cup (W\setminus W')$.
The second inequality comes from (\ref{EqRuRuSDifference}), $|S\cup \underline{P_S^{v,x}}|\leq k_\lam$, and $|T\cup \underline{P_T^{v,x}}|\leq k_\lam$.
\end{proof}
Claims~\ref{ClaimZ0SZ0TDifference} and~\ref{ClaimZ1SZ1TDifference} imply (iii), concluding the proof of the lemma.
\end{proof}

We now prove the main result of this section. The following lemma shows that if we have a digraph $D$ which doesn't satisfy Theorem~\ref{TheoremDigraph}, then $\lam$-components in $D$ have a special property---every vertex in a $\lam$-component $C$ $(\lam+1)$-reaches a set $R$ which is much larger than $C$. This lemma will later be combined with Lemma~\ref{ComponentTower} in order to show that all digraphs satisfy Theorem~\ref{TheoremDigraph}.
\begin{lemma}\label{ComponentGrowth}
Let $D$ be a out-properly labelled, digraph on $n\geq N_0$ vertices, $X_0$ the set of labels of $D$ which are not vertices, and  $3\leq \lam\leq \lam_\MAX$. Let $C$ be a $\lam$-component in $D$, $u\in V(D)$, and $P$ a length $\leq d_{\lam_\MAX}$ switching path from $u$ to some $v\in C$.
Then one of the following holds.
\begin{enumerate}[(i)]
\item There is a vertex $y\in C$, and a set of  labels $A$  amidst $u$ and $y$ such that $|N^+_A(y)|< |A|-|X_0|+\epsilon  n$.
\item $v$ $(\lam+1)$-reaches a set $R$ with 
$|R|\geq |C|+(\epsilon  -7\delta)n.$
\end{enumerate}
\end{lemma}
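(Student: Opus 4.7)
My approach is to assume the negation of (i) and derive (ii). That is, suppose that for every $y \in C$ and every set of labels $A$ amidst $u$ and $y$ we have $|N^+_A(y)| \geq |A| - |X_0| + \epsilon n$; I will then exhibit a set $R$ satisfying (ii).

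The first step is to apply Lemma~\ref{MengerLemma} to the $\lam$-component $C$ with base vertex $v$ and initial label set $S = \underline P$ (valid because $|\underline P| \leq 3d_{\lam_{\MAX}}$ is far below $k_\lam - 3d_\lam$). This produces $C_{\underline P} \subseteq C$ with $|C_{\underline P}| \geq |C| - 2\delta n$ and $Z_{\underline P} \subseteq (X_0 \cup C_{\underline P}) \times C_{\underline P}$ of size at least $(|X_0| + |C_{\underline P}|)|C_{\underline P}| - 4\delta n|C_{\underline P}|$, such that each $(x,y) \in Z_{\underline P}$ has $x$ being $(\lam, \underline P)$-amidst $v$ and $y$. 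For every such $(x,y)$, the witnessing switching path $Q\colon v \to y$ avoids $\underline P$; moreover $x \notin \underline P$ in both cases (if $x \in X_0$ this is built into the definition of $Z^0_{\underline P}$, and if $x \in C_{\underline P}$ then $x$ is the endpoint of a nontrivial switching path avoiding $\underline P$). Lemma~\ref{LemmaConcatenatePaths} then concatenates $P$ and $Q$ into a switching path $P + Q \colon u \to y$, which witnesses $x$ being amidst $u$ and $y$ (the required non-start vertex $x$ lies in $V(Q) \setminus \{v\}$). Setting $A_y := \{x : (x, y) \in Z_{\underline P}\}$, the standing hypothesis now yields $|N^+_{A_y}(y)| \geq |A_y| - |X_0| + \epsilon n$ for every $y \in C_{\underline P}$.

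Next, define $R$ by a high-multiplicity threshold. For each $(x, y) \in Z_{\underline P} \cap L^+(D)$, out-proper labelling forces $N^+_{\{x\}}(y)$ to be a singleton $\{\phi(x, y)\}$. Set $\rho(z) := |\{(x,y) \in Z_{\underline P} \cap L^+(D) : \phi(x,y) = z\}|$ and $R := \{z \in V(D) : \rho(z) \geq \tau\}$ with $\tau := 6\Delta_\lam n / \Delta_{\lam+1}$. Summation gives $\sum_z \rho(z) = \sum_{y \in C_{\underline P}} |N^+_{A_y}(y)| \geq |Z_{\underline P}| - |C_{\underline P}||X_0| + |C_{\underline P}|\epsilon n \geq |C_{\underline P}|\bigl(|C_{\underline P}| + (\epsilon - 4\delta)n\bigr)$. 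Since $\rho(z) \leq |C_{\underline P}|$ always and $\sum_{z \notin R} \rho(z) < n\tau$, one extracts
$$|R| \;\geq\; \frac{\sum_z \rho(z) - n\tau}{|C_{\underline P}|} \;\geq\; |C_{\underline P}| + (\epsilon - 4\delta)n - \frac{n\tau}{|C_{\underline P}|} \;\geq\; |C| + (\epsilon - 7\delta)n,$$
using $\tau \leq \delta|C_{\underline P}|$ (an easy consequence of $\Delta_{\lam+1} = \lam_{\MAX}^{\Delta_\lam}$ dominating $\Delta_\lam/\delta^2$, once $|C|$ is not degenerately small; any degenerate subcase is handled by a separate direct argument exploiting the $|X_0|$-labels in $A_y$).

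Finally, verify that $v$ $(\lam+1)$-reaches $R$. Given any $T$ with $|T| \leq k_{\lam+1}$, reapply Lemma~\ref{MengerLemma} with $S' = T \cup \underline P$ to obtain $Z_{T \cup \underline P}$. For each $(x, y) \in Z_{T \cup \underline P} \cap L^+(D)$ with $\phi(x,y) \notin S'$, Lemma~\ref{LemmaAmidstAddOneVertex} provides a switching path from $v$ to $\phi(x,y)$ of length at most $2d_\lam + 1 \leq d_{\lam+1}$ avoiding $T$. A vertex $z \in R$ is missed only if all $\rho(z) \geq \tau$ of its preimages in $Z_{\underline P}$ fail to lie in $Z_{T \cup \underline P}$; by part (iii) of Lemma~\ref{MengerLemma} only $6\Delta_\lam n$ preimages total can be lost, so at most $6\Delta_\lam n/\tau = \Delta_{\lam+1}$ of the $z$'s in $R$ are missed (plus a negligible correction for $z \in S'$). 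The main obstacle is the delicate balancing of $\tau$: small enough to keep $|R|$ above $|C| + (\epsilon - 7\delta)n$, yet large enough to absorb the $6\Delta_\lam n$ preimage losses within the $\Delta_{\lam+1}$-vertex budget. The tower-scale gap between $\Delta_\lam$ and $\Delta_{\lam+1}$ is engineered precisely to make this trade-off go through, and several slack estimates (for $|C_{\underline P}|$ vs.\ $|C|$, for path lengths, and for $|S'|$) must be tracked carefully to absorb the full $7\delta n$ error cleanly.
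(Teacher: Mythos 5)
Your proposal follows the same structure as the paper's argument: negate (i), invoke Lemma~\ref{MengerLemma} with $S=\underline P$, use concatenation with $P$ to transfer $(\lam,\underline P)$-amidstness at $v$ to plain amidstness at $u$, define $R$ by a high-multiplicity threshold on $\rho(z)$ (the paper's $|E_{\underline P}(z)|$), and prove the two bounds separately. Your key observations -- $\sum_z\rho(z)=|L^+(D)\cap Z_{\underline P}|=\sum_y|N^+_{A_y}(y)|$, the use of part~(iii) of Lemma~\ref{MengerLemma} to bound preimage loss, and Lemma~\ref{LemmaAmidstAddOneVertex} to extend to $z$ -- are exactly what the paper does. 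The only genuine deviation is the threshold: you take $\tau=6\Delta_\lam n/\Delta_{\lam+1}$, whereas the paper uses $\delta|C_{\underline P}|$. Both can work, but yours has two loose ends.

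First, your $\tau$ is chosen so that the preimage-loss bound gives \emph{exactly} $6\Delta_\lam n/\tau=\Delta_{\lam+1}$ missed vertices, leaving \emph{zero} slack for the vertices of $R$ lying in $S'\cup V(P)$, which your path construction also cannot reach. So as written you would miss up to $\Delta_{\lam+1}+|S'\cup V(P)|>\Delta_{\lam+1}$ vertices, which violates the definition of $(\lam+1)$-reaching. Calling this a ``negligible correction'' is not enough since your budget is already saturated; you need to shrink $\tau$ by, say, a factor of two (or follow the paper and take $\tau=\delta|C_{\underline P}|$, which gives roughly $18\Delta_\lam/(\delta\epsilon)+k_\lam\le\Delta_{\lam+1}$ with abundant slack).

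Second, the ``degenerate subcase'' where $|C|$ is small is not actually a separate case you can wave away: you rely both on $|C_{\underline P}|>0$ and on $\tau\le\delta|C_{\underline P}|$, and without a lower bound on $|C|$ neither is automatic. The paper resolves this \emph{inside} the same hypothesis: since every $x\in X_0\setminus\underline P$ is trivially amidst $u$ and $v$ (witnessed by $P$), the negation of (i) applied to $A=X_0\setminus\underline P$ forces $|N^+_{X_0}(v)|\ge\epsilon n-|\underline P|$, and Lemma~\ref{ComponentLarge} then gives $|C|\ge\epsilon n/2$ (see~(\ref{EqComponentLargeInComponentGrowth})). Your parenthetical about ``exploiting the $|X_0|$-labels in $A_y$'' gestures at the right idea, but the argument has to be made before the threshold is chosen, since the reaching claim (and in the paper's parameterization the size of $\tau$ itself) depends on it. With these two quantitative repairs your proof is sound.
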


\begin{proof}
Suppose that (i) doesn't hold. 
\begin{claim}\label{ClaimExpansionAmidstVandY}
Let $y\in C$ and $A$ be a set of labels which are $(\lam, {\underline P})$-amidst $v$ and $y$. Then  $|N^+_A(y)|\geq |A|-|X_0|+\epsilon  n$.
\end{claim}
\begin{proof}
It is sufficient to show that every label in $A$ is amidst $u$ and $y$. Then the claim follows since we are assuming that (i) doesn't hold. 

Fix $a\in A$.
Using the definition of $a$ being $(\lam, {\underline P})$-amidst $v$ and $y$, there is a  switching path $Q$ from $v$ to $y$ avoiding ${\underline P}$ and having no edges labelled by $a$. 
In addition if $a$ is a vertex of $D$, then $a\in Q\setminus\{v\}$. 
We also have $a\not\in \underline P$ since $a$ is $(\lam, \underline P)$-amidst $v$ and $y$.
Since $Q$ avoids ${\underline P}$,  Lemma~\ref{LemmaConcatenatePaths} implies that $P+Q$ is a switching path from $u$ to $y$.
Since neither $P$ nor $Q$ had edges labelled by $a$, $P+Q$ also has no edges labelled by $a$.
In addition, if $a$ is a vertex, then $P+Q$ passes through $a$ and $a\neq u$ (since $a\not\in \underline P$.)
Therefore $P+Q$ witnesses $a$ being amidst $u$ and $y$ as required.
\end{proof}

Notice that every $x\in X_0\setminus \underline{P}$ is $(\lam,\underline{P})$-amidst $v$ and $v$ (witnessed by the single-vertex path $v$.) By Claim~\ref{ClaimExpansionAmidstVandY} applied with $A= X_0\setminus \underline{P}$ we have $|N^+_{X_0\setminus \underline{P}}(y)|\geq \epsilon  n-|\underline P|$. By Lemma~\ref{ComponentLarge}, $n\geq N_0$, and $\underline{P}\leq 3d_{\lam_\MAX}$ we have 
\begin{equation}\label{EqComponentLargeInComponentGrowth}
|C|\geq |N^+_{X_0}(v)|-\delta n-\gamma_{\lam-3} n\geq \epsilon  n-|\underline P|-\delta n-\gamma_{\lam-3} n\geq \epsilon n/2.
\end{equation}

Apply Lemma~\ref{MengerLemma} to $C$ and $v$ to assign sets  $C_S\subseteq C$ and $Z_S\subseteq (X_0\cup C_S)\times C_S$   satisfying all the conclusions of Lemma~\ref{MengerLemma} to every set of labels $S$ with $|S|\leq k_\lam-3d_{\lam}$.
For $z\in V(D)$ and $S$ a set of labels, let 
$$E_S(z)=\{(x,y)\in L^+(D)\cap Z_{S}\text{ with }yz\text{ present and labelled by }x\}.$$ 
Since the labelling on $D$ is out-proper, we have that $E_{S}(z)\cap E_{S}(z')=\emptyset$ for any $S$ and $z\neq z'$.
Also notice that for sets $S$ and $S'$ we have $E_S(z)\cap L^+(D)\cap Z_{S'}\subseteq E_{S'}(z)$.
We now define the set $R$ 
$$R=\{z\in V(D): |E_{{\underline P}}(z)|\geq \delta |C_{\underline{P}}|\}.$$

First we show that $v$ $(\lam+1)$-reaches $R$.
\begin{claim}\label{ClaimRReached}
$v$ $(\lam+1)$-reaches $R$.
\end{claim}
\begin{proof}
Let $S$ be a set of at most $k_{\lam+1}$ labels. Notice that  $|S\cup {\underline P}|\leq k_{\lam+1}+3d_{\lam_\MAX}\leq k_\lam-3d_\lam$.
Let $R_S=\{z\in R: E_{S\cup {\underline P}}(z)\neq \emptyset\}.$

We claim that for every $z\in R_S\setminus (S\cup P)$, there is a length $\leq d_{\lam+1}$ switching path $P_{v,z}$ from $v$ to $z$ avoiding $S$.
Notice that for every $z\in  R_S\setminus (S\cup P)$, we have $E_{S\cup {\underline P}}(z)\neq \emptyset$, and so by the definition of $E_{S\cup {\underline P}}(z)$, there is a pair $(x,y)\in L^+(D)\cap Z_{S\cup {\underline P}}$ with $yz$ present and labelled by $x$. Since $(x,y)\in Z_{S\cup {\underline P}}$, by part (i) of Lemma~\ref{MengerLemma} we have $x$ $(\lam, S\cup {\underline P})$-amidst $v$ and $y$. By Lemma~\ref{LemmaAmidstAddOneVertex} applied with $S'=S\cup \underline P$, there is a switching path from $v$ to $z$ avoiding $S\cup {\underline P}$ of length $\leq 2d_\lam+1\leq d_{\lam+1}$.

To prove the claim it is sufficient  to show that $|R\setminus (R_S\setminus (S\cup P))|\leq \Delta_{\lam+1}$. We will do this by showing $|R\setminus R_S|+|S\cup P|\leq \Delta_{\lam+1}$.
We have
\begin{align*}
 \delta |C_{\underline P}||R\setminus R_S|
					&\leq \sum_{z\in R\setminus R_S}|E_{{\underline P}}(z)|\\
					&=\left|\bigcup_{z\in R\setminus R_S}E_{{\underline P}}(z)\right|\\
					&\leq |(L^+(D)\cap Z_{{\underline P}})\setminus (L^+(D)\cap Z_{S\cup {\underline P}})|\\
					&\leq 6\Delta_{\lam} n.
\end{align*}
The first inequality comes from the definition of $R$. The equality comes from $E_{{\underline P}}(z)\cap E_{{\underline P}}(z')=\emptyset$ for $z\neq z'$. 
For the second inequality  first recall that $E_{\underline P}\subseteq L^+(D)\cap Z_{\underline{P}}$. Then notice that for $z\in R\setminus R_S$ we have $E_{S\cup \underline P}(z)=\emptyset$ and hence $E_{\underline{P}}(z)\cap L^+(D)\cap Z_{S\cup \underline P}\subseteq E_{S\cup \underline P}(z)=\emptyset$. This shows  $E_{{\underline P}}(z) \subseteq (L^+(D)\cap Z_{{\underline P}})\setminus (L^+(D)\cap Z_{S\cup {\underline P}})$ for $z \in R\setminus R_S$ which implies the second inequality.
The third inequality comes from part (iii) of Lemma~\ref{MengerLemma} and $|S\cup {\underline P}|\leq k_\lam-3d_\lam$.

Rearranging and using $|S\cup P|\leq k_{\lam}$ we obtain $|R\setminus R_S|+|S\cup P|\leq 6\Delta_{\lam}n/\delta |C_{\underline{P}}|+ k_{\lam}$. From (\ref{EqComponentLargeInComponentGrowth}) and Lemma~\ref{MengerLemma} (ii) we have $|C_P|\geq |C|-\delta n\geq \epsilon n/3$.
Combining these  gives $|R\setminus R_S|+|S\cup P|\leq 18\Delta_{\lam}/\delta \epsilon+ k_{\lam}\leq \Delta_{\lam + 1}$ as required.
\end{proof}

The following claim  lets us lower bound $|L^+(D)\cap Z_{\underline{P}}|$ in terms of $|Z_{\underline{P}}|$.
\begin{claim}\label{EQBoundLZP}
$|L^+(D)\cap Z_{\underline P}|\geq |Z_{\underline P}|-|X_0||C_{\underline P}|+\epsilon n|C_{\underline P}|.$
\end{claim}
\begin{proof}
For every $y\in C_{\underline P}$, define
\begin{align*}
A_{\underline P}(y)&=\{x\in X_0\cup C_{\underline P}: (x,y)\in Z_{\underline P}\},\\
A_{\underline P}^L(y)&=\{x\in X_0\cup C_{\underline P}: (x,y)\in L^+(D)\cap Z_{\underline P}\}.
\end{align*}
From the definition of $L^+(D)$ we have $A_{\underline P}^L(y)=\{x\in A_{\underline P}(y): |N^+_{\{x\}}(y)|=1\}$.
Notice that we have $Z_{\underline P}=\bigcup_{y\in C_{\underline{P}}} A_{\underline{P}}(y)\times\{y\}$ and $L^+(D)\cap Z_{\underline P}=\bigcup_{y\in C_{\underline{P}}} A^L_{\underline{P}}(y)\times\{y\}$.
The following string of equalities holds.
\begin{equation}\label{EqBoundLZP1}
|L^+(D)\cap Z_{\underline P}|=\sum_{y\in C_{\underline P}}|A_{\underline P}^L(y)|= \sum_{y\in C_{\underline P}}\big|N_{A_{\underline P}^L(y)}^+(y)\big|= \sum_{y\in C_{\underline P}}\big|N_{A_{\underline P}(y)}^+(y)\big|.
\end{equation}
The first equality comes from $L^+(D)\cap Z_P=\bigcup_{y\in C_{\underline{P}}} A^L_{\underline{P}}(y)\times\{y\}$.
The second equality holds since $\big|N^+_{\{x\}}(y)\big|=1$ for $x\in A_{\underline P}^L(y)$ which implies $\big|N^+_{A_{\underline P}^L(y)}(y)\big|=|A_{\underline P}^L(y)|$. 
The third equality holds since $\big|N_{A_{\underline P}(y)}^+(y)\big|=\big|N_{A_{\underline P}^L(y)}^+(y)\big|+\big|N_{A_{\underline P}\setminus A_{\underline P}^L(y)}^+(y)\big|$ and also $\big|N_{A_{\underline P}\setminus A_{\underline P}^L(y)}^+(y)\big|=0$ which comes from ``$A_{\underline P}^L(y)=\{x\in A_{\underline P}(y): |N^+_{\{x\}}(y)|=1\}$''.

By Lemma~\ref{MengerLemma} (i), for every $x\in A_{\underline P}(y)$ we have $x$ $(\lam, \underline{P})$-amidst $v$ and $y$.
We can use Claim~\ref{ClaimExpansionAmidstVandY} to bound $\sum_{y\in C_{\underline P}}\left|N_{A_{\underline P}(y)}^+(y)\right|$.
\begin{equation}\label{EqBoundLZP2}
\sum_{y\in C_{\underline P}}\left|N_{A_{\underline P}(y)}^+(y)\right|
				\geq \sum_{y\in C_{\underline P}}(|A_{\underline P}(y)|-|X_0|+\epsilon n)
				= |Z_{\underline P}|-|X_0||C_{\underline P}|+\epsilon n|C_{\underline P}|
\end{equation}
 The first inequality comes from Claim~\ref{ClaimExpansionAmidstVandY}. The equality comes from $Z_P=\bigcup_{y\in C_{\underline{P}}} A_{\underline{P}}(y)\times\{y\}$. The claim  follows from~(\ref{EqBoundLZP1}) and~(\ref{EqBoundLZP2}).
\end{proof}

Now we show that $R$ is large.
\begin{claim}\label{ClaimRLarge}
$|R|\geq |C|+(\epsilon -7\delta)n.$
\end{claim}
\begin{proof}
Since $D$ doesn't have repeated edges, for any $z\in V(D)$ and $y\in C_{\underline P}$, there can be at most one label $x$  for which $(x,y)\in E_{{\underline P}}(z)$. In particular this implies that for all $z\in V(D)$ we have $|E_{{\underline P}}(z)|\leq |C_{\underline P}|$.
For $z\not\in R$ we have $|E_{{\underline P}}(z)|\leq  \delta |C_{\underline P}|$. 
These imply
\begin{equation}\label{EqABoundClaim1}
(n-|R|)\delta |C_{\underline{P}}|+|R||C_{\underline P}|\geq \sum_{z\in V(D)}|E_{{\underline P}}(z)|=|L^+(D)\cap Z_{\underline P}|.
\end{equation}
To see the equality, notice that by the definition of $E_{\underline P}(z)$ both sides equal $\left|\bigcup_{z\in V(D)}E_{\underline P}(z)\right|$.
Combining Claim~\ref{EQBoundLZP} with part (ii) of Lemma~\ref{MengerLemma} we get 
\begin{equation}\label{EqABoundClaim2}
|L^+(D)\cap Z_{\underline P}|\geq |Z_{\underline P}|-|X_0||C_{\underline P}|+\epsilon n|C_{\underline P}|\geq |C_{\underline P}|^2+\epsilon n|C_{\underline P}|-4\delta n|C_{\underline P}|.
\end{equation}
Combining (\ref{EqABoundClaim1}) and (\ref{EqABoundClaim2}) and rearranging implies the claim. 
\begin{align*}
|R|\geq\frac{|C_{\underline P}|^2+\epsilon n|C_{\underline P}|-5\delta n|C_{\underline P}|}{|C_{\underline P}|-\delta |C_{\underline{P}}|}
\geq |C_{\underline P}|+\epsilon  n-5\delta n.
\geq |C|+\epsilon  n-7\delta n.
\end{align*}
The last inequality is from Lemma~\ref{MengerLemma} (ii).
\end{proof}
Claims~\ref{ClaimRReached} and~\ref{ClaimRLarge} prove the lemma.
\end{proof}

\subsection{Proofs of Theorems~\ref{MainTheorem} and~\ref{TheoremDigraph}}
In this section we prove the theorems of this paper. 
\begin{proof}[Proof of Theorem~\ref{TheoremDigraph}]
Suppose that there is a vertex $u$ for which the theorem doesn't hold i.e. that for every vertex $v$ and set of labels $A$ amidst $u$ and $v$  we have $|N^+_A(v)|\geq |A|-|X_0|+\epsilon  n$.
Apply Lemma~\ref{ComponentTower} to $u$  in order to obtain numbers $\lam_1, \dots, \lam_{4\delta^{-1}}$, components $C_1, \dots, C_{4\delta^{-1}}$,   and a path $P$ passing through all of them. For each $i$, let $v_i$ be a vertex in $P\cap C_i$.

Using Lemma~\ref{ComponentLarge} we can show that all the components $C_i$ are large.
\begin{claim}\label{ClaimComponentBiggerThanX0}
$|C_i|\geq \epsilon n/2$ for $i=1, \dots, 4\delta^{-1}$.
\end{claim}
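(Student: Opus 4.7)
The plan is to use the switching path $P$ from Lemma~\ref{ComponentTower} to turn most labels in $X_0$ into labels amidst $u$ and $v_i$, feed this into the contradiction hypothesis to lower-bound $|N^+_{X_0}(v_i)|$, and then transfer this lower bound to $|C_i|$ via Lemma~\ref{ComponentLarge}.

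First I would let $P_i$ denote the initial subpath of $P$ from $u$ to $v_i$. Since any prefix of a switching path is itself a switching path, $P_i$ is a switching path from $u$ to $v_i$ of length at most $d_{\lam_\MAX/2}$. For any $x \in X_0 \setminus \underline{P_i}$, the path $P_i$ contains no edge labelled by $x$, and $x$ is not a vertex of $D$ (it lies in $X_0$), so $P_i$ witnesses $x$ being amidst $u$ and $v_i$. Setting $A_i = X_0 \setminus \underline{P_i}$ and applying the standing contradiction hypothesis at $u$ to the pair $(v_i, A_i)$ then yields
\[ |N^+_{X_0}(v_i)| \;\geq\; |N^+_{A_i}(v_i)| \;\geq\; |A_i| - |X_0| + \epsilon n \;\geq\; \epsilon n - |\underline{P_i}| \;\geq\; \epsilon n - 3d_{\lam_\MAX/2}. \]

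Next I would apply Lemma~\ref{ComponentLarge} to the $\lam_i$-component $C_i$ at its vertex $v_i$ to obtain $|C_i| \geq |N^+_{X_0}(v_i)| - \delta n - \gamma_{\lam_i - 3}n$. Combining with the previous inequality gives
\[ |C_i| \;\geq\; \epsilon n - 3d_{\lam_\MAX/2} - \delta n - \gamma_{\lam_i-3}n, \]
and since $n \geq N_0 = \twr(2\lam_\MAX)$ dwarfs $d_{\lam_\MAX/2}$, while $\delta = \epsilon^3 \ll \epsilon$ and $\gamma_{\lam_i - 3} \leq \lam_\MAX^{-1}$ are both negligible, the right-hand side will comfortably exceed $\epsilon n/2$.

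The only delicate step is checking that each $\lam_i$ lies in the range $6 \leq \lam_i \leq \lam_\MAX$ demanded by Lemma~\ref{ComponentLarge}. Unwinding the construction inside the proof of Lemma~\ref{ComponentTower} (where $\lam_{m+1}$ is set to $\lam_\MAX/4$ and then drops by at most $9 + 87\delta^{-2}$ per step over $4\delta^{-1}$ steps) gives $\lam_1 \geq \lam_\MAX/4 - 4\delta^{-1}(9 + 87\delta^{-2})$, which is enormously bigger than $6$ by the choice $\lam_\MAX = 4^{\epsilon^{-9}}$. This constant-chasing is the main---and only very mild---obstacle.
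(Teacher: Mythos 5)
Your proof is correct and follows essentially the same route as the paper: take the initial subpath of $P$ ending at $v_i$, use it to witness that most labels in $X_0$ are amidst $u$ and $v_i$, feed the set $X_0\setminus\underline{P_i}$ into the contradiction hypothesis to lower bound $|N^+_{X_0}(v_i)|$, and transfer that bound to $|C_i|$ via Lemma~\ref{ComponentLarge}. The one thing you do more carefully than the paper is explicitly verify the hypothesis $6\leq\lam_i\leq\lam_\MAX$ of Lemma~\ref{ComponentLarge} (the paper only notes $\lam_i\leq\lam_\MAX$), which is a welcome bit of rigor but not a different approach.
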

\begin{proof}
Fix $i\leq 4\delta^{-1}$. Let $P'$ be the subpath of $P$ ending with $v_i$ and $A=X_0\setminus \underline{P'}$. Notice that $P'$ witnesses every label in $A$ being amidst $u$  and $v_i$. Since we are assuming that the theorem doesn't hold we obtain 
$|N^+_A(v_i)|\geq |A|-|X_0|+\epsilon  n\geq \epsilon n-|\underline {P'}|\geq \epsilon  n -3d_{\lam_\MAX}$ (using the fact that $P$ has length $\leq d_{\lam_\MAX/2}$ in Lemma~\ref{ComponentTower}.)
Since $N^+_{A}(v_i)\subseteq N^+_{X_0}(v_i)$, $n\geq N_0$, and $\lam_i\leq \lam_\MAX$, Lemma~\ref{ComponentLarge}  implies $|C_i|\geq |N^+_{A}(v_i)|-\delta n-\gamma_{\lam_i-3}n\geq \epsilon  n -3d_{\lam_\MAX} -\delta n-\gamma_{\lam_i-3}n\geq \epsilon n/2$.
\end{proof}

Using Lemma~\ref{ComponentGrowth}, we can show that each $C_i$ is much bigger than the previous one.
\begin{claim}\label{ClaimComponentGrowth}
$|C_{i+1}|\geq |C_i|+\epsilon  n/2$ for $i=1, \dots, 4\delta^{-1}$
\end{claim}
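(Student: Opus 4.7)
The plan is to apply Lemma~\ref{ComponentGrowth} to $C_i$ at a vertex $v$ chosen to lie in both $C_i$ and $C_{i+1}$, and then use the bypassing property from the $\lam_{i+1}$-component structure at $v$ to force the large set guaranteed by Lemma~\ref{ComponentGrowth} to live almost entirely inside $C_{i+1}$.

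First I will produce such a vertex $v$. Let $P_i$ denote the subpath of the path from Lemma~\ref{ComponentTower} going from $u$ to $v_i$, so $|P_i| \leq d_{\lam_\MAX/2}$ and $|\underline{P_i}| \leq 3 d_{\lam_\MAX/2} \leq k_{\lam_i}$. Since $C_i$ is a $\lam_i$-component, the definition supplies a set $R_{v_i}$ with $|R_{v_i} \symdiff C_i| \leq \delta n$ and $v_i$ $\lam_i$-reaching $R_{v_i}$; forbidding the labels $\underline{P_i}$ in the definition of reaching yields length $\leq d_{\lam_i}$ switching paths from $v_i$, avoiding $\underline{P_i}$, to all but at most $\Delta_{\lam_i}$ vertices of $R_{v_i}$. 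Combining $|R_{v_i} \symdiff C_i| \leq \delta n$ and $|C_i \setminus C_{i+1}| \leq \delta n$ with Claim~\ref{ClaimComponentBiggerThanX0} shows that the intersection $R_{v_i} \cap C_i \cap C_{i+1}$ with the reached endpoints has size at least $|C_i| - 3\delta n - \Delta_{\lam_i} > 0$. Picking any such $v$ and concatenating $P_i$ with the corresponding switching path $Q$ from $v_i$ to $v$ via Lemma~\ref{LemmaConcatenatePaths} produces a switching path $P^\ast$ from $u$ to $v$ of length at most $d_{\lam_\MAX/2} + d_{\lam_i} \leq d_{\lam_\MAX}$.

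Next I apply Lemma~\ref{ComponentGrowth} with $C = C_i$, vertex $v$, and path $P^\ast$. The theorem's contradiction hypothesis rules out alternative (i), so alternative (ii) supplies a set $R$ with $v$ $(\lam_i + 1)$-reaching $R$ and $|R| \geq |C_i| + (\epsilon - 7\delta)n$. Since $v \in C_{i+1}$ and $C_{i+1}$ is a $\lam_{i+1}$-component, there exists $R'$ with $|R' \symdiff C_{i+1}| \leq \delta n$ such that $v$ $(\lam_{i+1} - 3)$-bypasses $\overline{R'}$. Because $\lam_{i+1} \geq \lam_i + 9$ forces $\lam_i + 1 \leq \lam_{i+1} - 3$, monotonicity of reaching (Observation~\ref{ObservationHereditary}) upgrades the reaching of $R$ to level $\lam_{i+1} - 3$, and the same monotonicity applies to the subset $R \cap \overline{R'}$. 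The bypassing property therefore forces $|R \cap \overline{R'}| < \gamma_{\lam_{i+1} - 3} n$.

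Combining gives $|R \cap R'| > |R| - \gamma_{\lam_{i+1}-3} n$, and from $|R' \symdiff C_{i+1}| \leq \delta n$ we deduce
\[
|C_{i+1}| \geq |R \cap C_{i+1}| > |R| - \gamma_{\lam_{i+1}-3}n - \delta n \geq |C_i| + (\epsilon - 7\delta)n - \gamma_{\lam_{i+1}-3}n - \delta n.
\]
Since $\delta = \epsilon^3$ and $\gamma_{\lam_{i+1}-3} = \twr(\lam_{i+1}-1)^{-1}$ is negligible compared to $\delta$, the conclusion $|C_{i+1}| \geq |C_i| + \epsilon n/2$ follows. The main obstacle is the parameter calibration: the success of the argument hinges on the gap $\lam_{i+1} - \lam_i \geq 9$ provided by Lemma~\ref{ComponentTower}, which ensures that the reach level $\lam_i + 1$ from Lemma~\ref{ComponentGrowth} falls strictly below the bypass level $\lam_{i+1} - 3$ of the next component, allowing the bypass to contain the reached set inside (an approximation of) $C_{i+1}$.
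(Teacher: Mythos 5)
Your proposal is correct and follows essentially the same route as the paper's proof: pick $v\in R_{v_i}\cap C_i\cap C_{i+1}$ reachable from $v_i$ by a short switching path avoiding $\underline{P}$, concatenate to get a path $P^*$ from $u$ to $v$, apply Lemma~\ref{ComponentGrowth} to obtain a large $(\lam_i+1)$-reached set $R$, and then use the $(\lam_{i+1}-3)$-bypassing at $v$ (valid because $\lam_i+1\leq\lam_{i+1}-3$) to force $R$ almost entirely into $C_{i+1}$. The only deviations are minor numerical slack in the intermediate bounds, which does not affect the argument.
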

\begin{proof}
Recall that from Lemma~\ref{ComponentTower} we have $|C_i\setminus C_{i+1}|\leq \delta n$ for all $i$. Combining this with Claim~\ref{ClaimComponentBiggerThanX0} we get $|C_i\cap C_{i+1}|=|C_i|-|C_i\setminus C_{i+1}|\geq \epsilon n/2-\delta n\geq \epsilon  n/3$. For each $i$, let $R_{v_i}$ be a set from the definition $\lam_i$-component which is  $\lam_i$-reached by $v_i$. 
Using $|C_i\symdiff R_{v_i}|\leq \delta n$ and $|C_i\cap C_{i+1}|\geq  \epsilon  n/3$ we get $|R_{v_i}\cap C_{i}\cap C_{i+1}|\geq |C_{i}\cap C_{i+1}|- |C_i\setminus R_{v_i}|\geq  \epsilon  n/3-\delta n\geq \Delta_{\lam_i}+1$. From the monotonicity of reaching we have that $v_i$ $\lam_i$-reaches $R_{v_i}\cap C_{i}\cap C_{i+1}$.  Using $|\underline P|\leq 3d_{\lam_\MAX/2}\leq k_{\lam_i}$ and $|R_{v_i}\cap C_{i}\cap C_{i+1}|\geq \Delta_{\lam_i}+1$ this implies that there is a length $\leq d_{\lam_i}$ switching path $Q$  avoiding $\underline P$ from $v_i$ to a vertex $v\in R_{v_i}\cap C_{i}\cap C_{i+1}$. 
Let $P'$ be the subpath of $P$ from $u$ to $v_i$. By Lemma~\ref{LemmaConcatenatePaths}, $P'+Q$ is a length $\leq d_{\lam_\MAX}$ switching path from $u$ to $v$.

Notice that all the conditions of Lemma~\ref{ComponentGrowth} hold with $C=C_i$, $\lam=\lam_i$, $u=u$, $v=v$, and $P=P'+Q$. In addition, (i) cannot hold since we are assuming that $u$  is a vertex for which the theorem is false.
Therefore part (ii) of Lemma~\ref{ComponentGrowth} occurs, i.e. we obtain a set $R$ which is $(\lam_i+1)$-reached by $v$ and has $|R|\geq |C_i|+(\epsilon -7\delta)n$. Since $v$ is in the $(\lam_{i+1})$-component $C_{i+1}$, we have a set $R_{v}$ with $|R_v\symdiff C_{i+1}|\leq \delta n$ such that $\overline{R_v}$ is $(\lam_{i+1}-3)$-bypassed by $v$. 
From Lemma~\ref{ComponentTower} we have $\lam_i+1\leq \lam_{i+1}-3$.
Since $v$ $(\lam_i+1)$-reaches $R$, $(\lam_{i+1}-3)$-bypasses $\overline{R_v}$, and $\lam_i+1\leq \lam_{i+1}-3$ we get $|R\setminus {R_v}|=|R\cap \overline{R_v}|\leq \gamma_{(\lam_{i+1}-3)}n$ (using the monotonicity of reaching.) Combining this with $|R_v\symdiff C_{i+1}|\leq \delta n$, $|R|\geq |C_i|+(\epsilon -7\delta)n$, and $n\geq N_0$ we get 
\begin{align*}
|C_{i+1}|&\geq |R_v|-|R_v\setminus C_{i+1}|
\geq |R|-|R\setminus R_v|-|R_v\symdiff C_{i+1}|\\
&\geq |C_i|+ (\epsilon  -7\delta)n -\gamma_{(\lam_{i+1}-3)}n-\delta n
\geq |C_i|+\epsilon  n/2.
\end{align*}
\end{proof}

Iterating $|C_{i+1}|\geq |C_i|+\epsilon  n/2$ for $i=1, \dots, 4\delta^{-1}$ gives $|C_{4\delta^{-1}}|\geq 2\delta^{-1}\epsilon n>n$ which is a contradiction to $C_{4\delta^{-1}}\subseteq V(D)$.
\end{proof}

Using Theorem~\ref{TheoremDigraph} and Lemma~\ref{LemmaMaximumMatchingExpansion} it is easy to prove our approximate version of Conjecture~\ref{AharoniBergerConjecture}.
\begin{proof}[Proof of Theorem~\ref{MainTheorem}]
Let $N_0=N_0(0.9\epsilon)$ be the constant from Theorem~\ref{TheoremDigraph}, and let $G$ be a properly coloured bipartite multigraph with $n\geq N_0$ colours having $\geq (1+\epsilon)n$ edges of each colour.
Let $M$ be a maximum rainbow matching in $G$. 
Let $X$ and $Y$ be the parts of the bipartition of $G$ and $X_0=X\setminus V(M)$.
Suppose for the sake of contradiction that $M$ misses a colour $c^*$. Let $\DGM$ be the labelled directed graph from Definition~\ref{DefinitionDGM} corresponding to $M$. By Lemma~\ref{LemmaProperlyColoured}, $\DGM$ is out-properly labelled and simple.
By Lemma~\ref{LemmaMaximumMatchingExpansion} we have that for any vertex $v\in V(\DGM)$ and any set of labels $A$ amidst $c^*$ and $v$ we have $|N^+_A(v)|\geq |A|-|X_0|+\epsilon  n-1\geq |A|-|X_0|+0.9\epsilon n$. This contradicts Theorem~\ref{TheoremDigraph} applied with $\epsilon'=0.9\epsilon$.
\end{proof}

\section{Concluding remarks}
Here we make some remarks about the proof in this paper and directions for further research.

\subsubsection*{Improving the bound in Theorem~\ref{MainTheorem}}
In this paper we proved an approximate version of the Aharoni-Berger Conjecture. Naturally, the main direction for further research is to improve the dependency of $N_0$ on $\epsilon$ in Theorem~\ref{MainTheorem}, and eventually prove the full conjecture. The dependency which follows from our proof is extremely bad---for $\epsilon>0$, we have $N_0=\twr\left(2\cdot 4^{\epsilon^{-9}}\right)$. This dependency can surely be significantly improved by tweaking the proof in various ways. The author believes that getting a polynomial error term is out of reach of the methods in this paper.
\begin{problem}\label{Problem_Polynomial_Bounds}
For some $\alpha<1$ prove the following.
 Let $G$  be a properly edge-coloured bipartite multigraph with $n$ colours and at least $n+n^{\alpha}$ edges of each colour. Then $G$ has a  rainbow matching using every colour.
\end{problem}
Of particular interest would be to solve the above problem for some $\alpha<1/2$. This is because there are some natural variants of the Aharoni-Berger Conjecture, where $n^{1/2}$ is the best currently known bound on the error term. One of these is the version of the Aharoni-Berger Conjecture where not every colour needs to be used in the rainbow matching. Specifically it is know that in every properly edge-coloured bipartite multigraph with $n$ colours and at least $n$ edges of each colour, there is a rainbow matching of size $n-\sqrt n$ (see \cite{Barat, Woolbright}.) 
Also recall that Haggkvist and Johansson proved an approximate version of the Aharoni-Berger Conjecture when when the colour classes in $G$ are all disjoint perfect matchings. In their paper \cite{HaggkvistJohansson} they say that ``it will be clear from the proof that in  order  to  reach $\epsilon\leq n^{-1/2}$ some  new  ideas  must  be  found,  if  indeed  the theorem is valid in this range.'' This suggests that $\sqrt n$ is a natural barrier for their techniques as well. 

Finally it would be extremely interesting to show that every properly $n$-edge-coloured bipartite multigraph with $n+o(\log^2 n)$ edges of each colour has a rainbow matching using every colour. This would improve the best known bound on the Brualdi-Stein Conjecture \cite{HatamiSchor}.

After this paper was announced, Gao, Ramadurai, Wanless, and Wormald~\cite{gao2017} solved Problem~\ref{Problem_Polynomial_Bounds} when an additional restriction is placed on the multigraph --- that the multiplicity of each edge is at most $\sqrt{n}/\log^2 n$. Together with the Haggkvist-Johanson approach~\cite{HaggkvistJohansson}, and the author's first approach~\cite{PokrovskiyRainbow1}, this gives three different approaches which can prove an asymptotic version of the Aharoni-Berger Conjecture when the underlying graph is simple. However, when the underlying graph is a general multigraph, then the approach in this paper remains the only known way to prove the Aharoni-Berger Conjecture asymptotically.

\subsubsection*{Improving the bound in Theorem~\ref{TheoremDigraph}}
In this paper and~\cite{PokrovskiyRainbow1} we introduced a directed graph based approach to the Aharoni-Berger Conjecture. It would be interesting to know how far this approach can be pushed. A specific open problem is to find out how small $N_0$ in Theorem~\ref{TheoremDigraph} can be. 
Perhaps with some completely different proof technique, Theorem~\ref{TheoremDigraph} can be proved with much better bounds?
Of particular interest is to find out whether there are serious barriers to this approach proving the full Aharoni-Berger Conjecture. For example---for every $C\in \mathbb{N}$, are there labelled directed graphs such that for all $u, v\in V(D)$ and  set of labels $A$ amidst $u$ and $v$ we have $|N^+_A(v)|\geq |A|-|X_0|+C$?''

\subsubsection*{Non-bipartite graphs}
The problem dealt with in this paper can be asked for non-bipartite graphs as well i.e. what is the smallest $f(n)$ so that given $n$ matchings of size $f(n)$  in a (not necessarily bipartite) graph $G$, there is a rainbow matching using every colour. The following conjecture about this appears in \cite{gao2017}.
\begin{conjecture}[\cite{gao2017}]\label{Conjecture_Nonbipartite}
 Let $G$  be a properly edge-coloured bipartite multigraph with $n$ colours having at least $n+2$ edges of each colour. Then $G$ has a  rainbow matching using every colour. 
\end{conjecture}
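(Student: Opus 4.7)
The plan is to adapt the directed graph framework of this paper to the non-bipartite setting, first aiming for the approximate version: for every $\epsilon>0$ and $n$ sufficiently large, any properly $n$-edge-coloured (not necessarily bipartite) multigraph with at least $(1+\epsilon)n$ edges of each colour has a rainbow matching using every colour. The hope would then be to refine this to the sharp bound $n+2$ by carefully handling the remaining small error, but the tight version is almost certainly out of reach of these methods alone (just as Theorem~\ref{MainTheorem} does not give the conjectured $n+1$).

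First I would pick a maximum rainbow matching $M$ missing some colour $c^*$ and try to construct an auxiliary labelled directed graph playing the role of $\DGM$. The bipartition of $G$ was used solely to orient each matching edge into a ``source'' end (in $X$) and a ``target'' end (in $Y$). In the non-bipartite case I would \emph{double} each vertex of $\DGM$: for each $c\in C_M$ introduce two copies $c^+,c^-$ corresponding to the two possible orientations of $m_c$. For every non-matching edge $e$ of colour $u$ incident to an endpoint $y$ of $m_v$, declare $y$ to be the target end in the corresponding orientation of $m_v$, and add a directed edge from $u^{\sigma}$ to $v^{\tau}$ labelled by the other endpoint $x$ of $e$ (or by the colour of $m_c$ if $x\in m_c$, using the orientation that puts $x$ at the source end of $m_c$). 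Set $V_0=V(G)\setminus V(M)$ to play the role of $X_0$. The key feature of this doubling is that every non-matching edge $e$ touching $V(M)$ contributes at least one directed edge; edges with both endpoints in $V(M)$ contribute two edges (one for each orientation), which should roughly compensate for the doubled vertex count.

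Next I would reprove the analogues of Lemmas~\ref{LemmaProperlyColoured} and~\ref{LemmaMaximumMatchingExpansion} in this setting. The switching path exchange identity (Claim~\ref{ClaimSwitchingPathExchangeEdges}) should go through almost verbatim provided that whenever the switching path uses a vertex $c^+$ it never also uses $c^-$: this forbids the exchange from trying to re-orient the same matching edge two different ways, which in the non-bipartite case would produce a genuine conflict (the analogue of an augmenting ``blossom''). Concretely I would work with \emph{compatible} switching paths, i.e.\ those whose vertex sequence projects to distinct colours in $C_M$. With this side condition, the rainbow-matching augmentation argument from the bipartite case goes through, and one obtains the analogue $|N^+_A(v)|\geq |A|-|V_0|+\epsilon n-O(1)$ for every $v$ and every $A$ amidst $c^*$ and $v$ within the compatibility restriction. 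Feeding this into Theorem~\ref{TheoremDigraph} (possibly with a slightly weakened constant to absorb the doubling) yields the desired contradiction.

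The main obstacle, and the reason this is only a proposal, is the compatibility restriction: the proof of Theorem~\ref{TheoremDigraph} makes heavy use of arbitrary switching paths and $\lam$-components, and it is not at all clear that the structural lemmas (especially Lemmas~\ref{ComponentExistence},~\ref{ComponentTower} and~\ref{MengerLemma}) survive when one insists that for each colour at most one of $c^+,c^-$ appears in any path or component. A plausible fix is to work with the quotient digraph in which $c^+$ and $c^-$ are identified but edges carry an extra $\mathbb{Z}/2$-label encoding the orientation, and to extend the notions of reaching, bypassing, and amidstness to respect this extra labelling. Verifying that the combinatorial edifice of Section~\ref{SectionDirectedGraphs} survives this extension is where the real work lies, and it is the step I would expect to consume the bulk of the proof. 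Closing the final gap from $(1+\epsilon)n$ to the conjectured $n+2$ would then require a separate, genuinely new idea capable of exploiting the exact matching structure rather than just asymptotic densities.
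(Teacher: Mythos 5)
There is nothing in the paper to compare against: the statement you were asked about is an open conjecture, attributed to Gao, Ramadurai, Wanless, and Wormald, which the paper records in its concluding remarks but does not prove. In fact the paper explicitly says there that the construction of $\DGM$ ``relied heavily on the graph $G$ being bipartite'' and that ``it is not clear what natural auxiliary directed graph could be useful in the non-bipartite case,'' so the author is openly leaving this problem open. (Note also that the conjecture statement as printed contains a typo --- the word ``bipartite'' should be deleted, as the surrounding discussion and the two-copies-of-$K_4$ example make clear; you interpreted it correctly.)

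As for your attempt, you are candid that it is not a proof, and the gap you flag is the real one. The vertex-doubling construction is a sensible concretization of what a non-bipartite analogue of $\DGM$ might look like, but the machinery of Section~\ref{SectionDirectedGraphs} --- especially Lemmas~\ref{ComponentExistence}, \ref{ComponentTower}, and~\ref{MengerLemma} --- relies on freely concatenating and rerouting switching paths inside a $\lam$-component, and once you impose the compatibility restriction (never use both $c^+$ and $c^-$) none of those arguments obviously survive: every union, transitivity, and avoidance step would have to be checked to respect the restriction, and the counting in Lemma~\ref{MengerLemma} and Lemma~\ref{ComponentGrowth} would lose factors you cannot afford. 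Your proposed $\mathbb{Z}/2$-labelled quotient is an idea, not an argument. Moreover, you correctly observe that even a successful generalization would only give $(1+\epsilon)n$, which is qualitatively short of the conjectured $n+2$; the paper's own bipartite result falls short of $n+1$ in the same way, so this exact-to-approximate gap is orthogonal to the non-bipartite difficulty and would require genuinely different tools. In short, this is a reasonable research proposal that correctly locates the obstruction, but it does not prove the conjecture, and neither does the paper.
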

The motivation for asking for $n+2$ edges of each colour (rather than $n+1$ like in Conjecture~\ref{AharoniBergerConjecture}) is that in the non-bipartite case there is an example of graph with $n+1$ edges of each colour and no rainbow matching using each colour. This example is to take two vertex-disjoint copies of $K_4$ and properly edge-colour it using $3$ colours. This graph has $4$ edges of each colour, but it is easy to check that it has no rainbow matching of size $3$.

Some progress has been made on Conjecture~\ref{Conjecture_Nonbipartite} since this paper has been announced. 
Keevash and Yepremyan~\cite{keevash2017} showed that if the multiplicities are $\leq o(n)$ and each colour appears at least $(1+o(1))n$ times, then there is a rainbow matching using $n-O(1)$ colours. 
Gao, Ramadurai, Wanless, and Wormald~\cite{gao2017} proved that if the multiplicities are $\leq \sqrt{n}/\log^2 n$ and each colour appears at least $(1+o(1))n$ times, then there is a rainbow matching using every colour. 
When restricted to bipartite graphs, both of these results are qualitatively weaker than the one in this paper (since Theorem~\ref{MainTheorem}  places no restriction on the multiplicities). 

It is natural to ask whether the methods in this paper can prove an approximate version Conjecture~\ref{Conjecture_Nonbipartite} without any restriction on the multiplicities. While the  author isn't aware of any inherent barriers preventing a suitable generalization from existing, it certainly is no easy task to find one. Indeed 
the construction of the auxiliary directed graph $\DGM$ relied heavily on the graph $G$ being bipartite, and it is not clear what natural auxiliary directed graph could be useful in the non-bipartite case.

\subsubsection*{Different approaches based on directed graphs}
Theorem~\ref{MainTheorem} was proved by considering a directed graph $\DGM$ corresponding to $G$ and studying paths called ``switching paths'' in $\DGM$. Neither the definition of $\DGM$ nor the notion of ``switching path'' which we used are canonical. There are variations of these definitions which could be used to prove the same theorems.
For example, instead of the directed graph $\DGM$ perhaps one could consider  an alternative definition where edges going through $Y_0$ in $G$ somehow corresponded to edges in $\DGM$. 
Instead of switching paths, one can use the following. 
\begin{definition}[Weakly switching path]
A path $P=(p_0, \dots, p_d)$ in an edge-labelled, directed graph $D$ is a weakly switching path if the following hold.
\begin{itemize}
\item $P$ is rainbow  i.e. the edges of $P$ have different labels.
\item If $p_{i}p_{i+1}$ is labelled by a vertex $v\in V(D)$, then $v=p_j$ for some $1\leq j\leq d$.
\end{itemize}
\end{definition}
The difference between ``weakly switching path'' and ``switching path'' is that for weakly switching paths if $p_{i}p_{i+1}$ is labelled by $v\in V(D)$ then we only ask for $v$ to be a non-starting vertex of $P$ (whereas for ``switching path'', we wanted $v$ to precede $p_ip_{i+1}$ as well.)
It is not hard to check that everything in Section~\ref{SectionDirectedGraphs} stays true if we replace ``switching path'' by ``weakly switching path''. Also a ``weakly switching path'' version of Theorem~\ref{TheoremDigraph} follows from the version of Theorem~\ref{TheoremDigraph} which we prove  (just because switching paths are a special case of weakly switching paths.) 

The above discussion suggests  that weakly switching paths are perhaps a better notion to use in future research. 
The reason we didn't use them in this paper is a little bit technical. If we changed ``switching path'' to ``weakly switching path'' in the definition of ``$v$ $\lam$-reaching $R$'', then we would allow paths $P$ from $v$ to $x\in R$ which have an edge labelled by $x$. This causes a problem in the proof of Lemma~\ref{MengerLemma} since there we want to construct paths from $v$ through a vertex $r$ with no edges labelled by $x$. However it is not hard to overcome this issue and prove a version of Theorem~\ref{TheoremDigraph} whilst working directly with weakly switching paths (for example by suitably changing the definition of ``reaching''.)

\subsection*{Acknowledgment}
The author would like to thank Janos Barat for introducing him to this problem as well as Ron Aharoni, Dennis Clemens, Julia Ehrenm\"uller, Peter Keevash, and  Tibor Szab\'o for various discussions related to it. Additionally, the author would like to thank a careful referee for many extremely helpful suggestions.

\bibliography{Rainbow}
\bibliographystyle{abbrv}
\end{document}